\documentclass[a4paper,11pt]{article}
\pdfoutput=1
\usepackage[utf8]{inputenc}
\usepackage{geometry}
\geometry{a4paper,left=25mm,right=25mm,top=3cm,bottom=4cm}
\usepackage{graphicx}
\usepackage{amsmath}
\usepackage{amsfonts}
\usepackage{amssymb}
\usepackage{amsthm}
\usepackage{bbold}
\usepackage{subfig}
\usepackage{enumerate}
\usepackage{theoremref}

\usepackage{tikz}
\usetikzlibrary{arrows,calc,decorations.pathmorphing,backgrounds,positioning,fit,petri}

\newcommand{\bitem}{\begin{itemize}}
\newcommand{\eitem}{\end{itemize}}
\newcommand{\mc}[1]{\mathcal{#1}}

\newcommand{\N}{\mathbb{N}}
\newcommand{\R}{\mathbb{R}}

\newcommand{\bpm}{\begin{pmatrix}}
\newcommand{\epm}{\end{pmatrix}}
\newcommand{\bsm}{\left(\begin{smallmatrix}}
\newcommand{\esm}{\end{smallmatrix}\right)}
\newcommand{\T}{\top}
\newcommand{\ol}[1]{\overline{#1}}

\newcommand{\veps}{\varepsilon}

\DeclareMathOperator{\tr}{tr}
\DeclareMathOperator{\id}{id}

\DeclareMathOperator{\ddiv}{div}

\DeclareMathOperator{\Proj}{Proj}

\theoremstyle{plain}
\newtheorem{theorem}{Theorem}[section]
\newtheorem{lemma}[theorem]{Lemma}
\newtheorem{proposition}[theorem]{Proposition}

\theoremstyle{definition}
\newtheorem{definition}[theorem]{Definition}

\theoremstyle{remark}
\newtheorem{remark}[theorem]{Remark}

\DeclareMathOperator{\spt}{spt}
\newcommand{\interior}{\textnormal{int}\,}
\newcommand{\dens}{\,\textnormal{dens}}
\newcommand{\indicator}{\textnormal{ind}}
\newcommand{\region}{\Omega}
\newcommand{\testFunction}{\mc{D}}
\newcommand{\project}{\textnormal{pr}}
\newcommand{\const}{\textnormal{const}}

\newcommand{\Meas}{\mc{P}}
\newcommand{\MeasGen}{\textnormal{Prob}}
\newcommand{\WD}{W}
\newcommand{\coupling}{\Pi}
\newcommand{\SMeas}{\mc{SP}}

\newcommand{\Tan}{\textnormal{Tan}}
\newcommand{\STan}{\textnormal{STan}}

\newcommand{\liftPoint}{F}
\newcommand{\liftPointSobolev}{\tilde{F}}
\newcommand{\liftPointEquiv}{F_\sfold}
\newcommand{\deliftPoint}{\gamma_n}
\newcommand{\deliftPointSobolev}{\tilde{\gamma}_n}
\newcommand{\liftTangent}{f}
\newcommand{\liftTangentEquiv}{f_\sfold}

\newcommand{\emb}{\textnormal{Emb}}
\newcommand{\sfold}{\textnormal{B}}
\newcommand{\diff}{\textnormal{Diff}}

\newcounter{ProofPointCounter}

\numberwithin{equation}{section}

\title{Contour Manifolds and Optimal Transport}
\author{Bernhard Schmitzer \qquad Christoph Schn\"orr\\Image \& Pattern Analysis Group, Heidelberg University}

\date{\today}

\begin{document}
\maketitle

\begin{abstract}
	\noindent Describing shapes by suitable measures in object segmentation, as proposed in \cite{SchmitzerSchnoerr-EMMCVPR2013}, allows to combine the advantages of the representations as parametrized contours and indicator functions.
	The pseudo-Riemannian structure of optimal transport can be used to model shapes in ways similar as with contours, while the Kantorovich functional enables the application of convex optimization methods for global optimality of the segmentation functional.
	
	In this paper we provide a mathematical study of the shape measure representation and its relation to the contour description. In particular we show that the pseudo-Riemannian structure of optimal transport, when restricted to the set of shape measures, yields a manifold which is diffeomorphic to the manifold of closed contours. A discussion of the metric induced by optimal transport and the corresponding geodesic equation is given.
\end{abstract}

\tableofcontents

\section{Introduction}
	\subsection{Motivation}
		\label{sec:Motivation}
		Shape is a very general concept in image processing and computer vision.
		It manifests itself in a wide variety of representations, for example:
		point clouds \cite{MemoliGHPointCloud2005}, level sets, a wide range of signatures (e.g.~the spectrum of the Laplacian \cite{ReuterLaplaceSpectrum2006} or distributions of path lengths \cite{GeodesicShapeMassTransport-10}), measures on metric spaces \cite{memoli-gromov-shape-11}, parametrized contours \cite{PR-Kernel-Shape-03,Charpiat2007} or indicator functions \cite{CremersKlodtICCV11Moments,SchmitzerSchnoerr-SSVM2011}.
		The choice of the representation depends strongly on the application in mind.
		Typical tasks in connection with shape analysis are
		\begin{itemize}
			\item measuring similarity between shapes, i.e.~finding a metric on shapes, applicable for classification and recognition,
			\item computing meaningful registrations between similar shapes,
			\item statistical analysis and modelling of distributions of shapes,
			\item optimizing w.r.t.~exterior criteria, be it technical specifications in product design or local appearance features for object localization and pose estimation in image data,
			\item the abstraction from geometric transformations, such as Euclidean isometries in the ambient space or non-isometric changes in the pose of articulated objects, that a shape can undergo while still perceptually remaining the same shape.
		\end{itemize}
		Each of the named representations has its strengths and weaknesses among that list. In this paper we study a representation of shape from the viewpoint of variational image segmentation, that removes the competition between these points.
		
		The problem of segmenting a given image into a set of predefined classes (e.g. fore- and background) is a prototypical problem that calls for the application of shape priors.
		In a variational approach there are usually two parts in the functional: for a given segmentation candidate, one part is concerned with estimating how well the local appearance features of the image match the assigned class. The other part, the shape prior, rates the plausibility of the segmentation regions according to a shape model. The prior must distinguish familiar from unfamiliar shapes and estimate the likelihood of a given input (usually based on a set of training shapes).
		Often location, orientation or even pose of the sought-after object are unknown, so it is critical that the shape model can deal with these degrees of freedom.
		Hence, this problem indeed involves all of the tasks listed above and thus the question which representation to choose is a difficult one. In fact, many different approaches have been tried (cf.~Section \ref{sec:RelatedLiterature}).
		
		Two common representations for this problem are \emph{indicator functions} and \emph{parametrized contours}. Mathematically they are equivalent, as one representation can be converted into the other. But practically they are somewhat complimentary.
		While indicator functions tend to be well suited for optimization w.r.t.~local image features and local boundary regularity, it is rather hard to formulate non-local functionals such as shape priors.
		Conversely, parametrized contours have been used for sophisticated statistical shape modelling. However, optimization w.r.t.~local image features can usually only be performed through small deformations, yielding non-convex models that are prone to get stuck in suboptimal local minima.
		
		Recently, the pseudo-Riemannian structure on Wasserstein spaces was proposed to be used for analysis of measures with characteristic spatial distributions \cite{OptimalTransportTangent2012}. In \cite{SchmitzerSchnoerr-EMMCVPR2013} this approach was extended to be suitable for modelling shapes on indicator functions in a way that is in style reminiscent of the way shape priors have been modelled on parametrized contours, but at the same time being better suited for evaluation of appearance functionals, hence leading to an overall functional that could be optimized to global optimality.
		
		In this paper we shed more light on what `reminiscent in style' means and establish a precise mathematical relation between the shape representation used in \cite{SchmitzerSchnoerr-EMMCVPR2013} and contour manifolds. As a result, we show that the complimentary aspects of both shape representations can be removed.
		
	\subsection{Contribution and Organization of the Paper}
		\begin{figure}
			\centering%
			\includegraphics[width=7cm]{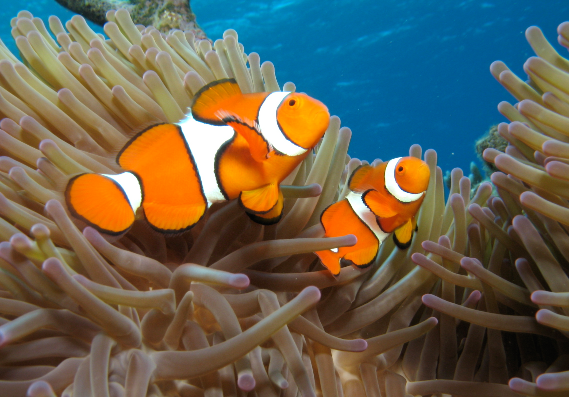}%
			\hfill
			\includegraphics[width=7cm]{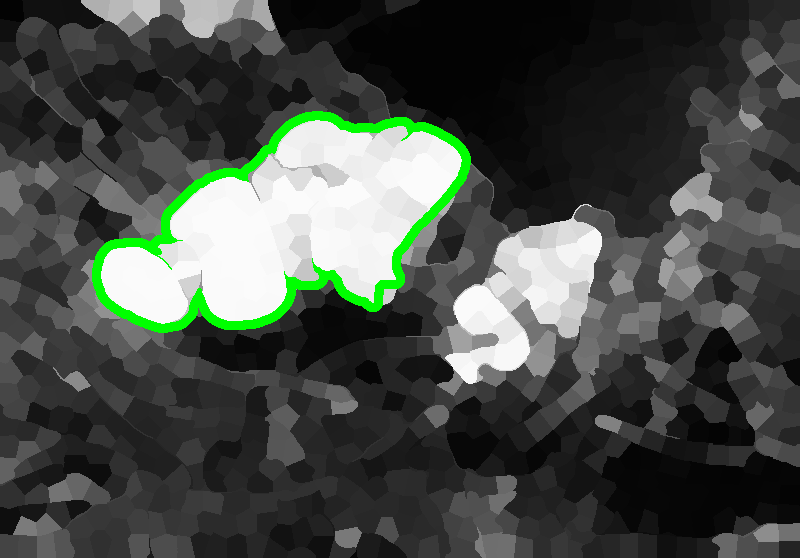}%
			\caption{Globally optimal object segmentation with shape measures, based on approach presented in \cite{SchmitzerSchnoerr-EMMCVPR2013}. Right: original image, left: gray values indicate local foreground affinity (white: foreground) based on a rudimentary color model. Green outline indicates segmentation with shape prior based on shape measures. It was specifically searched for the larger of the two fish.
			An approach with a shape prior based on parametrized contours might get stuck on the smaller of the two fish, if initialized poorly.
			By contrast, the approach presented in \cite{SchmitzerSchnoerr-EMMCVPR2013}, based on shape measures, yields a \emph{globally optimal segmentation}, independent of initialization.}
			\label{fig:WassersteinModes}
		\end{figure}
		
		In Section \ref{sec:RelatedLiterature} we will go through related literature and some notational conventions are established in Section \ref{sec:Notation}.
		Then, throughout Section \ref{sec:Background} we will gather the necessary mathematical background that this article builds on. We will touch upon contour manifolds, flows and their induced diffeomorphisms, optimal transport, in particular the Riemannian structure of Wasserstein spaces and some elementary facts on Poisson's equation.
		
		In Sections \ref{sec:MathContrib} and \ref{sec:RiemannianMetric} we will then present the main part of our contributions.
		\begin{enumerate}[(i)]
			\item We start in \ref{sec:MathContrib} by viewing the set of shape measures whose densities are scaled indicator functions informally as a submanifold of the set of all measures, as introduced in Sect.~\ref{sec:BackgroundWasserstein}, and give the corresponding subspace of flow-fields that are tangent to this submanifold. \label{item:ContribShapeMeasures}
			\item Then we show that there is a lifting from the manifold of contours modulo parametrization onto the shape measures. Correspondingly there is a lifting from the tangent space at a given contour to the tangent space at the corresponding shape measure. These liftings are bijective and consistent in the sense that the operations `lifting' and `taking the tangent' commute (see Fig. \ref{fig:LiftingCommutation}). Conveniently the parametrization ambiguity of contours disappears in the measure representation and one need no longer handle equivalence classes of objects.
			\item It is then investigated how paths on the contour manifold translate into paths on the set of shape measures and vice versa.
			\item Finally we establish that, when equipped with the appropriate topology, the set of shape measures is indeed a manifold, diffeomorphic to the manifold of contours modulo parametrization. \label{item:ContribManifold}
			\item In Section \ref{sec:RiemannianMetric} we equip the manifold of shape measures with the Riemannian inner product implied by optimal transport. This yields a new type of metric on the manifold of contours, beyond the Sobolev-type metrics discussed in the literature. We study the resulting metric structure of the tangent space.
			\item Eventually, as an instructive excursion beyond the mathematically rigorous core of the paper, we discuss a candidate for the geodesic equation on the manifold of shape measures and investigate some particular solutions as well as numerical approximations.
		\end{enumerate}
		The quintessence of items (\ref{item:ContribShapeMeasures}) to (\ref{item:ContribManifold}) is to demonstrate, that the measure representation is in fact equivalent to the contour representation in terms of shape modelling and even comes without the parametrization ambiguity. At the same time, as we  will elaborate further in Sect.~\ref{sec:RelatedLiterature}, it can more easily be combined with functionals rating the agreement with local appearance information, thus making it an adequate choice for problems such as shape prior assisted image segmentation (cf.~Fig.~\ref{fig:WassersteinModes}).
		We conclude in Sect.~\ref{sec:Conclusion}.

		\begin{figure}[hbt]
			\centering
			\newlength{\figwidthB}%
			\setlength{\figwidthB}{3.5cm}%
			\begin{tikzpicture}[img/.style={anchor=center, inner sep=0}]
				\node[img] (n1) at (0,0) {\includegraphics[width=\figwidthB]{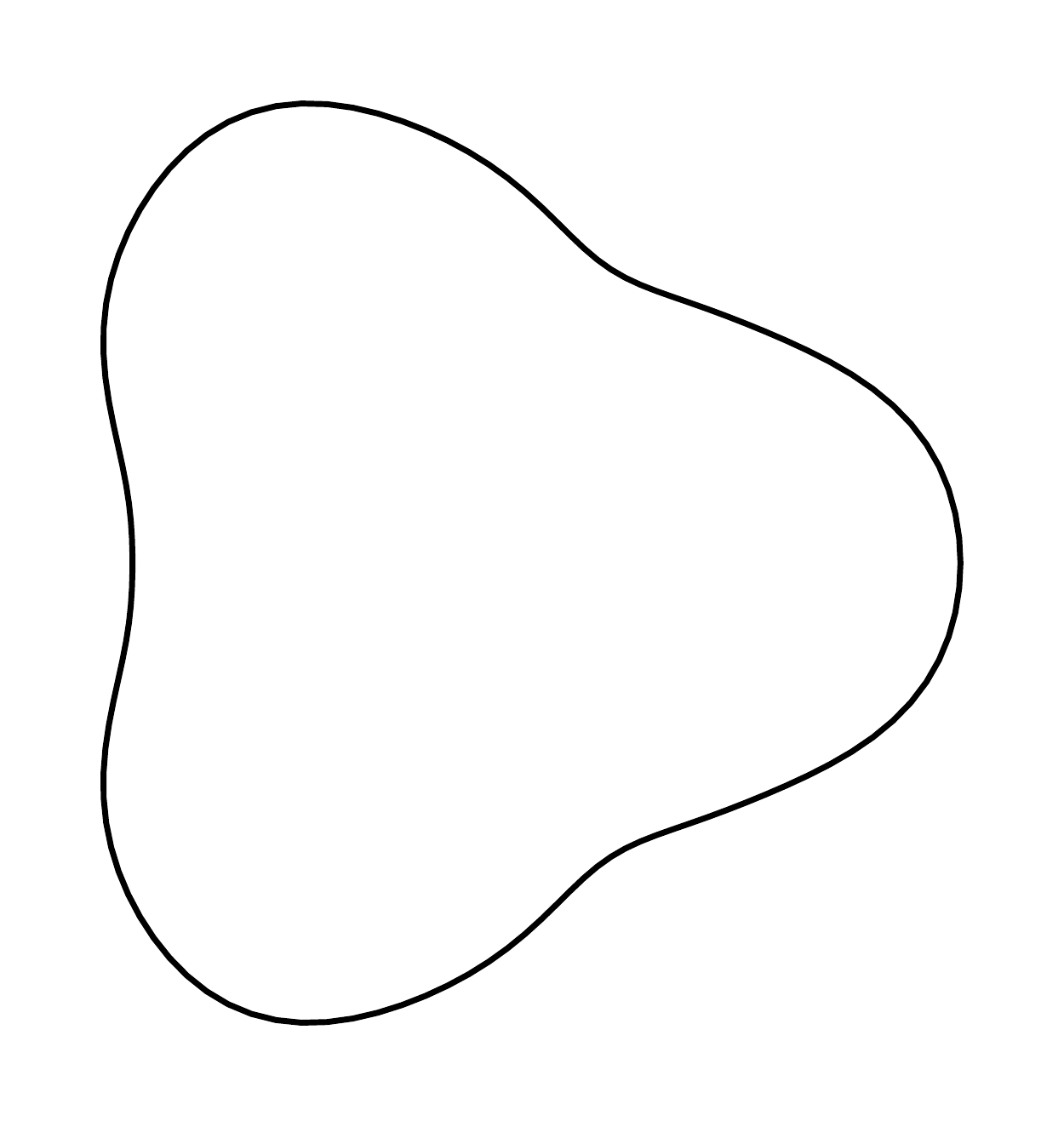}};
				\node[img] (n2) at (2*\figwidthB,0) {\includegraphics[width=\figwidthB]{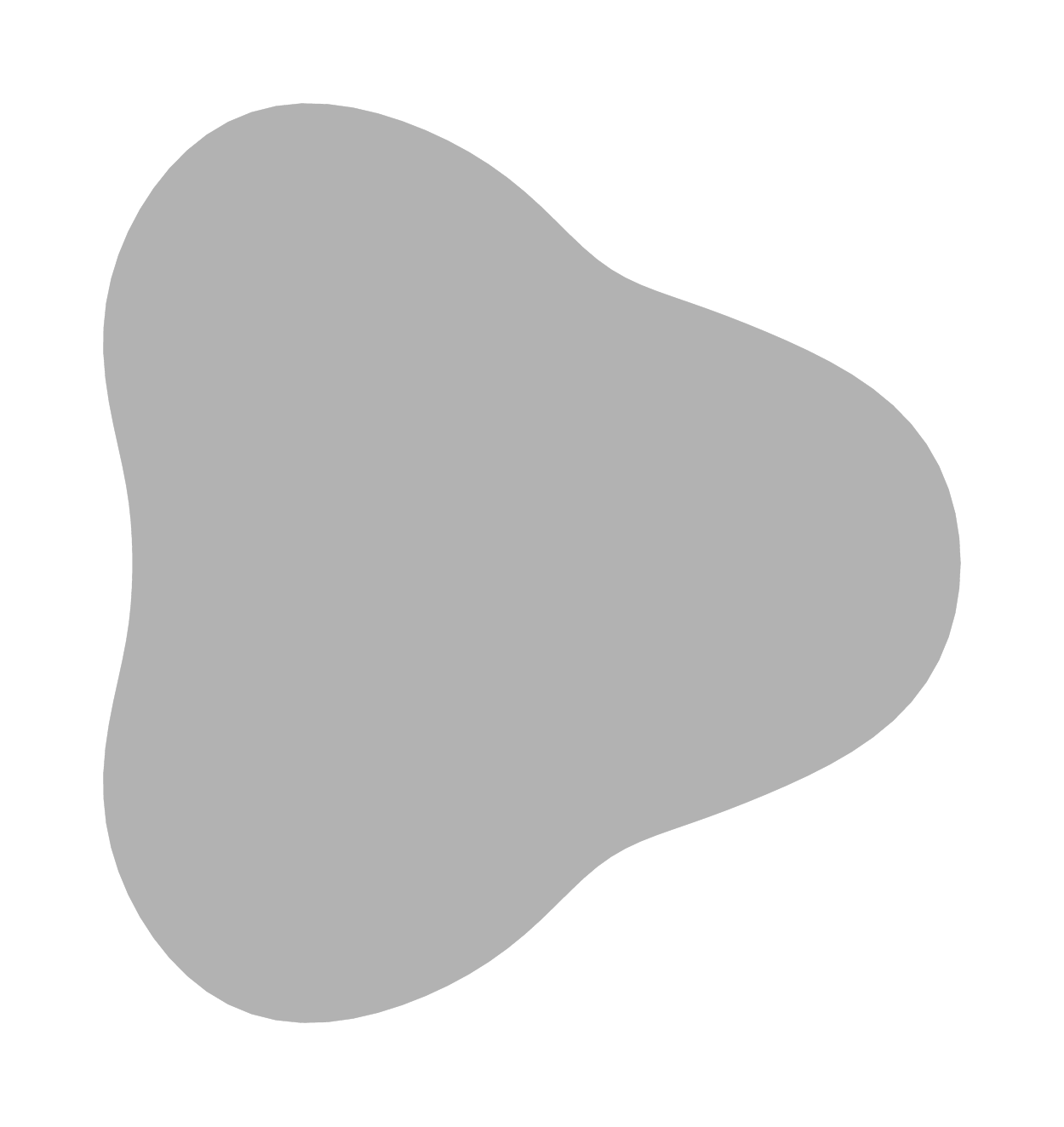}};
				\node[img] (n3) at (0,-1.3*\figwidthB) {\includegraphics[width=\figwidthB]{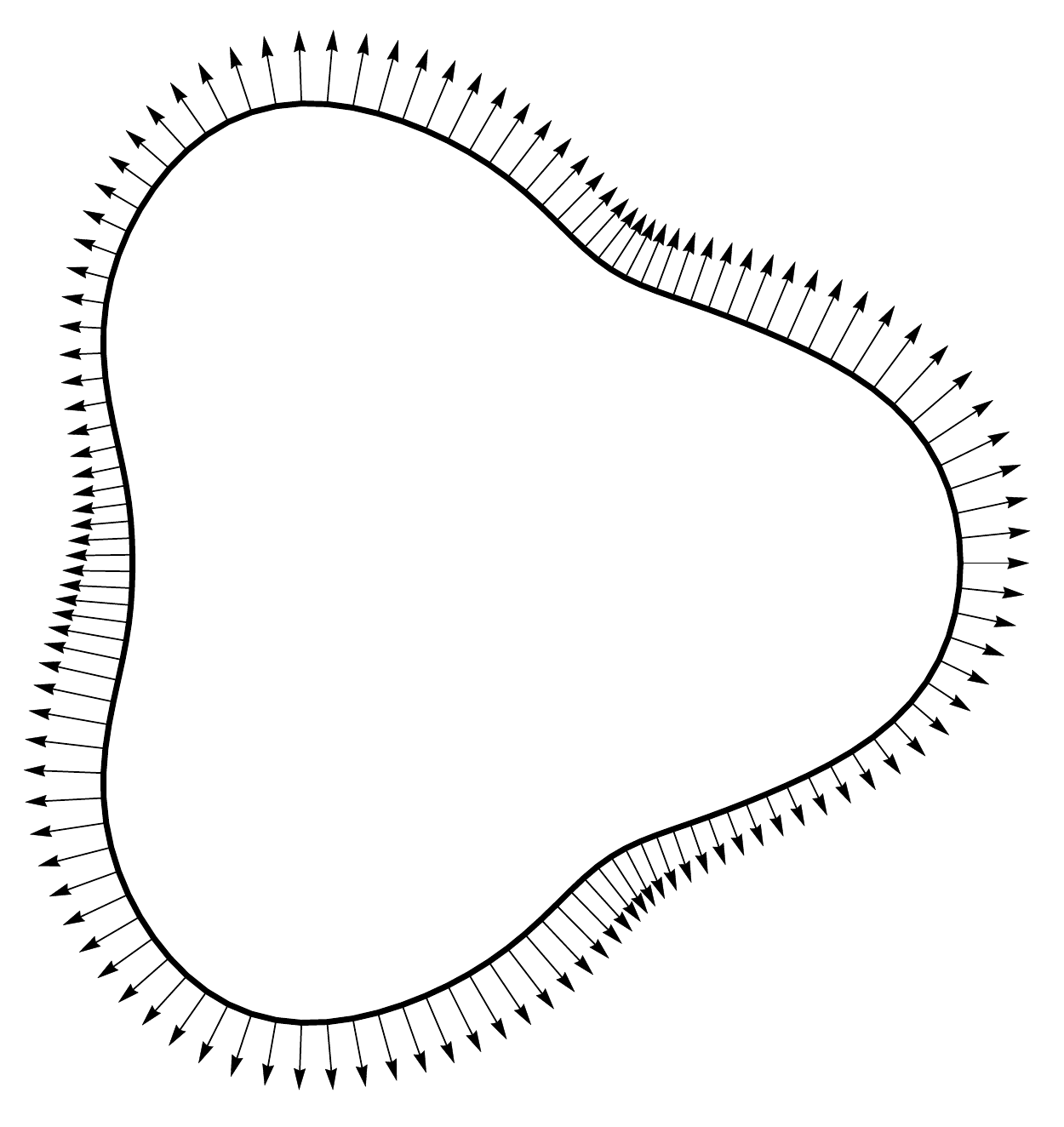}};
				\node[img] (n4) at (2*\figwidthB,-1.3*\figwidthB) {\includegraphics[width=\figwidthB]{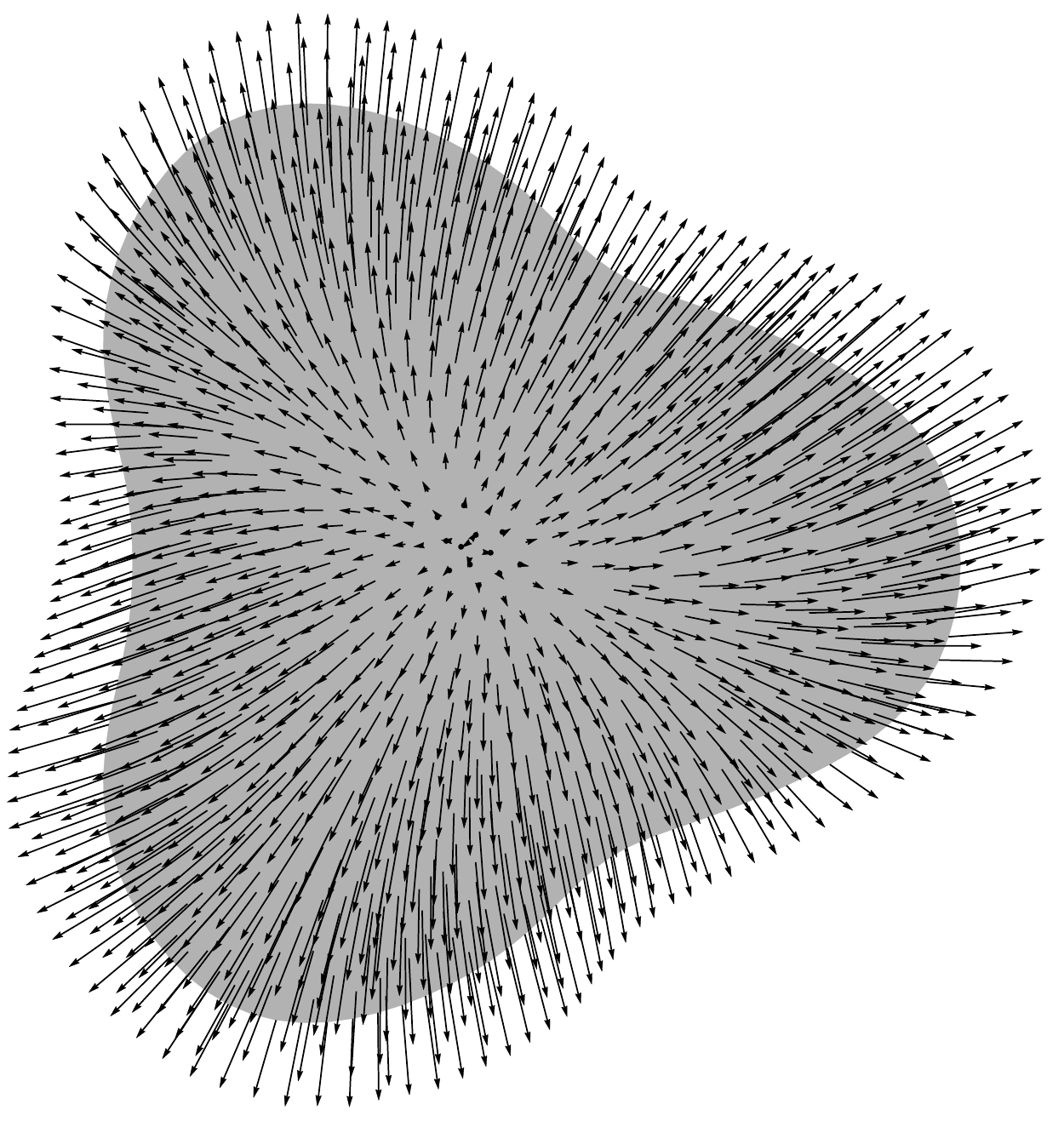}};
				\draw[->] (n1) -- node[above]{$\liftPoint$} (n2);
				\draw[->] (n1) -- node[left]{$\displaystyle \frac{d}{dt}$} (n3);
				\draw[->] (n2) -- node[right]{$\displaystyle \frac{d}{dt}$}(n4);
				\draw[->] (n3) -- node[above]{$\liftTangent$} (n4);
			\end{tikzpicture}
			\caption{Shapes can be represented by parametrized contours (top left) or probability measures with constant density in the interior and zero elsewhere (top right). Let $\liftPoint$ be the map that takes contours to measures. Taking the time derivative of a path of contours will yield a normal deformation field (bottom left), on a path of measures it will yield a flow-field according to the continuity equation in optimal transport (bottom right). In this article we will discuss a map $f$ that takes contour deformation fields to measure deformation fields, such that the diagram above commutes (\protect{\thref{thm:LiftingCommutation}}).}
			\label{fig:LiftingCommutation}
		\end{figure}

	\subsection{Related Literature}
		\label{sec:RelatedLiterature}
		\paragraph{Indicator Functions.}
			Indicator functions are a common shape representation in convex variational image segmentation. Data terms, based on local image features, as well as boundary regularizers, such as total variation and its extensions can be conveniently formulated and the corresponding functionals can be solved exactly or in good approximation through convex relaxation onto a suitable space of real functions \cite{ContinuousGlobalBinary06,Pock-et-al-CVPR09,Lellmann-Schnoerr-SIIMS-11}.
			Attempts to formulate shape priors in this representation are however often rather simplistic or lack important features, such as geometric invariance \cite{CremersKlodtICCV11Moments,SchmitzerSchnoerr-SSVM2011}.
			This may be owed to the fact that the linear structure provided by the vector space of real functions is not quite suitable for describing shapes: for example the linear interpolation between two shapes is nowhere a shape itself (see Fig.~\ref{fig:LinearStructures}). Also, $L^p$-type metrics are no good measures of shape similarity: they only measure the area of difference, regardless where the differing regions are.
			
		\paragraph{Contour Manifolds.}
			The set of parametrized contours can be treated as an infinite dimensional manifold \cite{MichorGlobalAnalysis}. This manifold, equipped with various metrics has been studied theoretically in great depth in the context of shape analysis \cite{Michor2006,PlaneCurveGeodesicExplicit-08,sunmensoa10}.
			Local approximation by its tangent space yields a natural linear structure for applying machine learning techniques for elaborate statistical modelling \cite{PR-Kernel-Shape-03,Charpiat2007}.
			But this representation, too, has its disadvantages: the parametrization ambiguity, while eliminated in theory by resorting to a suitable quotient manifold, remains a practical problem in implementations (see Fig.~\ref{fig:LinearStructures}). In particular, it is more complicated as with indicator functions to evaluate and optimize the contour w.r.t.~local image features. Usually, internally the contour has to be converted into the corresponding region representation to evaluate the functional and optimization is only performed w.r.t.~small local updates, resulting in non-convex models that are prone to get stuck in suboptimal local minima (e.g.~\cite{PR-Kernel-Shape-03}).
			
			The difference in the natural linear structures on parametrized contours and indicator functions is illustrated in Figure \ref{fig:LinearStructures}.
		\begin{figure}[hbt]
			\centering%
			\newlength{\figwidthA}%
			\setlength{\figwidthA}{2.7cm}%
			\begin{tabular}{ccccc}%
				\includegraphics[width=\figwidthA]{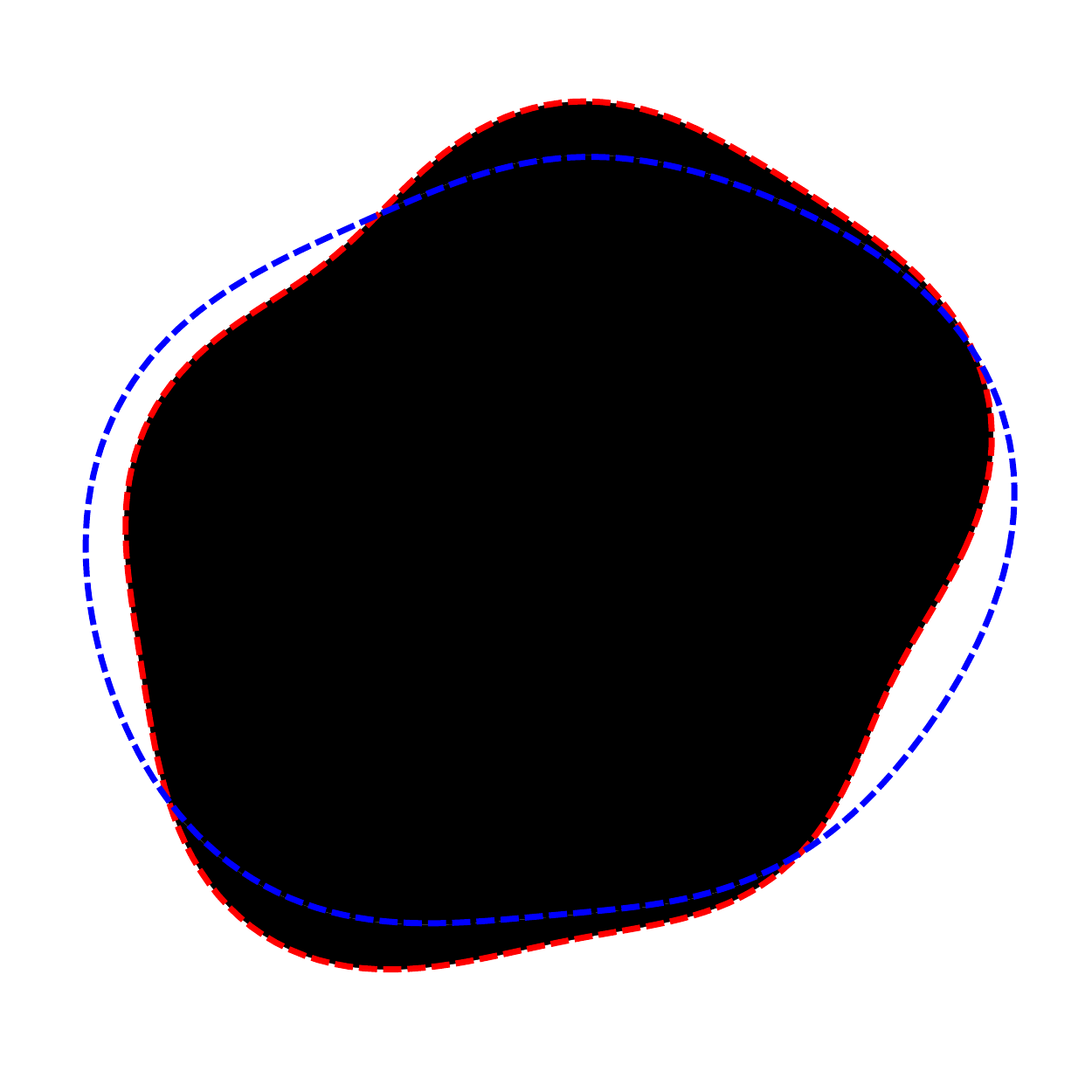} &%
				\includegraphics[width=\figwidthA]{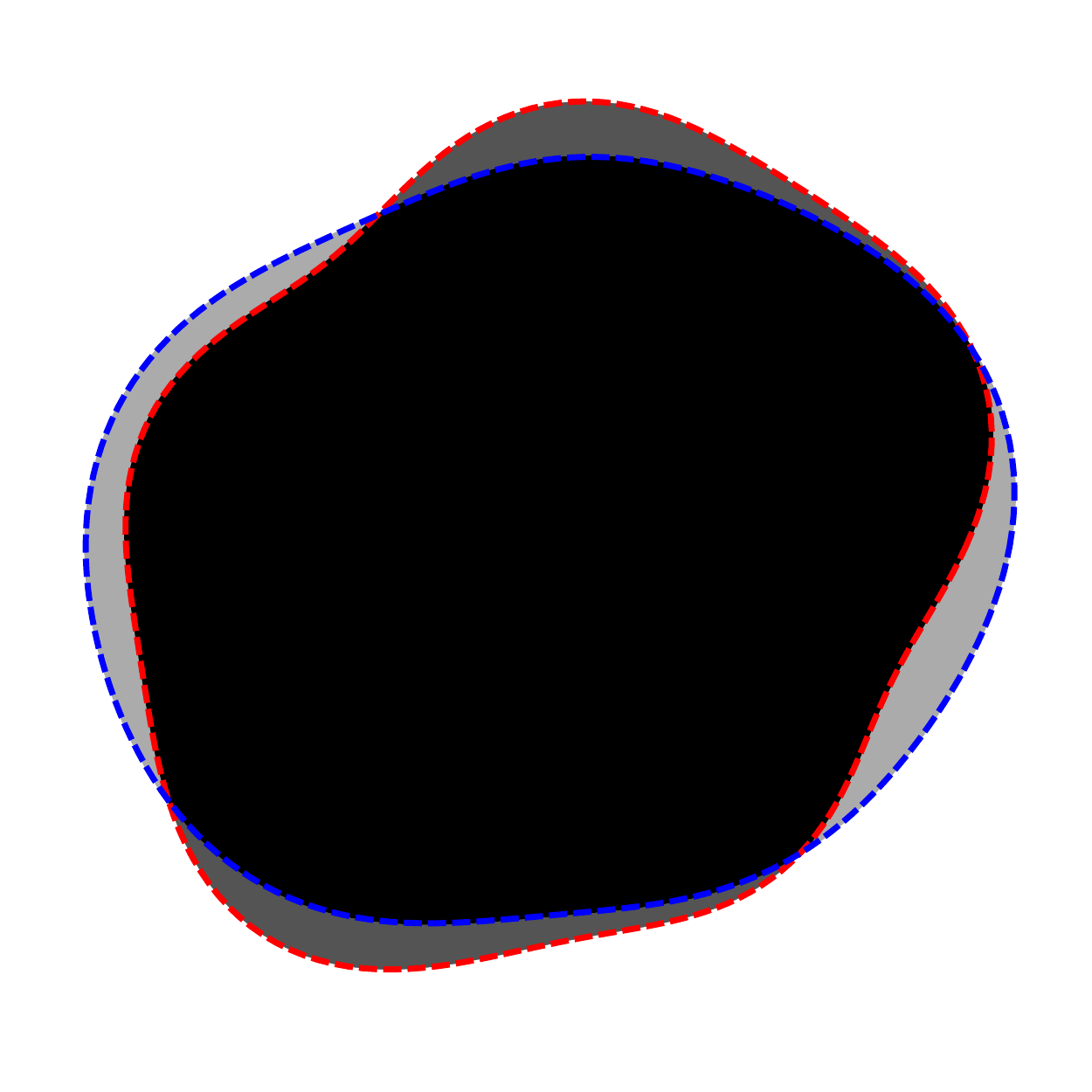} &%
				\includegraphics[width=\figwidthA]{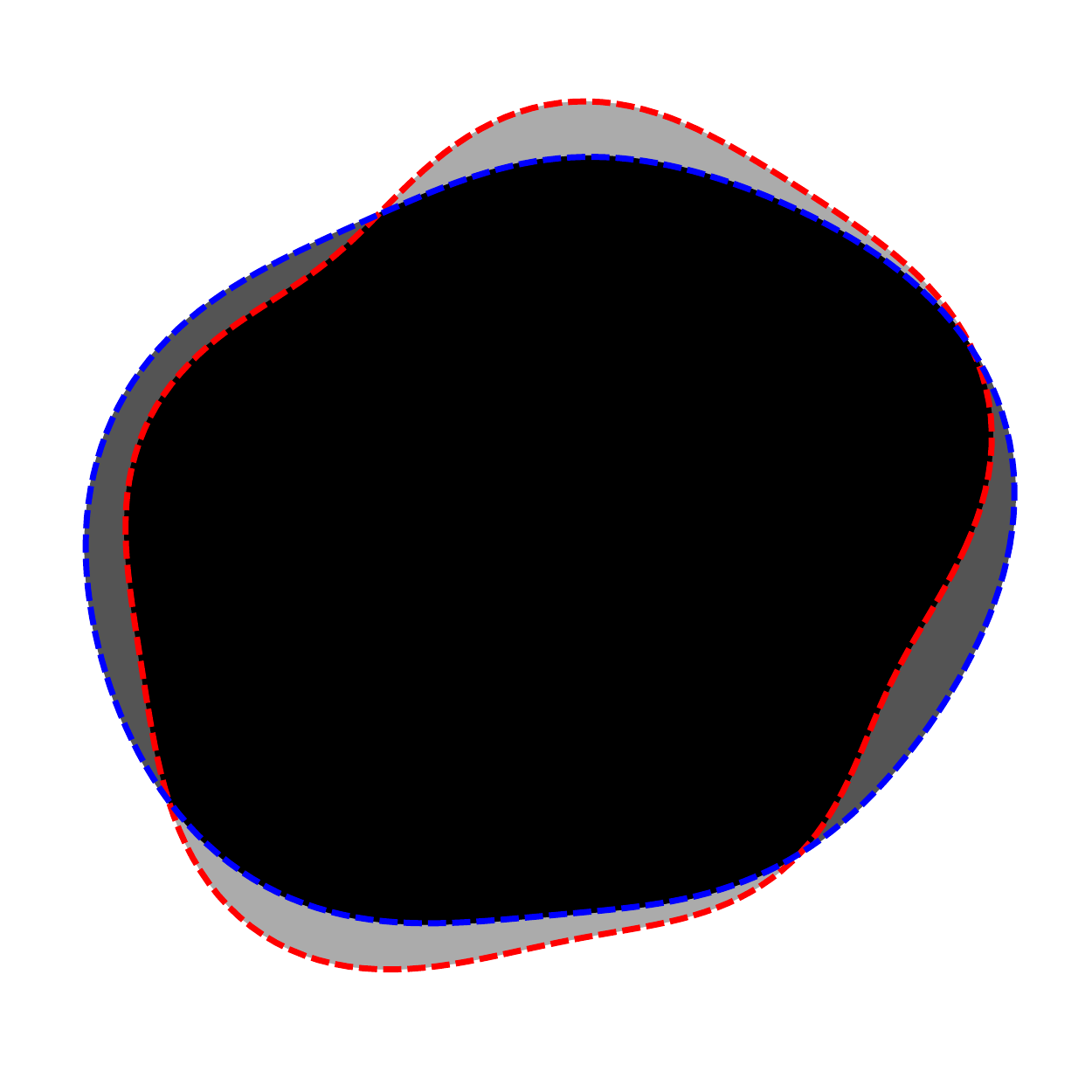} &%
				\includegraphics[width=\figwidthA]{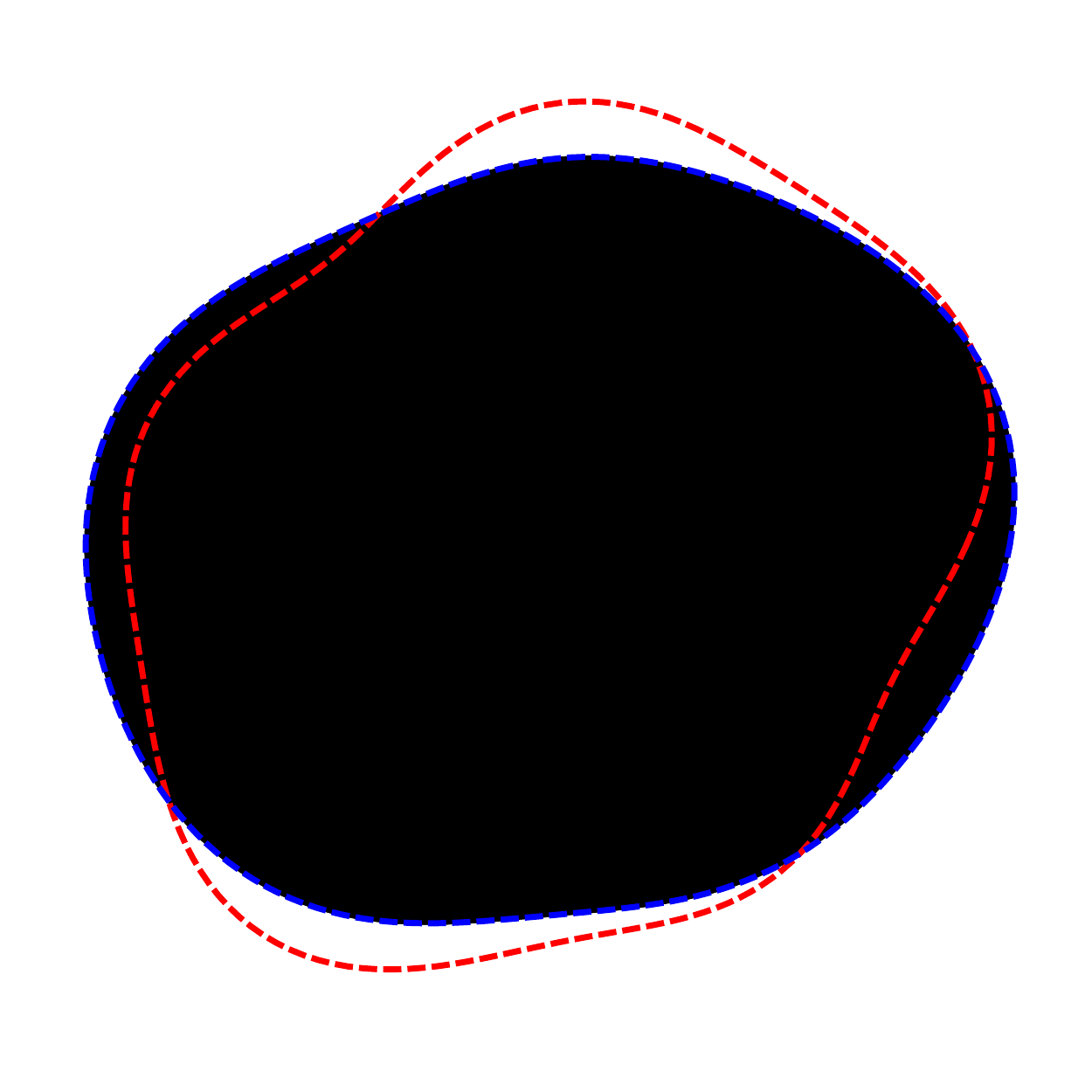} &%
				\includegraphics[width=\figwidthA]{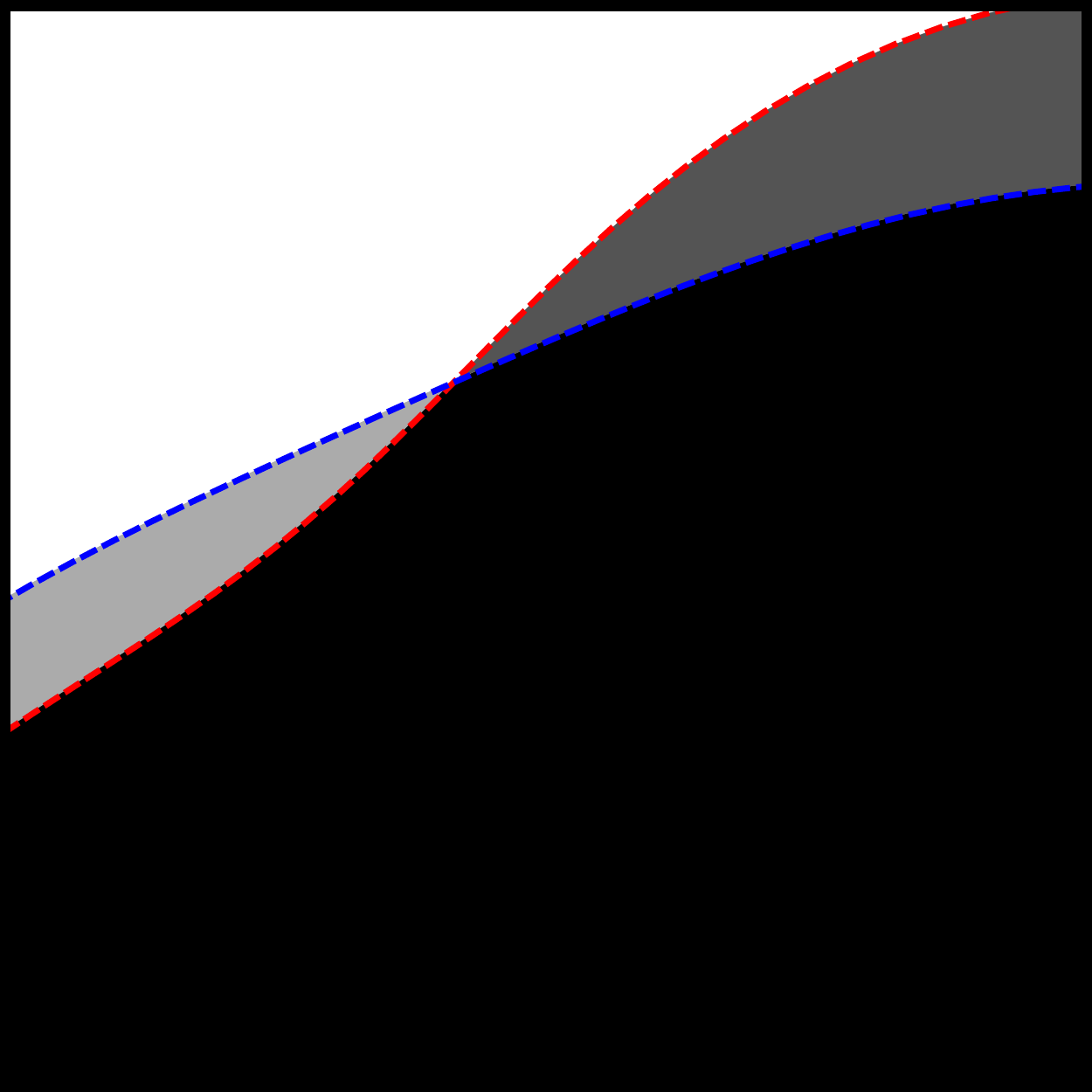} \\%
				\includegraphics[width=\figwidthA]{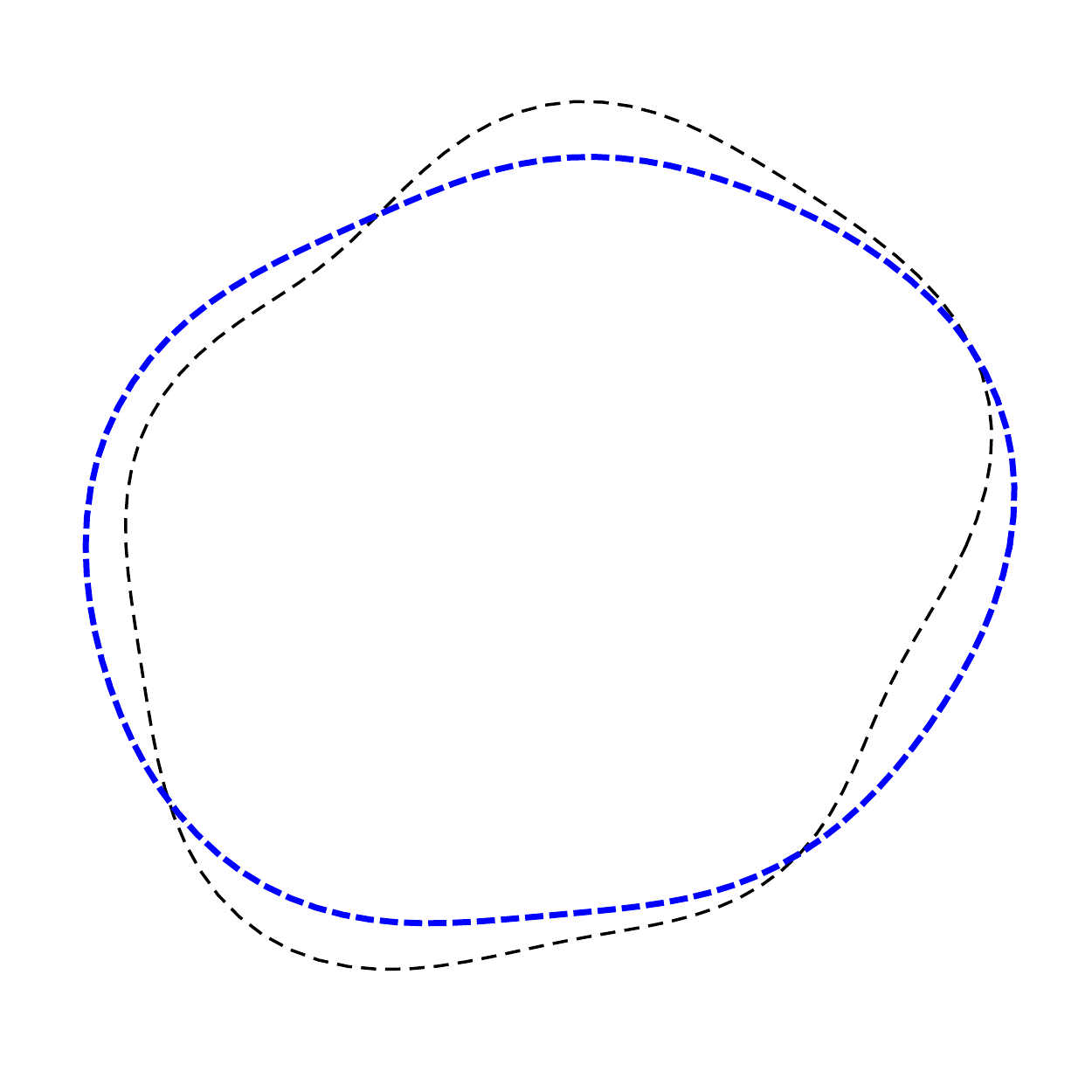} &%
				\includegraphics[width=\figwidthA]{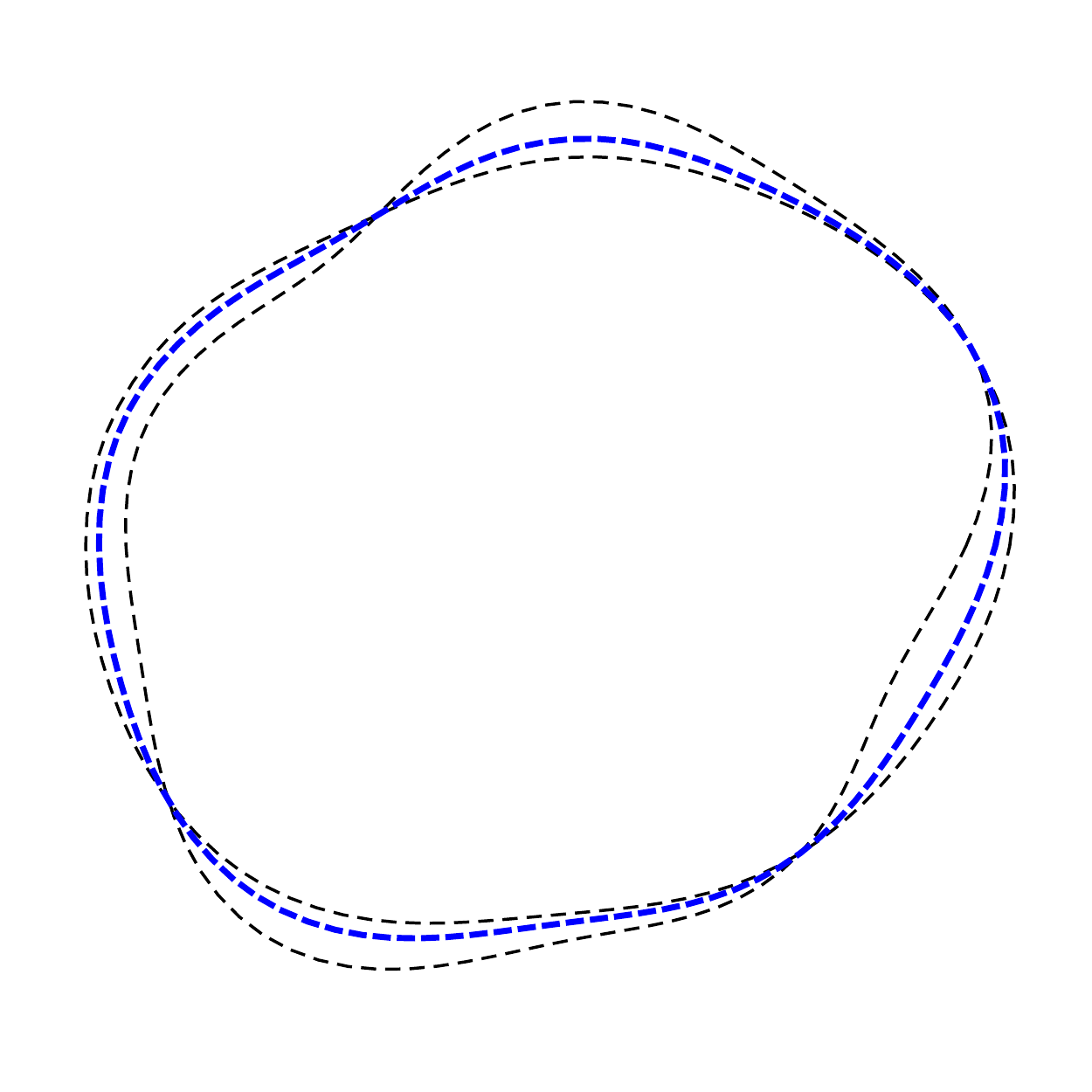} &%
				\includegraphics[width=\figwidthA]{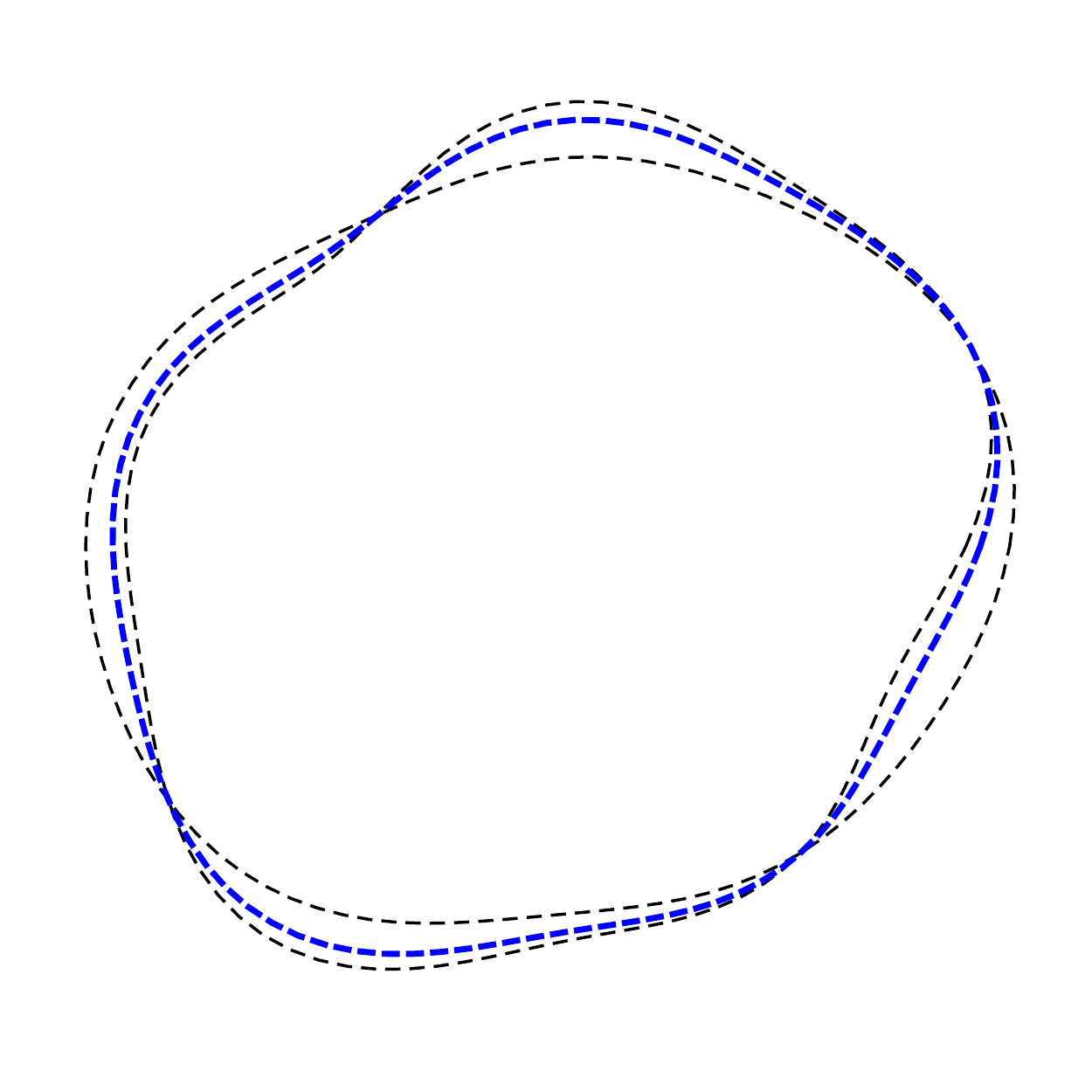} &%
				\includegraphics[width=\figwidthA]{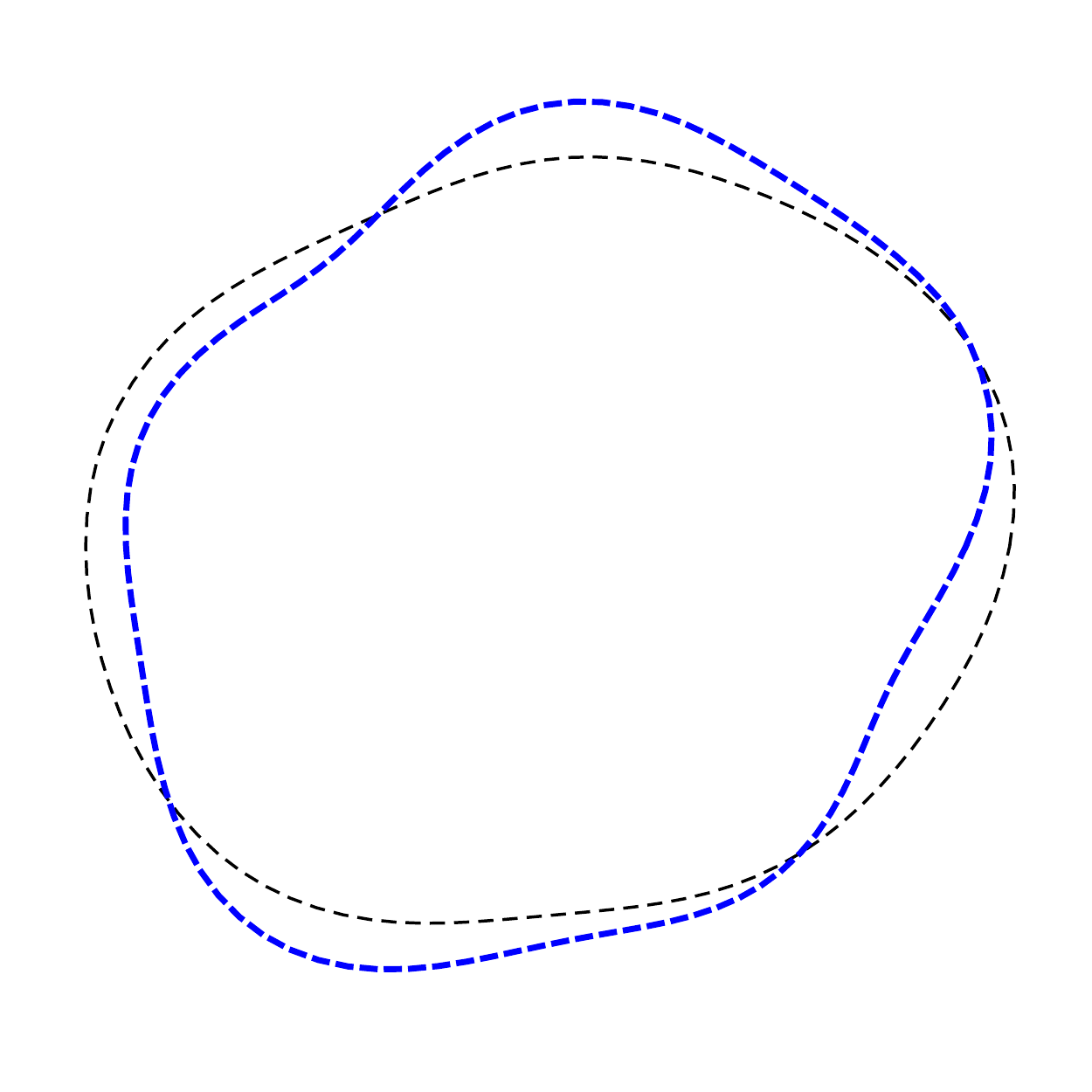} &%
				\includegraphics[width=\figwidthA]{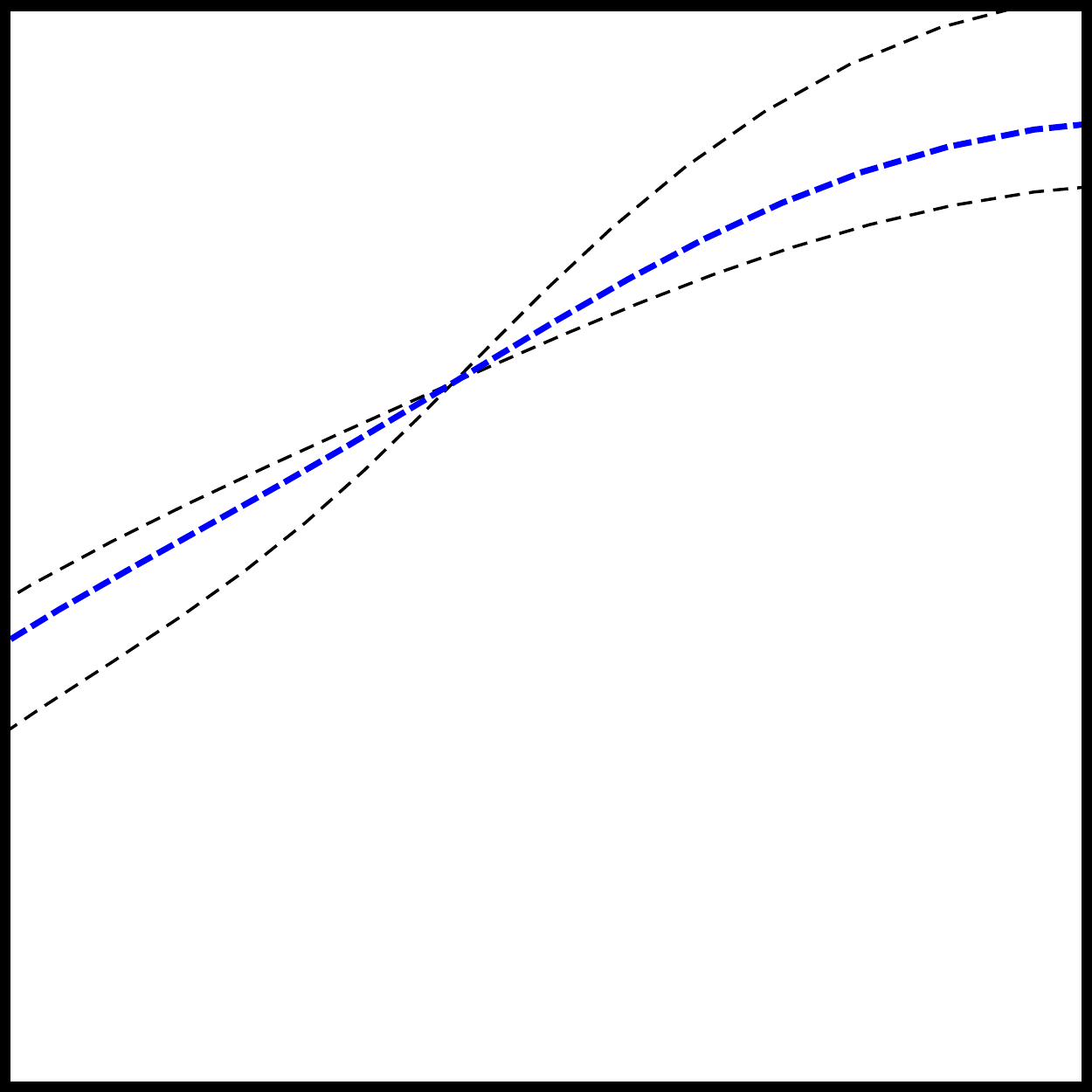} \\%
				\includegraphics[width=\figwidthA]{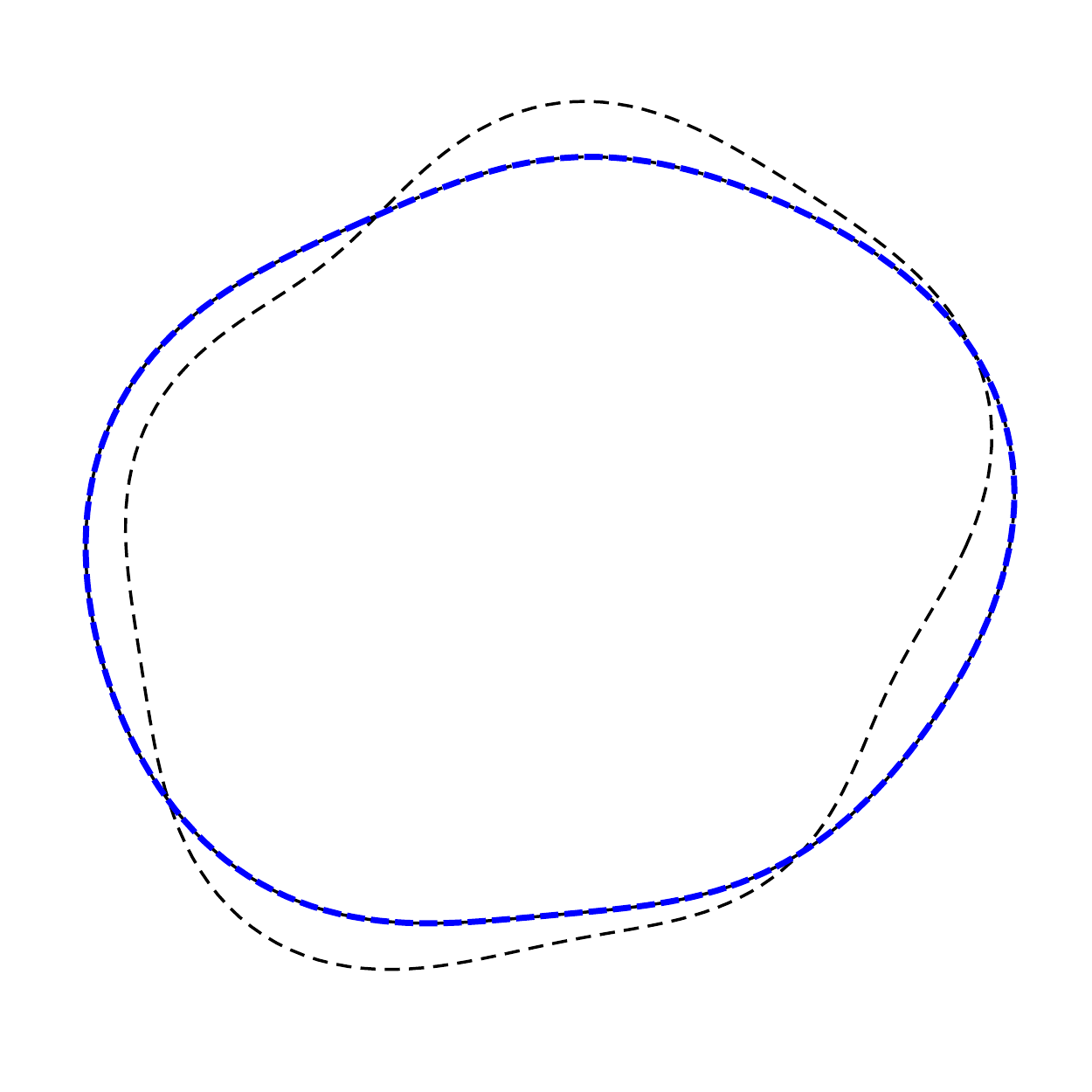} &%
				\includegraphics[width=\figwidthA]{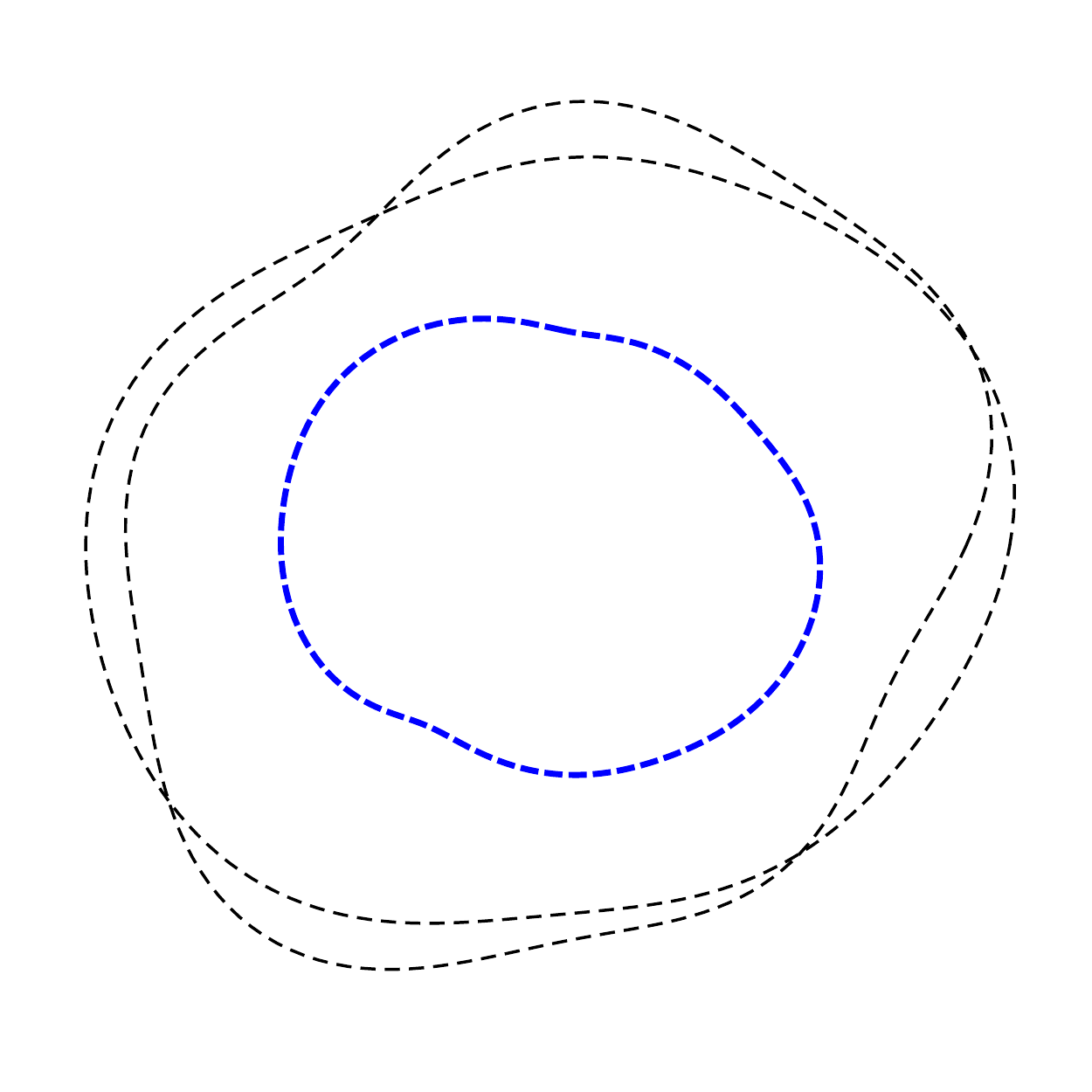} &%
				\includegraphics[width=\figwidthA]{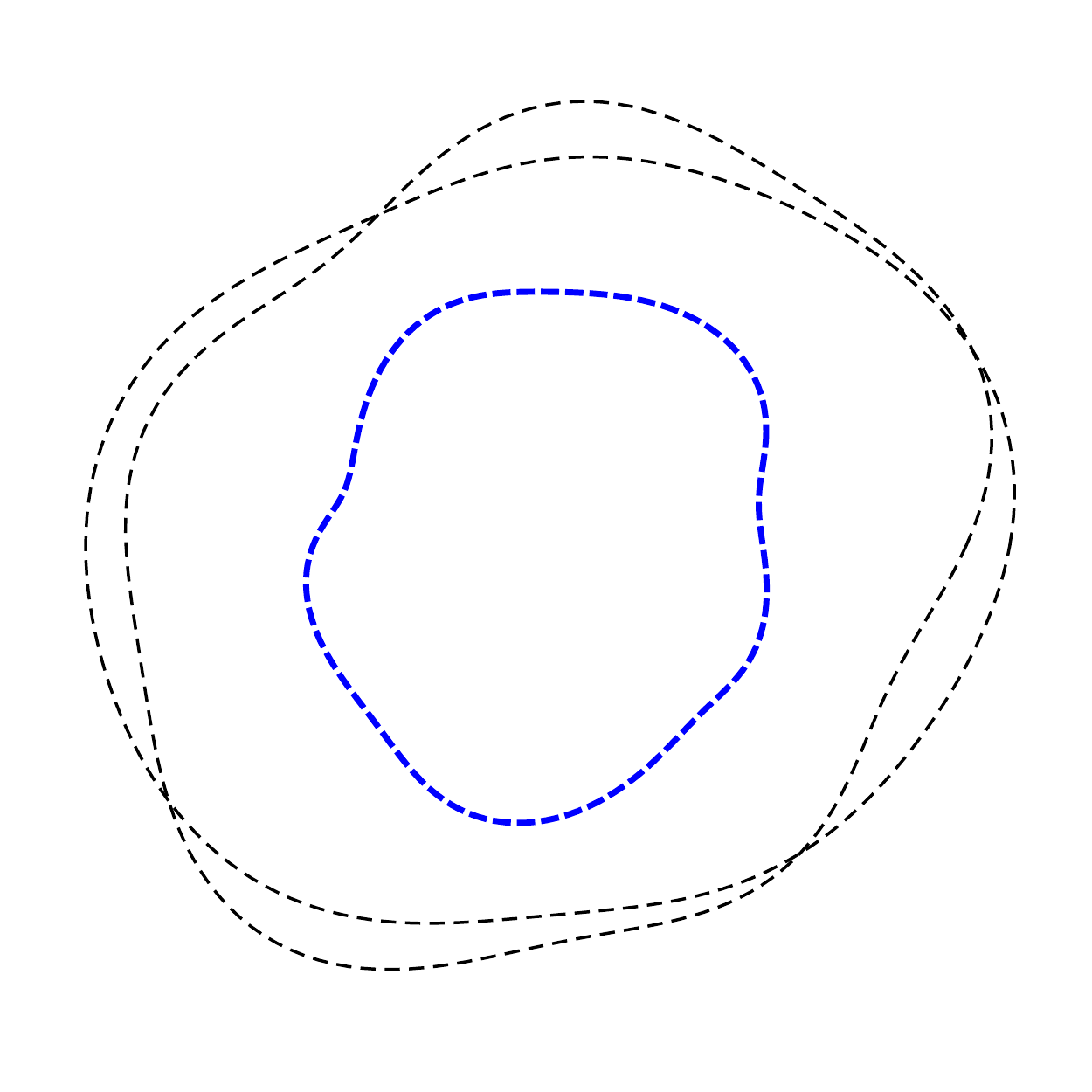} &%
				\includegraphics[width=\figwidthA]{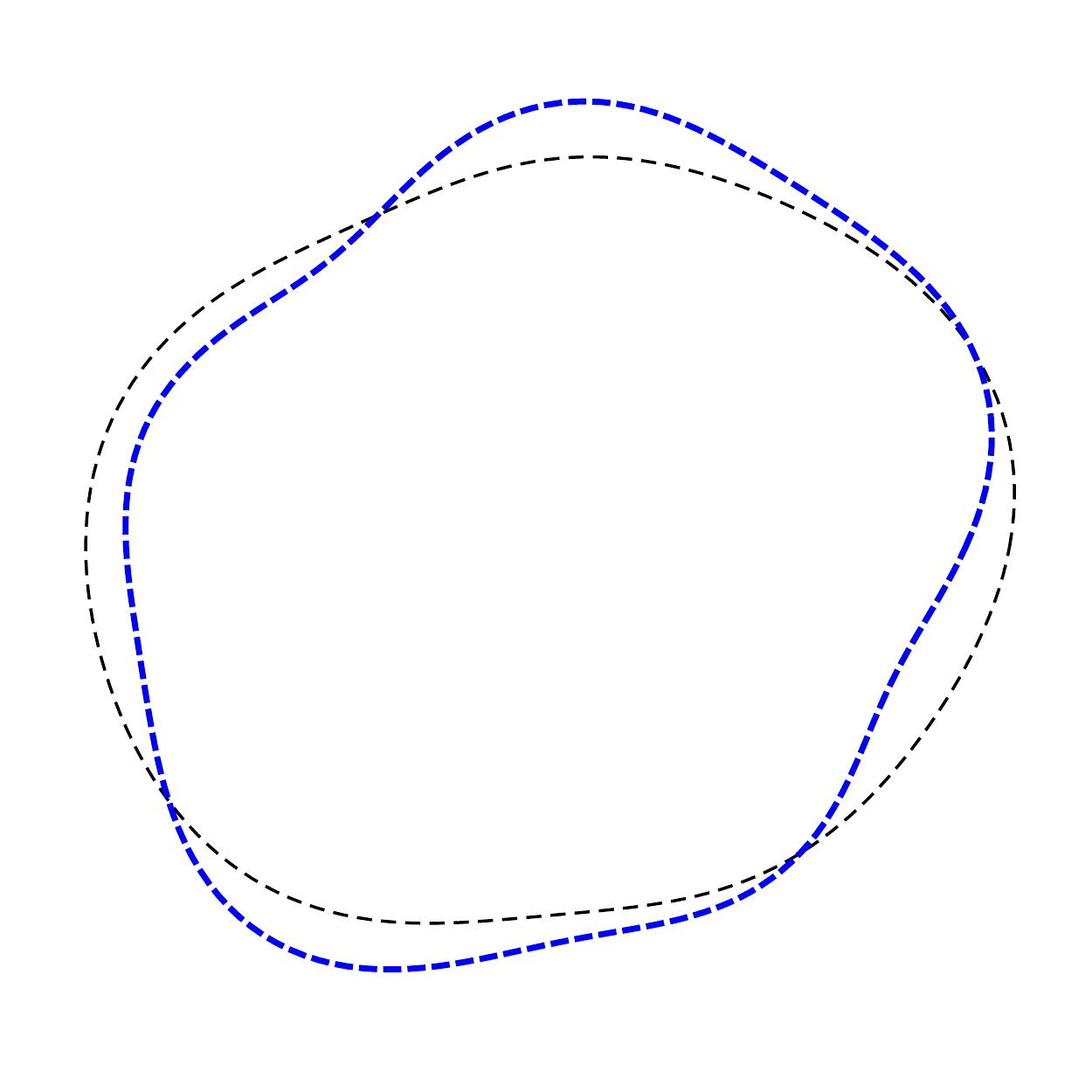} &%
				\includegraphics[width=\figwidthA]{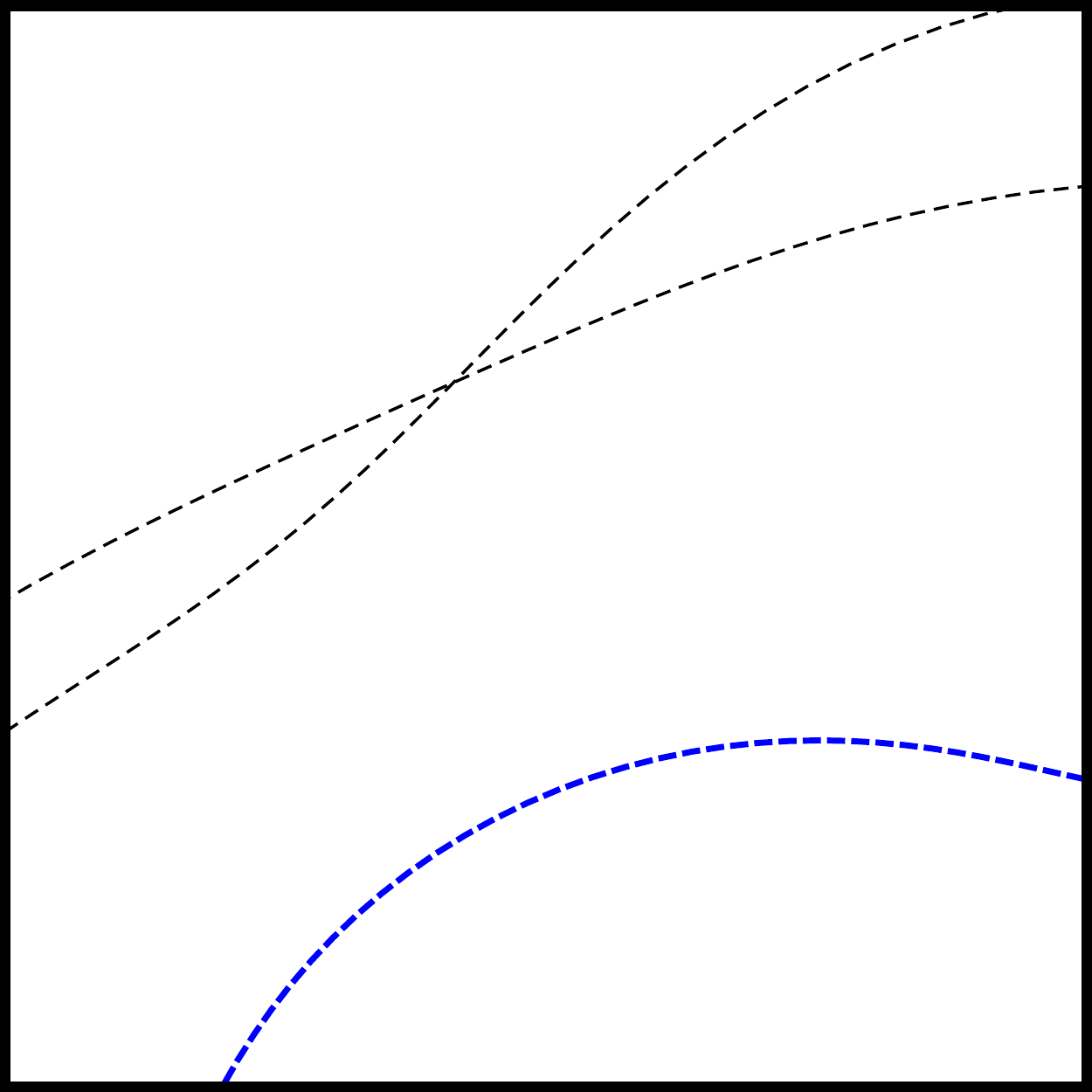} \\%
			\end{tabular}
			\caption{Linear structure on shape representations. From left to right: first four columns show a linear interpolation between two shapes in different representations. Fifth column shows a close-up of the second step for better visibility.
				First row: indicator functions (contours given for visual orientation). The intermediate objects are no indicator functions themselves.
				Second row: parametrized contours with compatible parametrizations. Blue contour describes a smooth interpolation between the two shapes given in black.
				Third row: similar contours with strongly differing parametrizations. The intermediate blue contours describe no meaningful interpolation.}
			\label{fig:LinearStructures}
		\end{figure}		
		
		\paragraph{Optimal Transport.}
			Optimal transport in general has become a popular tool in machine learning and image processing. For example by providing a metric on measures it can be used for classification in bag-of-feature representations \cite{Pele2009}, or one may extract object registrations from the optimal transport plan \cite{OptimalTransportWarping}.
			Wasserstein spaces have found to exhibit a structure akin to Riemannian manifolds \cite[Sect.~2.3.2]{Ambrosio2013}, which allows to interpret certain partial differential equations informally as gradient descents in these spaces \cite[Chap.~15]{Villani-OptimalTransport-09}.
			The `tangent space' of this `manifold' consists of flow-fields that describe small deformations of the footpoint measure via the continuity equation.
			Recently it has been shown that this space provides a meaningful basis for classifying measures with distinctive spatial structure \cite{OptimalTransportTangent2012}.
			In \cite{SchmitzerSchnoerr-EMMCVPR2013} this approach was extended: shapes were described by measures with constant density within their support and those flow-fields that are tangent to this subset of measures were determined.
			It was argued that this provides a shape representation which can unite the advantages of indicator function and the contour parametrization representation without suffering too much from the disadvantages: while the framework of optimal transport in its convex formulation by the Kantorovich functional provides the tool for local appearance feature matching, the tangent space provides a basis for statistical shape modelling.
			The joint functional can be optimized to global optimality by a hierarchy of adaptive convex relaxations.
		
		\paragraph{Flows and Diffeomorphisms.}
			Under sufficient regularity conditions the flow-fields that are tangent to paths of measures in the above sense can be integrated into diffeomorphisms that transform one measure into the other by push-forward.
			Flow-fields and their induced diffeomorphisms have been extensively studied in their own right, beyond the connection to optimal transport \cite{YounesShape2010,ZolesioShape2011}. These general results will help us prove our theorems.
			By choosing a metric on the space of flow-fields one induces a metric on the group of integrated diffeomorphisms by finding the `shortest' flow-field path between them. This, in turn, induces a metric on shapes, represented by open sets, by looking at the diffeomorphisms that transform one into the other.
			As pointed out, we seek to remain within the framework of optimal transport for its closeness to convex optimization methods. Hence, the corresponding marginal constraints impose restrictions on the Jacobian determinant of diffeomorphisms, similar to the constraint of volume preservation in fluid dynamics.
			In what follows, we will study this and other aspects in detail.

	\subsection{Notation}
		\label{sec:Notation}
		\paragraph{Calculus.}
		The space of test functions $\testFunction = C^\infty_0(\R^2)$ is the space of smooth real functions on $\R^2$ with compact support.
		For a multi-index $I = (i_1,i_2,\ldots,i_n)$ denote by $\partial_I \varphi = \partial_{i_1} \partial_{i_2} \ldots \partial_{i_n} \varphi$ the corresponding partial derivative and by $|I|=n$ its order.
		Given a differentiable map $\varphi : \R^2 \rightarrow \R^2$, we write $J_\varphi$ for the Jacobian matrix.
	
		\paragraph{Differential Geometry.}
		For a smooth manifold $M$ we denote by $TM$ its tangent bundle and for $x \in M$ by $T_xM$ the tangent space at $x$. Generally for any fiber bundle $\mc{F}M$ over $M$ we denote the fiber at $x$ by $\mc{F}_xM$. For a differentiable map $f: M \rightarrow N$ between two smooth manifolds, denote by $\mc{D}f: TM \rightarrow TN$ the differential.

		\paragraph{Measures.}
		The volume of a measurable set $\region \subset \R^2$ w.r.t.~the Lebesgue measure is denoted by $|\region|$.
		For a measurable space $A$ denote by $\MeasGen(A)$ the set of probability measures thereon. For a measurable map $f: A \rightarrow B$ by $f_\sharp \mu$ we denote the pushforward of measure $\mu$ from $A$ to $B$, defined by $(f_\sharp \mu)(\sigma) = \mu(f^{-1}(\sigma))$ for all measurable $\sigma \subset B$.
		We write $\spt \mu$ for the support of the measure $\mu$, which is the smallest closed set such that $\mu(A \setminus \spt \mu) = 0$.
		Throughout this paper we will assume that all measures on $\R^2$ are absolutely continuous, i.e. they yield zero on any Lebesgue negligible set. This is equivalent to the existence of a locally integrable map $\rho$ such that
		\[ \mu(A) = \int_A \rho\,dx\,. \]
		$\rho$ is called density of $\mu$ and is unique Lebesgue almost everywhere.

		\paragraph{Sobolev Spaces.}
		Denote by $H(\ddiv,\region)$ the space of square-integrable vector fields $u: \region \rightarrow \R^2$, with square integrable divergence
		\[ \ddiv u = \nabla \cdot u = \sum_{i=1}^2 \frac{\partial}{\partial x_i} u_i \,. \]
		This is a Hilbert space with scalar product and norm
		\[ \langle u, v \rangle_{\ddiv; \region} = \int_\region u \cdot v \,dx
			+ \int_\region (\ddiv u) (\ddiv v)\,dx, \qquad
			\|u\|^2_{\ddiv; \region} = \langle u, u \rangle_{\ddiv; \region}\,.
		\]

		For some Sobolev space $W$, by $[u]$ we denote the equivalence class of functions $u \in W$ that only differ by a constant. They form a unique element of the quotient space $W/\R$. The corresponding norm is given by
		\[ \| [u] \|_{W/\R} = \inf \{ \|u\|_W \,\colon\, u \in [u] \}\,. \]
		
\section{Mathematical Background}
	\label{sec:Background}	
	\subsection{Contour Manifolds}
		\label{sec:BackgroundContours}
		The set of embeddings of the unit circle $S^1$ into $\R^2$ can formally be treated as infinite dimensional manifold. A corresponding framework is laid out in \cite{MichorGlobalAnalysis} and an overview is given for example in \cite{Michor2006}. We now summarize the facts that are relevant for this article.
		
		\begin{definition}[Space of Smooth Mappings, Manifold of Embeddings, Manifold of Submanifolds]
			Denote by $C^\infty(S^1,\R^2)$ the vector space of smooth mappings from $S^1$ into $\R^2$, equipped with the topology of uniform convergence in all derivatives and spatial components.
			By $\emb$ we denote the set of $C^\infty$-embeddings $S^1 \rightarrow \R^2$. It is an open submanifold of $C^\infty(S^1,\R^2)$. Its tangent bundle $T\emb$ is given by $\emb \times C^\infty(S^1,\R^2)$.		
			Let $\diff$ be the Lie group of $C^\infty$-diffeomorphisms on $S^1$.
			Then, by $\sfold$ we denote the quotient set $\emb / \diff$ of equivalence classes in $\emb$, two contours in $\emb$ being equivalent if there is a reparametrization in $\diff$ that transforms one into the other by right composition.
			That is for $c_1,c_2 \in \emb$ have $c_1 \sim c_2$ if $c_2 = c_1 \circ \varphi$ for some $\varphi \in \diff$.
			
			The set $\sfold$ of equivalence classes $[c]$ on $\emb$ is then itself a smooth manifold and the continuous map
			\begin{align}
				\pi : \emb \rightarrow \sfold, \qquad c \mapsto [c]
			\end{align}
			that takes contours to their equivalence class is a principal bundle with total space $\emb$, base space $\sfold$ and structure group $\diff$.
		\end{definition}
		The relevant parts of \cite{MichorGlobalAnalysis} for this definition are: Sect.~6.1, Thm.~42.1 for the structure of $C^\infty(S^1,\R^2)$, Thm.~44.1 for the principal bundle $(\emb,\pi,\sfold,\diff)$.

		For a given contour $c \in \emb$ we denote by $n_c \in C^\infty(S^1,\R^2)$ its \emph{outward pointing unit-normal field}.
		
		\begin{definition}[Vertical and Horizontal Bundle, Horizontal Lifting \protect{\cite[Sect.~37]{MichorGlobalAnalysis}}]
			The vertical bundle on $\emb$ with respect to $\pi$, $V\emb = \ker \mc{D}\pi$, is at each point $c \in \emb$ the set of tangent vectors
			\begin{equation}
				V_c \emb = \left \{ a \in T_c \emb = C^\infty(S^1,\R^2) \,\colon\, \langle a(\theta), n_c(\theta) \rangle_{\R^2} = 0 \,\forall\,\theta \in S^1\right\}
			\end{equation}
			which are locally orthogonal to the normal field $n_c$ on $c$.
			A corresponding choice for the horizontal bundle is then given by
			\begin{equation}
				H_c \emb = \left \{ a \cdot n_c \,\colon\, a \in C^\infty(S^1,\R) \right\}\,.
			\end{equation}
			This is the orthogonal complement of $V_c \emb$ w.r.t.~the $L^2$-inner product on $T_c \emb$.
			
			The projection $\pi : \emb \rightarrow \sfold$ induces an isomorphism
			\begin{align}
				\pi_{c,\ast} : H_c \emb \rightarrow T_{\pi(c)} \sfold
			\end{align}
			whose inverse is referred to as \emph{horizontal lift}. For every tangent vector $v \in T_{\pi(c)}\sfold$ there is a unique horizontal vector field $a \in H_c \emb$ such that $\pi_{c,\ast}(a) = v$.
		\end{definition}
		
		\begin{lemma}[Horizontal Parametrization]
			\thlabel{thm:HorizontalParametrization}
			Any $C^1$ contour-family $[0,1] \ni t \mapsto c_t \in \emb$ can be reparametrized such that $\dot{c}_t = a_t \cdot n_{c_t}$, $a_t \in C^\infty(S^1,\R)$, i.e. such that the temporal deformation is normal to the contour and the tangent vectors lie in the horizontal bundle.
		\end{lemma}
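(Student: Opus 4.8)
The plan is to realise the required reparametrization as the flow of a time-dependent vector field on $S^1$ whose sole purpose is to cancel the tangential part of $\dot c_t$. First I would take an arbitrary candidate reparametrization $\varphi : [0,1] \to \diff$ with $\varphi_0 = \id$, set $\tilde c_t = c_t \circ \varphi_t$, and compute by the chain rule
\[
	\frac{d}{dt}\tilde c_t(\theta) = \dot c_t(\varphi_t(\theta)) + (\partial_\theta c_t)(\varphi_t(\theta))\,\dot\varphi_t(\theta)\,.
\]
Since $c_t$ is an embedding we have $(\partial_\theta c_t)(s) = |(\partial_\theta c_t)(s)|\,\tau_{c_t}(s)$ with $\tau_{c_t}$ the unit tangent and the factor strictly positive, so I would decompose $\dot c_t(s) = a_t(s)\,n_{c_t}(s) + b_t(s)\,\tau_{c_t}(s)$ into its normal and tangential components, where $a_t(s) = \langle \dot c_t(s), n_{c_t}(s)\rangle_{\R^2}$ and $b_t(s) = \langle \dot c_t(s), \tau_{c_t}(s)\rangle_{\R^2}$ depend smoothly on $s$. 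Substituting, $\frac{d}{dt}\tilde c_t(\theta)$ equals $a_t(\varphi_t(\theta))\,n_{c_t}(\varphi_t(\theta))$ plus a purely tangential term with coefficient $b_t(\varphi_t(\theta)) + |(\partial_\theta c_t)(\varphi_t(\theta))|\,\dot\varphi_t(\theta)$.

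The second step is to force that tangential coefficient to vanish, which gives the ODE $\dot\varphi_t(\theta) = g_t(\varphi_t(\theta))$ with $g_t(s) := -\,b_t(s)/|(\partial_\theta c_t)(s)|$, a well-defined $C^\infty$ vector field on $S^1$ (the denominator never vanishes), and initial condition $\varphi_0 = \id$. I would solve this as the flow of the time-dependent field $g$: it is $C^\infty$ in the space variable and continuous in $t$ (from the $C^1$-dependence of $t \mapsto c_t$), so Picard–Lindelöf yields a unique local solution, and compactness of $S^1$ (the field and its spatial derivative are bounded uniformly on $[0,1]\times S^1$) upgrades it to a solution on all of $[0,1]$. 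Smooth dependence on initial conditions makes $\theta \mapsto \varphi_t(\theta)$ a $C^\infty$ map for each $t$; it is a diffeomorphism of $S^1$, being the time-$t$ map of a flow (with $C^\infty$ inverse obtained by running the flow backwards), orientation preserving, hence an element of $\diff$; and $t \mapsto \varphi_t$ is $C^1$.

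With this $\varphi$ the reparametrized family $\tilde c_t = c_t \circ \varphi_t$ is again a $C^1$ path in $\emb$ (a diffeomorphism precomposed with an embedding is an embedding), and by construction $\dot{\tilde c}_t = a_t(\varphi_t(\cdot))\,n_{c_t}(\varphi_t(\cdot))$. Because $\varphi_t$ is orientation preserving the geometric outward unit normal is unchanged by the reparametrization, $n_{\tilde c_t}(\theta) = n_{c_t}(\varphi_t(\theta))$, so $\dot{\tilde c}_t = \tilde a_t \cdot n_{\tilde c_t}$ with $\tilde a_t := a_t \circ \varphi_t \in C^\infty(S^1,\R)$, i.e. the temporal deformation is normal to the contour and the tangent vectors lie in the horizontal bundle $H_{\tilde c_t}\emb$.

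I expect the only genuinely delicate point to be the regularity bookkeeping for the flow: one must check that the merely continuous (rather than smooth) dependence of $g_t$ on $t$ still produces a solution operator that is simultaneously $C^1$ in $t$ and $C^\infty$ in $\theta$, which is where the mismatch between the Fréchet topology on $C^\infty(S^1,\R^2)$ and the classical (finite-order) smooth-dependence theorem for ODEs has to be stated with some care; everything else is a routine chain-rule computation and an application of standard flow theory on a compact manifold.
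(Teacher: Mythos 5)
Your proposal is correct and follows essentially the same route as the paper, which simply defers to the proposition in \cite{Michor2006}, Sect.~2.5: there too one splits $\dot c_t$ into normal and tangential parts and cancels the tangential part by solving the ODE $\dot\varphi_t = g_t\circ\varphi_t$ on $S^1$, the flow existing globally by compactness. Your closing caveat about time-regularity of the flow in the Fréchet setting is the right point to flag, and it is handled exactly as you suggest (order-by-order via the linear variational equations), so no gap remains.
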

		The proof is analogous to that of the proposition in \cite[Sect.~2.5]{Michor2006}, see also discussion ibid., Sect.~2.3.
		\begin{remark}
			\label{rem:HorizontalParametrization}
			Based on \thref{thm:HorizontalParametrization}, in the course of this paper, we will always describe contour deformations by scalar fields, that give the local deformation along the normal field, i.e. within the horizontal bundle. By aid of the unit normal-field on contours we will canonically identify
			\begin{equation}
				H \emb \cong \emb \times C^\infty(S^1,\R)\,.
			\end{equation}
		\end{remark}
		
		We show next that diffeomorphisms $\varphi \in \diff$ preserve horizontal lifting.
		\begin{proposition}
			\thlabel{thm:TangentEquivalence}
			For any $v \in T_{[c]} \sfold$ and $\varphi \in \diff$ have
			\begin{align}
				\label{eq:HorizontalLiftCommutation}
				\big( \pi_{c,\ast}^{-1}(v) \big) \circ \varphi = \pi_{c \circ \varphi,\ast}^{-1}(v)\,.
			\end{align}
			This implies that any element in the tangent bundle $T\sfold$ can be represented by an equivalence class $[(c,a)]$ in $H\emb$, equivalence $(c_1,a_1) \sim (c_2,a_2)$ holding when there is some $\varphi \in \diff$ such that $c_2 = c_1 \circ \varphi$ and $a_2 = a_1 \circ \varphi$.
		\end{proposition}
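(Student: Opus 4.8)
The plan is to derive both assertions from the naturality of the horizontal lift under the principal right action of $\diff$ on $\emb$. First I would introduce the right translation $R_\varphi\colon\emb\to\emb$, $c\mapsto c\circ\varphi$, and collect three elementary observations. (i) Because $c$ and $c\circ\varphi$ lie in the same $\diff$-orbit, $\pi\circ R_\varphi=\pi$, hence $\mc{D}\pi|_{c\circ\varphi}\circ\mc{D}R_\varphi|_c=\mc{D}\pi|_c$ for every $c\in\emb$. (ii) Since $\emb$ is open in the vector space $C^\infty(S^1,\R^2)$ and $R_\varphi$ is the restriction of the \emph{linear} map $g\mapsto g\circ\varphi$, its differential is that same map, so $\mc{D}R_\varphi|_c(a)=a\circ\varphi$ for all $a\in T_c\emb=C^\infty(S^1,\R^2)$. (iii) The outward pointing unit normal at a point of the contour is intrinsic to the image curve and the bounded region it encloses, hence unchanged by reparametrization; thus $n_{c\circ\varphi}=n_c\circ\varphi$. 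Combining (ii) and (iii), $\mc{D}R_\varphi|_c$ restricts to a linear isomorphism $H_c\emb\to H_{c\circ\varphi}\emb$ that sends $a\cdot n_c$ to $(a\circ\varphi)\cdot n_{c\circ\varphi}$.

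Next I would prove \eqref{eq:HorizontalLiftCommutation} directly. Fixing $v\in T_{[c]}\sfold$ and setting $a:=\pi_{c,\ast}^{-1}(v)\in H_c\emb$, the previous paragraph shows $a\circ\varphi\in H_{c\circ\varphi}\emb$, while (ii) and (i) give $\mc{D}\pi|_{c\circ\varphi}(a\circ\varphi)=\mc{D}\pi|_{c\circ\varphi}\big(\mc{D}R_\varphi|_c(a)\big)=\mc{D}\pi|_c(a)=v$. As $\pi_{c\circ\varphi,\ast}$ is an isomorphism onto $T_{[c]}\sfold$ — exactly what the horizontal-lift construction recalled before the proposition provides — the horizontal vector mapping to $v$ is unique, so $\pi_{c\circ\varphi,\ast}^{-1}(v)=a\circ\varphi=\big(\pi_{c,\ast}^{-1}(v)\big)\circ\varphi$.

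For the representation of $T\sfold$, I would use the canonical identification $H\emb\cong\emb\times C^\infty(S^1,\R)$ (Remark~\ref{rem:HorizontalParametrization}) and consider $\Psi\colon(c,a)\mapsto\pi_{c,\ast}(a\cdot n_c)\in T_{[c]}\sfold$. Surjectivity onto $T\sfold$ is immediate, since a given tangent vector has horizontal lift of the form $a\cdot n_c$. For the fibres: if $\Psi(c_1,a_1)=\Psi(c_2,a_2)=:v$ then $[c_1]=[c_2]$, so $c_2=c_1\circ\varphi$ for some $\varphi\in\diff$, unique because $c_1$ is injective; applying \eqref{eq:HorizontalLiftCommutation} turns $\pi_{c_2,\ast}^{-1}(v)=a_2\cdot n_{c_2}$ into $(a_1\circ\varphi)\cdot n_{c_2}=a_2\cdot n_{c_2}$, equivalently $a_2=a_1\circ\varphi$ because $n_{c_2}$ vanishes nowhere. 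Hence $\Psi$ descends to a bijection $H\emb/{\sim}\to T\sfold$ with the stated equivalence relation.

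I do not anticipate a real obstacle: once (ii) and (iii) are in place, the rest is principal-bundle bookkeeping. The two points needing a moment's care are the justification that $\mc{D}R_\varphi$ is literally precomposition (resting on $\emb$ being open in a topological vector space, so that a continuous linear map is its own derivative in the sense of \cite{MichorGlobalAnalysis}) and that $n_{c\circ\varphi}=n_c\circ\varphi$ persists for \emph{orientation-reversing} $\varphi\in\diff$, which is why I would insist on the geometric description of the outward normal rather than one obtained by rotating the tangent field. That "the horizontal lift is unique" is meaningful is guaranteed by the isomorphism property of $\pi_{c,\ast}$ assumed in the preceding definition.
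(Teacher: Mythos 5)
Your proposal is correct and takes essentially the same route as the paper: both arguments show that $a \circ \varphi$ is horizontal at $c \circ \varphi$ and is sent to $v$ by $\mc{D}\pi$ (the paper by differentiating the reparametrized horizontal path $c_{1,t}\circ\varphi$, you by noting that right translation $R_\varphi$ is linear with $\pi \circ R_\varphi = \pi$, so its differential is precomposition), and then conclude via uniqueness of the horizontal lift, with the equivalence-class representation following by the same bookkeeping. Your explicit verification that $n_{c\circ\varphi}=n_c\circ\varphi$ (also for orientation-reversing $\varphi$) is a slightly more careful rendering of a step the paper leaves implicit, not a different argument.
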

		\begin{proof}
			Let $c_1 \in \emb$ and $c_2 = c_1 \circ \varphi$ for some $\varphi \in \diff$. Let further $v \in T_{[c_1]}\sfold$ and let $a_1 = \pi_{c_1,\ast}^{-1}(v)$.
			There is then a horizontal $C^1$-path $c_{1,t}$ in $\emb$ with $\partial_t c_{1,t}|_{t=0}=a_1$. The path $c_{1,t} \circ \varphi$ is a horizontal path through $c_2$ at $t=0$. By differentiation we find that it is tangent to $a_2 = a_1 \circ \varphi$ in $t=0$ with $a_2 \in H_{c_2} \emb$.
			Since $\pi(c_{1,t}) = \pi(c_{2,t})$ for all times, we must have that $\mc{D}\pi(c_1,a_1) = \mc{D}\pi(c_2,a_2) = ([c_1],v)$. Hence $\pi_{c_2,\ast}(a_2) = v$ and therefore $a_2 = \pi_{c_2,\ast}^{-1}(v)$. This establishes \eqref{eq:HorizontalLiftCommutation}.
			
			Hence, analogously to $\emb$, we introduce an equivalence relation on $H\emb$ by stating $(c_1,a_1) \sim (c_2, a_2)$ if $c_2=c_1 \circ \varphi$ and $a_2 = a_1 \circ \varphi$ for some $\varphi \in \diff$. We can then represent the point $([c_1],v) \in T\sfold$ by the equivalence class $[(c_1,a_1)]$ in $H\emb$. By virtue of \eqref{eq:HorizontalLiftCommutation} all elements in $[(c_1,a_1)]$ consistently represent the same element $([c_1],v)$ and by virtue of horizontal lifting we know that every point in $([c],v) \in T\sfold$ has a representing equivalence class with one element in $H_{c}\emb$ for every $c \in [c]$.
		\end{proof}
		
		Finally we need to establish how to verify convergence on $\sfold$.
		\begin{proposition}[Convergence on $\sfold$]
			\thlabel{thm:ConvergenceOnSFold}
			A sequence $[c_n]$ in $\sfold$ converges to some $[c] \in \sfold$ if and only if there is a sequence $c'_n$ and a point $c'$ in $\emb$ with $c'_n \in [c_n]$ for all $n$ and $c' \in [c]$ such that $c'_n \rightarrow c'$ in $\emb$.
		\end{proposition}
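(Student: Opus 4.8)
The plan is to read \thref{thm:ConvergenceOnSFold} as an essentially formal consequence of the principal bundle structure $(\emb,\pi,\sfold,\diff)$ recorded in Section~\ref{sec:BackgroundContours}, treating the two implications separately.

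For the "if" direction I would simply invoke continuity of the bundle projection $\pi:\emb\to\sfold$. If $c'_n\in[c_n]$, $c'\in[c]$ and $c'_n\to c'$ in $\emb$, then $[c_n]=\pi(c'_n)\to\pi(c')=[c]$ in $\sfold$, since $\pi$ is continuous. Nothing more is needed here; this half does not use the bundle structure beyond continuity.

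For the "only if" direction, which carries the real content, I would use that a principal bundle admits smooth local sections. Concretely, by the structure of $(\emb,\pi,\sfold,\diff)$ (Thm.~44.1 in \cite{MichorGlobalAnalysis}) there is an open neighbourhood $U\subseteq\sfold$ of $[c]$ together with a smooth map $s:U\to\emb$ such that $\pi\circ s=\id_U$ (take $s(b)=\psi^{-1}(b,e)$ for a local trivialization $\psi:\pi^{-1}(U)\xrightarrow{\sim}U\times\diff$ and the identity $e\in\diff$). Assuming $[c_n]\to[c]$ in $\sfold$, there is $N$ with $[c_n]\in U$ for all $n\ge N$; for these indices I set $c'_n:=s([c_n])$, I put $c':=s([c])$, and for the finitely many remaining indices $n<N$ I pick an arbitrary representative $c'_n\in[c_n]$. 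By construction $c'_n\in\pi^{-1}([c_n])=[c_n]$ for every $n$ and $c'\in[c]$, and continuity of $s$ gives $c'_n=s([c_n])\to s([c])=c'$ in $\emb$. Since modifying finitely many initial terms does not affect convergence, this yields the desired sequence.

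The only nonroutine ingredient — and hence the point I would flag as the main obstacle — is the existence of the continuous (indeed smooth) local section $s$, equivalently the local triviality of $\pi$ in the infinite-dimensional, convenient-calculus setting of \cite{MichorGlobalAnalysis}; I would not reprove this but cite that reference, after which the argument is purely formal. A minor side check is to make precise that "convergence in $\emb$" means convergence in the ambient Fréchet space $C^\infty(S^1,\R^2)$ (uniform convergence of the maps together with all derivatives), so that the statement "$c'_n\to c'$ in $\emb$" is unambiguous and the reduction to the finitely-many-bad-terms argument above is legitimate.
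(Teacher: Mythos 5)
Your proposal is correct and follows essentially the same route as the paper: the ``if'' part from continuity of $\pi$, and the ``only if'' part from the local triviality of the principal bundle $(\emb,\pi,\sfold,\diff)$, using a local trivialization over a neighbourhood $U$ of $[c]$ to produce convergent representatives (the paper fixes an arbitrary $\varphi\in\diff$ where you fix the identity, which is the same device). Your explicit handling of the finitely many indices with $[c_n]\notin U$ is a small tidying-up the paper leaves implicit, but it is not a different argument.
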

		
		\begin{proof}
			The `if' part follows immediately from the continuity of $\pi$. The `only if' part works as follows: let $U$ be an open neighbourhood of $[c]$ in $\sfold$ such that $\pi^{-1}(U) \simeq U \times \diff$. Then, since $[c_n] \rightarrow [c]$, all $[c_n]$ will eventually lie in $U$. We can then pick any element $\varphi$ from $\diff$ and employ the local isomorphism of the fiber bundle to turn the sequence $([c_n],\varphi)$ into some sequence in $\emb$ converging to the $c$ corresponding to $([c],\varphi)$.
		\end{proof}
		
	\subsection{Flows and Diffeomorphisms}
		Flow-fields and the diffeomorphisms they induce are important tools in this article because of the way they act on subsets of $\R^2$. We collect some corresponding facts.

		Let $B$ denote the open unit ball in $\R^2$ centered at the origin and
		\begin{equation}
			B_0 = \{ x \in B : x_1 = 0 \},\quad B_+ = \{ x \in B : x_1 > 0 \}, \quad B_- \{ x \in B : x_1 < 0 \}\,.
		\end{equation}
		\begin{definition}[\protect{\cite[Defn.~3.1]{ZolesioShape2011}}]
			A subset $\region \subset \R^2$ is \emph{locally of class $C^k$} if for any $x \in \partial \region$ there exists a neighbourhood $U(x)$ of $x$ and a map $g_x \in C^k\big(U(x),B\big)$ with inverse $g^{-1} \in C^k\big(B,U(x)\big)$ such that
			\begin{equation}
				g_x\big(\interior \region \cap U(x)\big) = B_+,
				\quad g_x\big(\partial \region \cap U(x)\big) = B_0\,.
			\end{equation}
			
			If $g_x$ and $g_x^{-1}$ are also bi-Lipschitzian for all $x \in \partial \region$ then $\region$ is said to be \emph{locally $k$-Lipschitzian}.
		\end{definition}		
		
		If an open set $\region$ of class $C^\infty$ is simply connected, its boundary $\partial \region$ is diffeomorphic to $S^1$ and can be parametrized by a map $c \in \emb$.
		
		In the context of image segmentation an important type of shape functional is the integration of a given function over the interior of the shape. The following \thnameref{thm:ShapeDerivative} gives the derivative of such an integration in the contour representation w.r.t.~a contour deformation.
		\begin{lemma}[Shape Derivative]
			\thlabel{thm:ShapeDerivative}
			For a family of contours $[0,1] \ni t \mapsto c_t \in \emb$ which is $C^1$ in time and for some $\phi \in C^{\infty}_{\textnormal{loc}}(\R^2)$
			\begin{equation}
				\frac{d}{dt} \int_{\region(c_t)} \phi\,dt = \int_{\partial \region(c_t)} \phi \, \langle \dot{c}_t \circ c^{-1}_t, n_{c_t} \circ c_t^{-1} \rangle \,ds\,.
			\end{equation}
		\end{lemma}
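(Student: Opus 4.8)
The plan is to realise the moving region $\region(c_t)$ as the image of a fixed region under the flow of a time‑dependent vector field, reduce the assertion to a differentiation‑under‑the‑integral computation via the change‑of‑variables formula, and close with the divergence theorem. Since $\frac{d}{dt}$ is evaluated pointwise in $t$, it suffices to prove the identity at an arbitrary fixed time, which we take to be $t=0$ (translating the time origin).

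\emph{Construction of a transporting flow.} The map $(t,\theta)\mapsto c_t(\theta)$ is $C^1$ in $t$ and $C^\infty$ in $\theta$, and each $c_t$ is an embedding, so the Eulerian velocity $v_t:=\dot c_t\circ c_t^{-1}$ is a well‑defined vector field on $\partial\region(c_t)$. Using the normal collar $\Psi(\theta,r,t)=c_t(\theta)+r\,n_{c_t}(\theta)$ of the compact smooth curve $\partial\region(c_t)$ — a diffeomorphism onto a tubular neighbourhood for $|r|$ small, uniformly for $t$ near $0$ — I extend $v_t$ to a vector field $\tilde v_t$ on $\R^2$ (set $\tilde v_t\circ\Psi(\cdot,\cdot,t):=\partial_t\Psi(\cdot,\cdot,t)$ on the neighbourhood, which agrees with $v_t$ at $r=0$, then damp it with a fixed smooth bump supported in a tube around $\partial\region(c_0)$ on which $\chi\equiv1$ for $t$ near $0$). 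Then $t\mapsto\tilde v_t$ is continuous with values in $C^1$ vector fields, hence has a flow $\Phi_t$ with $\Phi_0=\id$ and $\partial_t\Phi_t=\tilde v_t\circ\Phi_t$; since $t\mapsto c_t(\theta)$ solves $\dot\gamma=\tilde v_t(\gamma)$ with initial value $c_0(\theta)$, uniqueness gives $\Phi_t\circ c_0=c_t$ for $t$ near $0$. Thus $\Phi_t$ is a diffeomorphism taking the Jordan curve $\partial\region(c_0)$ onto $\partial\region(c_t)$, and by continuity from $\Phi_0=\id$ it takes the bounded component $\region(c_0)$ onto $\region(c_t)$.

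\emph{Differentiation and divergence theorem.} By the change‑of‑variables formula, using $\det J_{\Phi_0}\equiv 1>0$ and continuity to drop the absolute value for small $t$,
\[ \int_{\region(c_t)}\phi\,dx=\int_{\region(c_0)}(\phi\circ\Phi_t)(y)\,\det J_{\Phi_t}(y)\,dy\,. \]
The integrand is $C^1$ in $t$, uniformly on the compact set $\overline{\region(c_0)}$ — smoothness of $\phi$ near $\overline{\region(c_0)}$ is all that is used — so one may differentiate under the integral; with $\partial_t\Phi_t|_{t=0}=\tilde v_0$ and Jacobi's formula $\tfrac{d}{dt}\det J_{\Phi_t}=\det J_{\Phi_t}\,(\ddiv\tilde v_t)\circ\Phi_t$, which at $t=0$ yields $\ddiv\tilde v_0$, this gives $\int_{\region(c_0)}\big(\nabla\phi\cdot\tilde v_0+\phi\,\ddiv\tilde v_0\big)\,dy=\int_{\region(c_0)}\ddiv(\phi\,\tilde v_0)\,dy$. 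Since $\region(c_0)$ has smooth boundary with outward unit normal $n_{c_0}\circ c_0^{-1}$ and $\tilde v_0=v_0=\dot c_0\circ c_0^{-1}$ there, the divergence theorem gives
\[ \frac{d}{dt}\Big|_{t=0}\int_{\region(c_t)}\phi\,dx=\int_{\partial\region(c_0)}\phi\,\langle\dot c_0\circ c_0^{-1},\,n_{c_0}\circ c_0^{-1}\rangle\,ds\,, \]
which is the claim at $t=0$, hence for all $t$. (Note the tangential part of $\dot c_t$ is irrelevant, as it should be, since only $\langle\cdot,n\rangle$ survives.)

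The change‑of‑variables and Jacobi bookkeeping and the divergence theorem are routine; \textbf{the main obstacle is the first step}: producing a transporting flow with enough regularity in $t$ to differentiate under the integral while assuming only $t\mapsto c_t\in C^1$. One can streamline it by invoking \thref{thm:HorizontalParametrization} to reparametrize so that $\dot c_t=a_t\,n_{c_t}$, whence the extension may be taken as $a_t\,n_{c_t}$ damped by a fixed cutoff in the collar coordinate $r$; alternatively one can avoid flows altogether and estimate directly the $\phi$‑mass of the thin symmetric‑difference shell $\region(c_{t+h})\,\triangle\,\region(c_t)$, but controlling the error terms rigorously is of comparable effort.
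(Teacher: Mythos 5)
Your proof is correct, and at bottom it is the same velocity-method argument the paper relies on; the difference is that the paper disposes of the lemma in three lines by extending the normal component of $\dot c_t$ to a smooth field near the boundary (frozen at the instant $\tilde t$, correct only to first order there) and then citing the shape-derivative theorem of Delfour--Zol\'esio \cite[Chap.~9, Thm.~4.2]{ZolesioShape2011}, whereas you reprove that cited result from scratch: collar extension of the full Eulerian velocity, integration to a transporting flow with $\Phi_t\circ c_0=c_t$, change of variables, Jacobi's formula, and the divergence theorem. What your route buys is self-containedness and a cleaner logical structure --- by using a genuinely time-dependent flow you never need the paper's slightly delicate remark that a static field ``exact only at $\tilde t$'' suffices --- at the cost of the tubular-neighbourhood/flow bookkeeping (uniform collar radius for $t$ near $0$, continuity of $t\mapsto\tilde v_t$ in $C^1$, $C^1$-in-$t$ dependence of $J_{\Phi_t}$), all of which you handle correctly or at worst gloss over in standard ways. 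Your closing observation that only $\langle\dot c_t,n_{c_t}\rangle$ survives is exactly the point the paper exploits by extending only the normal component in the first place.
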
		
		\begin{proof}
			By virtue of \cite[Chap.~4, Sect.~3.3.2]{ZolesioShape2011}, for any given time $\tilde{t} \in [0,1]$, we can use the normal component of the time derivative of $c_t$ and extend it to some $C^\infty$-field around the boundary. This field will only describe the deformation correctly up to first order in $t$, being exact only in $\tilde{t}$ itself. This is however sufficient to apply \cite[Chap.~9, Thm.~4.2]{ZolesioShape2011} for this instant. As we can do this for any $t$, the proof is complete.
		\end{proof}

		Now we make some definitions similar to \cite[Sect.~8.2.1]{YounesShape2010}. The goal is to establish existence and regularity of diffeomorphisms associated to flow-fields by integration.
		
		For some bounded open $\region \subset \R^2$ and a positive integer $p$ we denote by $C^p_0(\region,\R^2)$ the Banach space of continuously differentiable vector fields $\alpha$ on $\region$, such that the support of $\alpha$ and its derivatives up to $p$-th order lies within $\region$.		
		Denote the corresponding norm by
		\begin{align}
			\|\alpha\|_{\region,p,\infty} = \sum_{I \colon |I|\leq p} \| \partial_I \alpha \|_{\region,\infty}
		\end{align}
		with $\|\cdot\|_{\region,\infty}$ denoting the supremum-norm on $\region$.
		
		We then define the set of absolutely integrable functions from $[0,1]$ to $C^p_0(\region,\R^2)$ by
		\begin{align}
			\label{eq:YounesFlowFieldSpace}
			\mc{X}_p(\region) = \{ \alpha : [0,1] \rightarrow C^p_0(\region,\R^2) \, \colon \,
				\|\alpha\|_{\mc{X}_p(\region)} < \infty \}
				\quad \text{with} \quad
			\|\alpha\|_{\mc{X}_p(\region)} = \int_0^1 \| \alpha_t \|_{\region,p,\infty} \,dt\,.
		\end{align}
		
		Given these regularity conditions, we find:		
		\begin{theorem}[\protect{\cite[Thms.~8.7,8.9]{YounesShape2010}}]
			\thlabel{thm:FlowDiffs}
			A flow-field path $\alpha \in \mc{X}_p(\region)$ induces a family of diffeomorphisms $\varphi_t$, $t \in [0,1]$, on $\region$ via the differential equation
			\begin{align}
				\partial_t \varphi_t = \alpha_t \circ \varphi_t, \qquad \varphi_0 = \id\,.
			\end{align}
			$\varphi_t$ is $p$-times differentiable and for all $I$ with $|I| \leq p$ have
			\begin{align}
				\partial_t \partial_I \varphi_t = \partial_I (\alpha_t \circ \varphi_t)
			\end{align}
			with corresponding initial conditions.
		\end{theorem}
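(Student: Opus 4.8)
The plan is to read the defining relation $\partial_t \varphi_t = \alpha_t \circ \varphi_t$, $\varphi_0 = \id$, as an ordinary differential equation in the Banach space $C^0_b(\region,\R^2)$ of bounded continuous self-maps of $\region$, and to prove existence and uniqueness by a Carathéodory-type fixed-point argument, since $\alpha$ is only assumed integrable — not continuous — in time. Concretely, I would pass to the integral form $\varphi_t = \id + \int_0^t \alpha_s \circ \varphi_s \, ds$ and run a contraction/Picard iteration on $C\big([0,T], C^0_b(\region,\R^2)\big)$. The key quantitative input is that each $\alpha_t$ is Lipschitz with constant at most $\|\alpha_t\|_{\region,1,\infty} \le \|\alpha_t\|_{\region,p,\infty}$, and that $t \mapsto \|\alpha_t\|_{\region,p,\infty}$ is integrable by the very definition of $\mc{X}_p(\region)$; a Gr\"onwall estimate then promotes local to global existence on all of $[0,1]$ and gives uniqueness. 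Because every $\alpha_t$ and each of its derivatives is compactly supported in $\region$, a trajectory that starts in $\region$ can never reach $\partial\region$ — in fact $\varphi_t$ is the identity off the (time-varying) support of $\alpha_t$ — so $\varphi_t$ is a well-defined self-map of $\region$.

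Next, to get the diffeomorphism property I would integrate the time-reversed field $s \mapsto -\alpha_{t-s}$, obtain a candidate inverse flow with the same regularity, and use uniqueness of solutions of the ODE (and of its variational equation below) to conclude that $\varphi_t$ is a bijection with $\varphi_t^{-1}$ of the same class. The spatial regularity and the commutation formula $\partial_t \partial_I \varphi_t = \partial_I(\alpha_t \circ \varphi_t)$ I would establish by induction on $|I|$. Differentiating the integral equation formally in a spatial direction produces the \emph{variational equation} $\partial_t(D\varphi_t) = (D\alpha_t \circ \varphi_t)\,D\varphi_t$, $D\varphi_0 = \id$, again a linear Carathéodory ODE in the Banach space of bounded continuous matrix fields, solvable and unique by Gr\"onwall. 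Higher-order derivatives up to order $p$ are treated by differentiating the variational equation the appropriate number of times — this is exactly where $\alpha_t \in C^p$ is used — with multivariate chain-rule (Fa\`a di Bruno) bookkeeping for the nested composition terms; every resulting equation is linear with integrable-in-time coefficients. Once continuity of all mixed derivatives is in hand, the commutation formula is simply the chain rule applied to $\partial_t \varphi_t = \alpha_t \circ \varphi_t$ together with the licit exchange of $\partial_t$ and $\partial_I$.

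The step I expect to be the main obstacle is the one routine arguments tend to skate over: showing that the solution of the variational equation really is the spatial derivative of $\varphi_t$, i.e.\ upgrading formal differentiation under the integral sign to a rigorous statement. This is done by estimating the difference quotients $\tfrac{1}{h}\big(\varphi_t(x + h e_j) - \varphi_t(x)\big)$ against the candidate derivative via yet another Gr\"onwall inequality, exploiting the uniform continuity of $D\alpha_t$ on its compact support. The merely $L^1$ time dependence forces everything to be phrased in the absolutely-continuous / Carathéodory framework rather than the classical $C^1$-in-$t$ one, which adds a layer of technical care throughout. Since the assertion is precisely \cite[Thms.~8.7,8.9]{YounesShape2010}, in the paper I would simply invoke that reference; the sketch above indicates the structure of the argument given there.
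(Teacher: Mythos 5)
Your proposal is fine: the paper itself gives no proof of this statement but simply cites \cite[Thms.~8.7,8.9]{YounesShape2010}, exactly as you do in your closing sentence. Your Carath\'eodory/Picard--Gr\"onwall sketch, with the time-reversed flow for invertibility and the variational equation plus difference-quotient argument for the spatial derivatives up to order $p$, is the standard argument underlying that reference, so there is nothing to add.
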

	
		We will later need the following small Lemma, based on the theorem above.
		
		\begin{lemma}[Uniform convergence of $\varphi_t$]
			\thlabel{thm:UniformDPhiContinuity}
			For $\alpha \in \mc{X}_p(\region)$ the corresponding family of diffeomorphisms $\varphi_t$ according to \thref{thm:FlowDiffs} is continuous in time w.r.t.~uniform convergence in its derivatives up to $p$-th order.
		\end{lemma}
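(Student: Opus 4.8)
The plan is to show that for each multi-index $I$ with $|I| \le p$ the map $t \mapsto \partial_I \varphi_t$ is continuous from $[0,1]$ into the Banach space of bounded continuous $\R^2$-valued fields on $\region$ equipped with the supremum norm; summing the resulting estimates over all $|I|\le p$ then yields continuity of $t\mapsto\varphi_t$ with respect to $\|\cdot\|_{\region,p,\infty}$, which is the claim. The starting point is \thref{thm:FlowDiffs}, which gives $\partial_t \partial_I \varphi_t = \partial_I(\alpha_t\circ\varphi_t)$ for $|I|\le p$. Integrating in time,
\[
  \partial_I \varphi_t - \partial_I \varphi_{t'} = \int_{t'}^{t} \partial_I(\alpha_s\circ\varphi_s)\,ds,
\]
so it suffices to bound the integrand in the supremum norm by a function of $s$ that is integrable on $[0,1]$; absolute continuity of the Lebesgue integral then forces the right-hand side to $0$ as $|t-t'|\to 0$.

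For the required bound I would first establish that $M := \sup_{t\in[0,1]} \|\varphi_t\|_{\region,p,\infty}$ is finite. Since $\region$ is bounded and $\varphi_t(\region)\subseteq\region$, the zeroth-order norm $\|\varphi_t\|_{\region,\infty}$ is bounded by $\sup_{x\in\region}|x|$. For the higher derivatives, expand $\partial_I(\alpha_s\circ\varphi_s)$ via the chain rule (Faà di Bruno): it is a finite sum of terms, each consisting of a derivative of $\alpha_s$ of order $\le|I|$ evaluated at $\varphi_s$, multiplied by a product of derivatives of $\varphi_s$ whose orders sum to $|I|$. Isolating the unique term containing $\partial_I\varphi_s$ itself, namely $J_{\alpha_s}(\varphi_s)\,\partial_I\varphi_s$, and treating the remaining terms — which involve only derivatives of $\varphi_s$ of order $<|I|$ — as an inhomogeneity, one obtains inductively in $|I|$ an estimate of the shape
\[
  \|\partial_I\varphi_t\|_{\region,\infty}\le a_{|I|} + \int_0^t \|\alpha_s\|_{\region,|I|,\infty}\,\|\partial_I\varphi_s\|_{\region,\infty}\,ds + \int_0^t R_{|I|}(s)\,ds,
\]
where $R_{|I|}$ collects the lower-order contributions, already controlled at the previous induction step, against $\|\alpha_s\|_{\region,p,\infty}$. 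Grönwall's inequality together with $\|\alpha\|_{\mc{X}_p(\region)}<\infty$ then yields a finite bound, so $M<\infty$. (Alternatively, this boundedness can be extracted directly from \cite[Thms.~8.7,8.9]{YounesShape2010}.)

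With $M<\infty$ in hand, the same Faà di Bruno expansion gives $\|\partial_I(\alpha_s\circ\varphi_s)\|_{\region,\infty}\le P(M)\,\|\alpha_s\|_{\region,p,\infty}$ for a fixed polynomial $P$ independent of $s$ and $I$ (with $|I|\le p$). Combining this with the integral identity above,
\[
  \sum_{|I|\le p}\|\partial_I\varphi_t-\partial_I\varphi_{t'}\|_{\region,\infty} \le C(M)\int_{t'}^{t}\|\alpha_s\|_{\region,p,\infty}\,ds,
\]
and the right-hand side tends to $0$ as $t\to t'$ because $s\mapsto\|\alpha_s\|_{\region,p,\infty}$ is integrable on $[0,1]$ by the definition of $\mc{X}_p(\region)$. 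This is precisely the asserted continuity of $t\mapsto\varphi_t$ in the $\|\cdot\|_{\region,p,\infty}$-norm.

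I expect the main obstacle to be the chain-rule bookkeeping: writing $\partial_I(\alpha\circ\varphi)$ in a form in which the top-order term in $\varphi$ appears linearly and all other terms are controlled by strictly lower-order data, so that the inductive Grönwall argument establishing $M<\infty$ closes cleanly. Everything after that — passing from the uniform bound $M$ to an integrable dominating function for the integrand, and from there to continuity — is routine.
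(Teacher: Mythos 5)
Your proposal is correct and follows essentially the same route as the paper: induction over the derivative order, a chain-rule (Fa\`a di Bruno) expansion in which the top-order derivative of $\varphi_t$ enters linearly, a Gr\"onwall-type bound (the paper phrases it as a comparison ODE $\partial_t\hat\varphi_t=\hat\alpha_t\hat\varphi_t+\hat C_t$) to control $\|\partial_I\varphi_t\|_{\region,\infty}$ uniformly, and finally integration of $\partial_t\partial_I\varphi_t$ against an integrable dominating function built from $\|\alpha_t\|_{\region,p,\infty}$. Your only deviation is organizational (extracting the global bound $M$ first and then a single polynomial estimate), which changes nothing substantive.
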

		\begin{proof}
			In $0$-th order we have
			\begin{align}
				\|\varphi_{t_1}-\varphi_{t_2}\|_{\region,\infty} & \leq \int_{t_1}^{t_2} \| \partial_t \varphi_t \|_{\region,\infty}\,dt = \int_{t_1}^{t_2} \|\alpha_t\|_{\region,\infty}\,dt \rightarrow 0 \quad \text{as} \quad t_1 \rightarrow t_2\,.
			\end{align}
			For all orders from $1$ up to $p$ the proof can be established by induction: according to \thref{thm:FlowDiffs} for any multi-index $I$, $1 \leq |I|\leq p$ have
			\begin{align}
				\partial_t \partial_I \varphi_t & = \partial_I (\alpha_t \circ \varphi_t) \\
				\intertext{Using \cite[Lemma~8.3]{YounesShape2010} to disentangle the expression one finds}
					& = \sum_j \big((\partial_j \alpha_t) \circ \varphi_t \big) \big( \partial_I (\varphi_t)_j \big) + C_t
			\end{align}
			where $C_t$ is a combination of derivatives up to order $|I|-1$ of $\varphi_t$ and of derivatives up to order $|I|$ of $\alpha_t$. Assuming the Lemma holds for orders up to $|I|-1$ and using the assumption  $\alpha \in \mc{X}_p(\region)$, we can find some bound $\hat{C}_t$ with $\hat{C}_t \geq \| C_t \|_{\region,\infty}$ and $\int_0^1 \hat{C}_t dt < \infty$.
			Consider then the following ODE:
			\begin{align}
				\label{eq:FlowConvergenceProof1}
				\partial_t \hat{\varphi}_t & = \hat{\alpha_t}\,\hat{\varphi}_t + \hat{C}_t \qquad \text{with} \qquad \hat{\alpha}_t = \sum_j \|\partial_j \alpha_t \|_{\region,\infty}
			\end{align}
			For some initial condition $\hat{\varphi}_0 \geq \max_{j} \|\partial_I (\varphi_0)_j\|_{\region,\infty}$ the continuous solution $\hat{\varphi}_t$ to \eqref{eq:FlowConvergenceProof1} will satisfy $\hat{\varphi}_t \geq \max_{j} \|\partial_I (\varphi_t)_j\|_{\region,\infty}$.
			One then has
			\begin{align}
				\|\partial_I \varphi_{t_1} - \partial_I \varphi_{t_2} \|_{\region,\infty} & \leq
					\int_{t_1}^{t_2} \| \partial_t \partial_I \varphi_t \|_{\region,\infty} \, dt \leq
					\int_{t_1}^{t_2} \hat{\alpha}_t\,\hat{\varphi}_t + \hat{C}_t \, dt \rightarrow 0 \quad \text{as} \quad t_1 \rightarrow t_2\,.
			\end{align}
			For $|I|=1$ one finds $C_t = 0$, i.e.~the first step holds. The higher orders then follow from induction.
		\end{proof}
		
	\subsection{Optimal Transport and Riemannian Structures in the Space of Measures}
		\label{sec:BackgroundWasserstein}
		Throughout this article optimal transport will provide the underlying framework for representing shapes in terms of measures. Here we recall some definitions, in particular on the pseudo-Riemannian structure of Wasserstein spaces.
				
		Denote by $\Meas$ the space of probability measures on $\R^2$ with finite second moments. This set can be metrized with the Wasserstein distance, defined next.
		\begin{definition}[Wasserstein Distance]
			For $\mu_1, \mu_2 \in \Meas$ let the Wasserstein distance be given by
			\begin{align}
				\label{eq:WassersteinDistance}
				\WD(\mu_1,\mu_2) & = \left( \inf \left \{ \int_{\R^2 \times \R^2} \|x-y\|^2 d\mu(x,y) \,\colon\, \mu \in \coupling(\mu_1,\mu_2) \right\} \right)^{1/2} \\
				\intertext{where}
				\label{eq:Couplings}
				\coupling(\mu_1,\mu_2) & = \left \{ \mu \in \MeasGen(\R^2 \times \R^2) \, \colon \,
					\project_{1\,\sharp} \mu = \mu_1,\,
					\project_{2\,\sharp} \mu = \mu_2
					\right \}
			\end{align}
			denotes the set of couplings between $\mu_1$ and $\mu_2$. $\project_i$ denotes the projection onto the first and second marginal respectively.
		\end{definition}
		For a proof that $\WD$ in fact satisfies the axioms of a metric as well as a general thorough introduction to the subject of optimal transport see for example \cite{Villani-OptimalTransport-09}.
		
		The minimizing coupling in \eqref{eq:WassersteinDistance} can be thought of as describing how the mass from $\mu_1$ is arranged into $\mu_2$ in the most efficient fashion. The notion of optimal transport, given above, can be seen as somewhat static: only the cost $\|x-y\|^2$ and the final assignment $\mu \in \coupling(\mu_1,\mu_2)$ play a role but not, how the mass in $\mu_1$ `moves' through $\R^2$ to actually reach the distribution of $\mu_2$. The following definitions and statements reveal a more dynamic perspective on optimal transport.
		
		\begin{definition}[Continuity Equation]
			For a measure path $[0,1] \ni t \mapsto \mu_t \in \Meas$ and a flow vector field path $[0,1] \ni t \mapsto \alpha_t \in (L^2(\mu_t))^2$ the continuity equation states that
			\begin{subequations}
			\begin{equation}
				\label{eq:ContinuityEq}
				\frac{d}{dt} \mu_t + \nabla \left( \alpha_t\,\mu_t \right) = 0
			\end{equation}
			in the sense of distributions. That is
			\begin{equation}
				\frac{d}{dt} \int \phi\,d\mu_t = \int \langle \nabla \phi, \alpha_t \rangle \, d\mu_t
			\end{equation}
			\end{subequations}
			for all test functions $\phi \in \testFunction$.
		\end{definition}
		
		\begin{definition}[Absolutely Continuous Paths in Metric Spaces \protect{\cite[Def.~2.28]{Ambrosio2013}}]
			Let $[0,1] \ni t \mapsto x_t \in X$ be a path in a metric space $(X,d)$. $(x_t)$ is said to be absolutely continuous if there exists a function $f \in L^1(0,1)$ such that
			\begin{equation}
				d(x_s,x_t) \leq \int_s^t f(r)\,dr,\qquad \forall s < t \in [0,1]\,.
			\end{equation}
		\end{definition}
		
		On $\Meas$, equipped with the metric $\WD$, there is a more useful characterization of absolutely continuous paths:
		\begin{theorem}[Characterization of Absolutely Continuous Paths in the Space of Measures \protect{\cite[Thm.~2.29]{Ambrosio2013}}]
			A measure path $[0,1] \ni t \mapsto \mu_t$ is absolutely continuous (up to redefinition of $\mu_t$ on a $t$-negligible set) if and only if there exists a flow field path $\alpha_t$ such that $(\mu_t,\alpha_t)$ satisfy the continuity equation \eqref{eq:ContinuityEq} and $\int_0^1 \|\alpha_t\|_{L^2(\mu_t)} dt < \infty$.
		\end{theorem}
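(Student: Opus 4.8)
This is a classical result of Ambrosio--Gigli--Savar\'e, and the two implications have quite different flavours, so I would treat them separately. The \emph{sufficiency} (``if'') direction is the easier one. Assume $(\mu_t,\alpha_t)$ solve the continuity equation with $\int_0^1 \|\alpha_t\|_{L^2(\mu_t)}\,dt < \infty$; the goal is the estimate $\WD(\mu_s,\mu_t) \leq \int_s^t \|\alpha_r\|_{L^2(\mu_r)}\,dr$, which is exactly absolute continuity. I would regularize: fix a smooth mollifier $\rho_\veps$, set $\mu_t^\veps = \mu_t \ast \rho_\veps$, and define $\alpha_t^\veps$ by $\alpha_t^\veps\,\mu_t^\veps = (\alpha_t\,\mu_t)\ast\rho_\veps$. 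One checks that $(\mu_t^\veps,\alpha_t^\veps)$ again solves the continuity equation, but now with smooth, locally bounded data, so the characteristic ODE $\dot X_t = \alpha_t^\veps(X_t)$ has a well-defined flow with $(X_t)_\sharp \mu_s^\veps = \mu_t^\veps$. Using this flow as an admissible coupling and Minkowski's integral inequality gives
\begin{align*}
 \WD(\mu_s^\veps,\mu_t^\veps)^2
  &\leq \int \Big( \int_s^t |\alpha_r^\veps(X_r(x))|\,dr \Big)^2 d\mu_s^\veps(x) \\
  &\leq \Big( \int_s^t \|\alpha_r^\veps\|_{L^2(\mu_r^\veps)}\,dr \Big)^2 ,
\end{align*}
and a convexity estimate for the convolution yields $\|\alpha_r^\veps\|_{L^2(\mu_r^\veps)} \leq \|\alpha_r\|_{L^2(\mu_r)}$. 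Since $\mu_t^\veps$ converges narrowly to $\mu_t$ and $\WD$ is lower semicontinuous under narrow convergence, letting $\veps \to 0$ yields the claimed bound.

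For the \emph{necessity} (``only if'') direction --- the harder part --- assume $(\mu_t)$ is absolutely continuous; then the metric derivative $|\mu'|(r) = \lim_{h\to 0}\WD(\mu_{r+h},\mu_r)/|h|$ exists for a.e.\ $r$ and lies in $L^1(0,1)$, and $(\mu_t)$ admits a (unique) continuous-in-time representative, which disposes of the ``redefinition on a $t$-negligible set'' clause. The plan is to reconstruct a velocity field as a limit of finite-difference velocities built from optimal plans: for a partition of $[0,1]$ of mesh $\delta$ pick optimal couplings $\gamma^\delta$ between consecutive measures, rescale displacements $y-x$ by the time step to form phase-space measures $\nu_t^\delta$ on $\R^2\times\R^2$ (positions and velocities) that approximately solve the continuity equation tested against $\testFunction$, with the velocity second moment controlled by $\int_0^1 |\mu'|(r)^2\,dr$. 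That bound gives tightness of $\{\nu_t^\delta\}$; extracting a narrow limit $\nu_t$ and disintegrating with respect to the position variable produces a vector field $\tilde\alpha_t$ solving \eqref{eq:ContinuityEq} with $\int_0^1 \|\tilde\alpha_t\|_{L^2(\mu_t)}\,dt \leq \int_0^1 |\mu'|(r)\,dr$. Finally one replaces $\tilde\alpha_t$ by its pointwise $L^2(\mu_t)$-orthogonal projection $\alpha_t$ onto the closure of $\{\nabla\phi : \phi\in\testFunction\}$; this does not change the action on test functions, so the continuity equation still holds, and it can only decrease the norm.

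\textbf{Main obstacle.} The delicate points lie entirely on the necessity side: proving tightness of the phase-space measures $\nu_t^\delta$ uniformly in the partition, and then verifying that their narrow limit genuinely satisfies the continuity equation in the distributional sense rather than a weakened version --- this is where the second-moment control coming from the \emph{squared} metric derivative is essential, as it prevents the approximate velocities from concentrating or escaping to infinity. The projection step additionally needs the fact that the minimal-$L^2(\mu_t)$-norm solution of the continuity equation is precisely the gradient part in a Hodge-type splitting of $L^2(\mu_t;\R^2)$. Since the complete construction is carried out in detail in \cite[Chap.~8]{Ambrosio2013} and summarized in \cite[Thm.~2.29]{Ambrosio2013}, I would cite it there rather than reproduce the whole argument.
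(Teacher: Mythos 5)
Your proposal is correct and matches the paper's treatment: the paper states this as a background result and defers entirely to \cite[Thm.~2.29]{Ambrosio2013}, and your sketch is a faithful outline of exactly that Ambrosio--Gigli--Savar\'e argument (mollification plus characteristics for sufficiency, discrete velocities from optimal plans plus tightness and projection onto the gradient closure for necessity). Citing the reference rather than reproducing the full construction is precisely what the paper does.
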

		
		\begin{remark}
			In this article the attribute of absolute continuity has been discussed in the context of measures and of paths of measures. They need to be carefully distinguished.
		\end{remark}
		
		This notion of absolutely continuous paths allows for an alternative variational formulation of the Wasserstein distance.
		
		\begin{proposition}[Benamou-Brenier formula \protect{\cite[Proposition 2.30]{Ambrosio2013}}]
			Let $\mu_0,\mu_1 \in \Meas$. Then it holds
			\begin{equation}
				\label{eq:BenamouBrenier}
				\WD(\mu_0,\mu_1) = \inf \left \{ \int_0^1 \|\alpha_t \|_{L^2(\mu_t)} dt \right \}\,,
			\end{equation}			
			where the infimum is taken among all weakly continuous distributional solutions of the continuity equation $(\mu_t,\alpha_t)$ such that $\mu_{t=0}=\mu_0$ and $\mu_{t=1}=\mu_1$.
		\end{proposition}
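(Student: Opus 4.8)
The plan is to establish the two inequalities $\WD(\mu_0,\mu_1) \le I$ and $\WD(\mu_0,\mu_1) \ge I$, where $I$ denotes the infimum on the right-hand side of \eqref{eq:BenamouBrenier} and $\mc{A} = \int_0^1 \|\alpha_t\|_{L^2(\mu_t)}\,dt$ abbreviates the action of a given admissible pair $(\mu_t,\alpha_t)$.

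For $\WD(\mu_0,\mu_1) \le I$ I would first treat a velocity field that is, say, uniformly Lipschitz in space with $\int_0^1 \mathrm{Lip}(\alpha_t)\,dt < \infty$. Then the non-autonomous ODE $\dot X_t(x) = \alpha_t(X_t(x))$, $X_0 = \id$, has a unique flow $X_t$, and by uniqueness for the continuity equation under this regularity one has $\mu_t = (X_t)_\sharp \mu_0$; in particular $(X_0, X_1)_\sharp \mu_0$ is a coupling of $\mu_0$ and $\mu_1$, so
\[
\WD(\mu_0,\mu_1)^2 \le \int_{\R^2} \|X_1(x) - X_0(x)\|^2\,d\mu_0(x) = \int_{\R^2} \Big\| \int_0^1 \alpha_t(X_t(x))\,dt \Big\|^2 d\mu_0(x)\,,
\]
and Minkowski's integral inequality together with the identity $\int_{\R^2} \|\alpha_t(X_t(x))\|^2 d\mu_0(x) = \|\alpha_t\|_{L^2(\mu_t)}^2$ yields $\WD(\mu_0,\mu_1) \le \mc{A}$. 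For a general admissible pair I would pass to the mollified pair $\mu_t^\varepsilon = \mu_t \ast \rho_\varepsilon$, $\alpha_t^\varepsilon = \big((\alpha_t \mu_t)\ast\rho_\varepsilon\big)/\mu_t^\varepsilon$, which is again admissible, satisfies $\|\alpha_t^\varepsilon\|_{L^2(\mu_t^\varepsilon)} \le \|\alpha_t\|_{L^2(\mu_t)}$ by Jensen's inequality, and whose endpoints converge to $\mu_0, \mu_1$; lower semicontinuity of $\WD$ then transfers the bound to the limit. Taking the infimum over admissible pairs gives $\WD(\mu_0,\mu_1) \le I$.

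For $\WD(\mu_0,\mu_1) \ge I$ I would exhibit one admissible pair of action exactly $\WD(\mu_0,\mu_1)$, namely McCann's displacement interpolation. Since all measures in $\Meas$ are assumed absolutely continuous, Brenier's theorem provides a convex $\psi$ with $T := \nabla\psi$ satisfying $T_\sharp\mu_0 = \mu_1$ and $\int_{\R^2}\|x - T(x)\|^2 d\mu_0(x) = \WD(\mu_0,\mu_1)^2$. Setting $X_t = (1-t)\,\id + t\,T = \nabla\big((1-t)\tfrac{\|\cdot\|^2}{2} + t\psi\big)$ and $\mu_t = (X_t)_\sharp\mu_0$, the map $X_t$ is the gradient of a strictly convex function for $t \in [0,1)$, hence $\mu_0$-a.e.\ injective, so $\alpha_t := (T - \id)\circ X_t^{-1}$ is a well-defined element of $L^2(\mu_t)$. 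Since $\tfrac{d}{dt}X_t(x) = T(x) - x = \alpha_t(X_t(x))$, a change of variables in $\tfrac{d}{dt}\int\phi\,d\mu_t$ shows $(\mu_t,\alpha_t)$ solves the continuity equation, and narrow continuity of $t\mapsto\mu_t$ follows from pointwise convergence of $X_t$ and the uniform second-moment bound. Finally $\|\alpha_t\|_{L^2(\mu_t)}^2 = \int_{\R^2}\|T(x)-x\|^2 d\mu_0(x) = \WD(\mu_0,\mu_1)^2$ for every $t$, so this pair has action $\WD(\mu_0,\mu_1)$ and therefore $I \le \WD(\mu_0,\mu_1)$.

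The routine parts are the two displayed computations. The main obstacle in the first inequality is the regularization step: one must check that the mollified fields are genuinely admissible, that their action does not increase in the limit, and that no mass escapes to infinity so that the endpoints converge in $\WD$ (compactly supported mollifiers and the finite-second-moment assumption handle this, but it needs care with the time dependence and with the degeneracy of $\mu_t^\varepsilon$ near the boundary of its support). The main obstacle in the second inequality is the appeal to Brenier's theorem for the existence and $\mu_0$-a.e.\ invertibility of $T$ and of $X_t$, and the verification that $t\mapsto\alpha_t$ is measurable with $\int_0^1\|\alpha_t\|_{L^2(\mu_t)}\,dt < \infty$; granting these classical facts, the remainder is bookkeeping. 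If instead one is willing to quote the absolutely-continuous-path characterization above in the sharper form that also provides the metric speed bound $|\mu_t'| \le \|\alpha_t\|_{L^2(\mu_t)}$, the first inequality reduces to $\WD(\mu_0,\mu_1) \le \int_0^1 |\mu_t'|\,dt \le \mc{A}$ and only the displacement-interpolation construction remains.
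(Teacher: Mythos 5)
Your sketch is sound, but note that the paper itself offers no proof of this proposition: it is imported verbatim from the cited reference (Ambrosio--Gigli, Prop.~2.30), where it is obtained almost immediately from the characterization of absolutely continuous curves that the paper also quotes just above (the statement cited as Thm.~2.29). That route is exactly the shortcut you mention in your last sentence: the characterization gives $\WD(\mu_0,\mu_1)\le\int_0^1|\mu_t'|\,dt\le\int_0^1\|\alpha_t\|_{L^2(\mu_t)}\,dt$ for any admissible pair, and conversely the constant-speed geodesic (equivalently, McCann's displacement interpolation) together with the existence part of that theorem supplies an admissible pair of action exactly $\WD(\mu_0,\mu_1)$. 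What your longer, self-contained argument buys is independence from that theorem, at the price of re-doing its hardest step: the mollification stage. There the genuine technical crux is not admissibility or the Jensen bound $\|\alpha_t^\varepsilon\|_{L^2(\mu_t^\varepsilon)}\le\|\alpha_t\|_{L^2(\mu_t)}$ (both are fine), but the fact that $\alpha_t^\varepsilon=\big((\alpha_t\mu_t)\ast\rho_\varepsilon\big)/\mu_t^\varepsilon$ need not have a time-integrable Lipschitz constant where $\mu_t^\varepsilon$ degenerates, so your ``Lipschitz case'' does not apply to it directly; the standard fix (e.g.\ a Gaussian kernel giving local Lipschitz bounds and at most linear growth, plus a localization to rule out escape of mass) must be carried out, and you only flag this rather than resolve it. Two smaller points: your use of Brenier's theorem is legitimate here only because the paper globally assumes absolute continuity of all measures (for general $\mu_0\in\Meas$ one interpolates an optimal plan instead), and the injectivity of $X_t=\nabla\big((1-t)\tfrac{\|\cdot\|^2}{2}+t\psi\big)$ fails at $t=1$, which is harmless since a velocity field is only needed for a.e.\ $t$ (or by symmetry in the endpoints) but should be said. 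With those caveats your proposal is a correct outline of the classical Benamou--Brenier argument, genuinely different from (and heavier than) the one-line derivation the paper's own background section sets up.
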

		
		The path of vector fields that a certain measure path satisfies the continuity equation with is in general not unique: if $\nabla \cdot (\beta_t\,\mu_t) = 0$ in the distributional sense for a.e.~$t$, then the pair $(\mu_t,\alpha_t + \beta_t)$ will satisfy the continuity equation if $(\mu_t,\alpha_t)$ does. The minimization in \eqref{eq:BenamouBrenier} however gives a natural way to select a distinct vector field. It turns out that minimizers of \eqref{eq:BenamouBrenier} for a.e.~$t$ lie in a particular subspace of vector fields, to be specified next.
		
		\begin{definition}[The Tangent Space \protect{\cite[Def.~2.31]{Ambrosio2013}}]
			\label{def:TangentSpace}
			Let $\mu \in \Meas$. Then the tangent space $\Tan(\mu)$ at $\mu$ is defined as
			\begin{align}
				\Tan(\mu) = & \overline{ \left \{
					\nabla u \, \colon \, u \in \testFunction
					\right \} }^{L^2(\mu)}\,.
			\end{align}
		\end{definition}
		\begin{remark}
			\label{rem:TangentNotion}
		The term `tangent' stems from the notion, that some $\alpha \in \Tan(\mu)$ can describe small deformations of $\mu$ of the form $t \mapsto \mu_t = (\id+t \cdot \alpha)_\sharp \mu$ such that $\mu_t$ and $\alpha$ satisfy the continuity equation at $t=0$.
			$\Tan(\mu)$ is not so much to be thought of as set of tangent vectors directly but more as set of representatives of them.
			We will refer to functions $u$, whose gradients represent tangent vectors, as \emph{potential functions}.
		\end{remark}

		The expression $\|\alpha_t \|_{L^2(\mu_t)}$ can be interpreted as the (pseudo-)norm of $\alpha_t$, induced by the following inner product on $\Tan(\mu)$:
		\begin{equation}
			\label{eq:WassersteinRiemannInnerProduct}
			\langle \alpha_1, \alpha_2 \rangle_{L^2(\mu)} = \int \langle \alpha_1, \alpha_2 \rangle_{\R^2}\,d\mu \qquad \textnormal{for } \alpha_1, \alpha_2 \in \Tan(\mu)\,.
		\end{equation}
		So $\WD$ appears in fact to behave similar to a Riemannian metric on $\Meas$ with the Riemannian inner product given by \eqref{eq:WassersteinRiemannInnerProduct}: the distance between two elements is given by the length of the shortest path between them and path length is measured by integrating along the path the norm of the tangent vectors w.r.t.~a local tangent metric.
		In \cite[Sect.~2.3.2]{Ambrosio2013} further results are given that motivate to consider the viewpoint of $\Meas$, metrized by $\WD$ as a Riemannian manifold.
		
		The set of absolutely continuous probability measures with smooth density functions is viewed in \cite{LottWassersteinRiemannian2008} as a manifold in the precise sense of \cite{MichorGlobalAnalysis}. Expressions for typical notions in differential geometry, such as the Levi-Civita connection, parallel transport or the geodesic equation are derived.
		It is however a very tedious task to extend these results in formally rigorous way to less smooth settings.
		
		\label{sec:BackgroundWassersteinManifold}
		We will now recall some of the results from \cite{LottWassersteinRiemannian2008} for the particular case of optimal transport on a compact subset of $\R^2$. Here, we will not explicitly denote limitation but simply assume that we are on some compact subset, but the set is large enough such that for all our purposes it looks like the whole $\R^2$. We consider the set $\Meas^\infty$ of measures that are absolutely continuous and have a smooth density function. Hence, concepts like the continuity equation can be expressed directly in terms of flow-fields and density functions, and one need not fall back on a distributional formulation.
		
		The tangent space at a point $\mu \in \Meas^\infty$ is isomorphic to $\{ \nabla u : u \in C^\infty(\R^2) \}$, where due to the additional smoothness, in contrast to Definition \ref{def:TangentSpace} we do not consider the $L^2$-completion. Any element in the tangent space describes a local deformation of the footpoint measure as discussed in Remark \ref{rem:TangentNotion}.
		More generally, a function $u \in \testFunction$ and its gradient not only represent a tangent vector at one footpoint $\mu$, one can think of them as representing a tangent vector at any footpoint on $\Meas^\infty$, i.e.~a vector field \cite[Sect.~2]{LottWassersteinRiemannian2008}. One then finds:
		
		\begin{proposition}[Covariant Derivative on $\Meas^\infty$ \protect{\cite[Sect.~2]{LottWassersteinRiemannian2008}}]
			\thlabel{thm:WassersteinCovariantDerivative}
			Let $\nabla u_1, \nabla u_2$ represent two vector fields on $\Meas^\infty$. Then the Levi-Civita covariant derivative $\ol{\nabla}_{u_1} u_2$ of $u_2$ w.r.t.~$u_1$ is given by
			\begin{align}
				\label{eq:WassersteinCovariantDerivative}
				(\ol{\nabla}_{u_1} u_2)_i & = \sum_{j=1}^2 (\partial_j u_1) (\partial_i \partial_j u_2)\,.
			\end{align}
		\end{proposition}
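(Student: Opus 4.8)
The plan is to recognise the right-hand side of \eqref{eq:WassersteinCovariantDerivative} as the Levi-Civita connection by verifying that it is torsion-free and compatible with the metric \eqref{eq:WassersteinRiemannInnerProduct}, and then to invoke uniqueness of the Levi-Civita connection --- its existence and uniqueness for this (weak) Riemannian metric on $\Meas^\infty$ being part of the framework recalled from \cite{LottWassersteinRiemannian2008,MichorGlobalAnalysis}. Write $w := \ol{\nabla}_{u_1} u_2$, i.e.\ $w_i = \sum_j (\partial_j u_1)(\partial_i \partial_j u_2)$: this is the vector field obtained by applying the spatial Hessian of $u_2$ to the gradient field $\nabla u_1$.

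First I would record how the vector field represented by a fixed potential acts on functions over $\Meas^\infty$. If $\mu_t$ is a path through $\mu$ that satisfies the continuity equation \eqref{eq:ContinuityEq} with velocity $\nabla u_1$, then for the evaluation functionals $\Phi_\phi(\mu) = \int \phi\,d\mu$ with $\phi \in C^\infty(\R^2)$ one gets $(\nabla u_1)[\Phi_\phi](\mu) = \int \langle \nabla\phi, \nabla u_1\rangle_{\R^2}\,d\mu$ directly from \eqref{eq:ContinuityEq}. Taking $\phi = \langle\nabla u_2,\nabla u_3\rangle_{\R^2} = \sum_j (\partial_j u_2)(\partial_j u_3)$ gives the directional derivative of the metric coefficient $\mu \mapsto \langle \nabla u_2,\nabla u_3\rangle_{L^2(\mu)}$ along $\nabla u_1$, namely $\int \sum_{i,j}(\partial_i u_1)\big[(\partial_i\partial_j u_2)(\partial_j u_3) + (\partial_j u_2)(\partial_i\partial_j u_3)\big]\,d\mu$. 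Secondly, since the potentials do not depend on the footpoint, transporting $\nabla u_2$ along the flow of $\nabla u_1$ only carries its values around in $\R^2$, so the Lie bracket of these vector fields coincides with the Euclidean bracket of the underlying gradient fields; its $i$-th component is $\sum_j\big[(\partial_j u_1)(\partial_i\partial_j u_2) - (\partial_j u_2)(\partial_i\partial_j u_1)\big]$.

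The verification is then a short index computation. Torsion-freeness is immediate, since $\ol{\nabla}_{u_1}u_2 - \ol{\nabla}_{u_2}u_1$ has $i$-th component $\sum_j(\partial_j u_1)(\partial_i\partial_j u_2) - \sum_j(\partial_j u_2)(\partial_i\partial_j u_1)$, which is exactly $[\nabla u_1,\nabla u_2]_i$. For metric compatibility one checks, using symmetry of the Hessians and relabelling of summation indices, that $\langle w,\nabla u_3\rangle_{L^2(\mu)} + \langle \nabla u_2, \ol{\nabla}_{u_1} u_3\rangle_{L^2(\mu)}$ reproduces the directional derivative of $\langle\nabla u_2,\nabla u_3\rangle_{L^2(\mu)}$ computed above; the integrations by parts hidden in \eqref{eq:ContinuityEq} are legitimate because the densities are smooth on the compact domain. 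Equivalently, substituting the two ingredients into Koszul's formula $2\langle\ol{\nabla}_X Y, Z\rangle = X\langle Y,Z\rangle + Y\langle X,Z\rangle - Z\langle X,Y\rangle + \langle[X,Y],Z\rangle - \langle[X,Z],Y\rangle - \langle[Y,Z],X\rangle$ with $X = \nabla u_1$, $Y = \nabla u_2$, $Z = \nabla u_3$ makes the second-derivative terms in $u_1$ and in $u_3$ cancel in pairs, while those in $u_2$ combine to $2\int\sum_{i,j}(\partial_j u_1)(\partial_i\partial_j u_2)(\partial_i u_3)\,d\mu = 2\langle w, \nabla u_3\rangle_{L^2(\mu)}$. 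As $\{\nabla u_3 : u_3 \in \testFunction\}$ is dense in $\Tan(\mu)$, this pins down $\ol{\nabla}_{u_1} u_2$, with the usual caveat (Remark \ref{rem:TangentNotion}) that it is to be read as a representative in the space of gradient fields.

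The one genuinely delicate point is conceptual rather than computational: one has to work within the formal infinite-dimensional calculus on $\Meas^\infty$ --- that a fixed potential defines a vector field, that its action on functions and the bracket are as used above, and that the Levi-Civita connection exists and is unique for this weak metric --- all of which is taken over from \cite{LottWassersteinRiemannian2008} (see also \cite{MichorGlobalAnalysis,Ambrosio2013}). Granting this, the argument is entirely the bookkeeping sketched above.
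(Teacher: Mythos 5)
The paper offers no proof of this proposition at all: it is quoted directly from \cite[Sect.~2]{LottWassersteinRiemannian2008}, so there is no internal argument to compare against. Your reconstruction via torsion-freeness plus metric compatibility (equivalently the Koszul formula) is essentially the standard derivation in Lott's paper, and the computations you sketch are right at the formal level this section operates on: the bracket of two constant-potential vector fields, tested on cylinder functions $\mu \mapsto \int\phi\,d\mu$, does reduce to the Euclidean bracket of $\nabla u_1,\nabla u_2$, and the index bookkeeping for compatibility with \eqref{eq:WassersteinRiemannInnerProduct} closes without any integration by parts beyond what the continuity equation already encodes.

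One point should be stated more carefully. The field $w$ with $w_i=\sum_j(\partial_j u_1)(\partial_i\partial_j u_2)$ is in general \emph{not} a gradient, hence not an element of $\Tan(\mu)$ as the paper defines it. Your density argument over test potentials $u_3$ therefore pins down only the $L^2(\mu)$-orthogonal projection of $w$ onto $\Tan(\mu)$; it is that projection which a tangent-valued Levi-Civita connection must equal, and this is what Lott actually proves. The caveat you invoke (Remark \ref{rem:TangentNotion}) concerns the additive constant in the potential, not this projection issue, so it does not cover the gap between $w$ and its tangential part. In practice the discrepancy is harmless for the paper: the formula is used only with $u_1=u_2=u_t$ in \eqref{eq:WassersteinCovariantDerivativePotential}, where $w=\tfrac12\nabla\|\nabla u_t\|^2$ is already a gradient, and the projection reappears explicitly in \eqref{eq:CovariantDerivativeProjection} when passing to $\STan(\mu)$. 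So your proof is correct provided you read \eqref{eq:WassersteinCovariantDerivative} as the ambient (push-along-the-flow) derivative whose tangential projection is the Levi-Civita derivative --- say that explicitly rather than leaving it to the representative caveat.
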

		This can be interpreted as the change that the flow field $\nabla u_2$ exhibits when being pushed along the flow induced by flow-field $\nabla u_1$.
		
		This covariant derivative only applies to vector fields on $\Meas^\infty$ for which the vector at any footpoint is represented by the gradient of the same static function $u_2$. In general, this function can also change throughout $\Meas^\infty$. Let $\mu_t$ be a path in $\Meas^\infty$ with tangent vector $\nabla u_t$ at time $t$ and let $\nabla w_t$ be a vector field on the path $\mu_t$ with potential function $w_t$ at footpoint $\mu_t$. Then the covariant derivative of $\nabla w_t$ at $\mu_t$ w.r.t.~$\nabla u_t$ is given by
		\begin{align}
			\label{eq:WassersteinCovariantDerivativeDT}
			(\ol{\nabla}_{u_t} w_t)_i & = \sum_{j=1}^2 (\partial_j u_t) (\partial_i \partial_j w_t) + \partial_t \partial_i w_t \,.
		\end{align}
		This is the change of the vector field as given by \thref{thm:WassersteinCovariantDerivative}, plus the change induced by the change of the potential function $w_t$ along the path.
		
		Since a vector $\nabla u_t$ is specified by the unique potential $u_t$, up to a constant, one can for $w_t = u_t$, by suitably fixing this constant, express the covariant derivative directly in terms of the potential function:
		\begin{align}
			\label{eq:WassersteinCovariantDerivativePotential}
			(\ol{\nabla}_{u_t} u_t) & = \nabla \left( \frac{1}{2} \|\nabla u_t\|^2 + \partial_t u_t \right) \,.
		\end{align}
		Setting this, the covariant derivative of $u_t$ along itself, to zero, one finds the geodesic equation on $\Meas^\infty$ in terms of the potential function $u_t$:
		
		\begin{proposition}[\protect{\cite[Prop.~4]{LottWassersteinRiemannian2008}}]
			The geodesic equation on $\Meas^\infty$ in terms of the potential function $u_t$ is given by
			\begin{align}
				\label{eq:WassersteinGeodesicEquation}
				\partial_t u_t +\frac{1}{2}\| \nabla u_t \|^2 = 0\,.
			\end{align}
		\end{proposition}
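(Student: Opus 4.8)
The plan is to read the geodesic equation off directly from the expression \eqref{eq:WassersteinCovariantDerivativePotential} for the covariant derivative of the velocity field along itself, using only the defining property of a geodesic. Recall that a path $\mu_t$ in $\Meas^\infty$ is a geodesic precisely when it is autoparallel, i.e.\ its velocity vector field is parallel-transported along the path. With the velocity at $\mu_t$ represented by the gradient $\nabla u_t$ of the potential $u_t$, this condition is $\ol{\nabla}_{u_t} u_t = 0$ as an element of the tangent space $\Tan(\mu_t)$.

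First I would invoke \eqref{eq:WassersteinCovariantDerivativePotential}, which was obtained from \thref{thm:WassersteinCovariantDerivative} and \eqref{eq:WassersteinCovariantDerivativeDT}, to rewrite the autoparallel condition as
\begin{equation*}
	\nabla\left( \tfrac{1}{2}\|\nabla u_t\|^2 + \partial_t u_t \right) = 0
\end{equation*}
on the (connected) domain. Hence the scalar function $\tfrac{1}{2}\|\nabla u_t\|^2 + \partial_t u_t$ does not depend on the spatial variable; it equals some function $c(t)$ of time alone.

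It then remains to absorb $c(t)$ into the potential. Since $\nabla u_t$ — and therefore the velocity vector, the inner product \eqref{eq:WassersteinRiemannInnerProduct}, and the covariant derivative — is unchanged if $u_t$ is modified by an additive function of $t$ only, I would replace $u_t$ by $\tilde u_t = u_t - \int_0^t c(s)\,ds$. This leaves $\nabla\tilde u_t = \nabla u_t$ while $\partial_t \tilde u_t = \partial_t u_t - c(t)$, so that $\partial_t \tilde u_t + \tfrac{1}{2}\|\nabla\tilde u_t\|^2 = 0$. Dropping the tilde yields \eqref{eq:WassersteinGeodesicEquation}. Conversely, any $u_t$ solving \eqref{eq:WassersteinGeodesicEquation} gives, via the continuity equation, an honest geodesic, since then $\ol{\nabla}_{u_t} u_t = \nabla 0 = 0$.

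The argument is short and the only point requiring care is precisely this gauge choice: the geodesic equation for the potential is meaningful only modulo a time-dependent additive constant, and \eqref{eq:WassersteinGeodesicEquation} is the representative obtained by fixing that freedom so that the spatial constant in $\tfrac12\|\nabla u_t\|^2 + \partial_t u_t$ vanishes — the same normalization already used in passing to \eqref{eq:WassersteinCovariantDerivativePotential}. I do not expect a genuine obstacle here; the substance of the statement is entirely contained in \thref{thm:WassersteinCovariantDerivative} and its consequence \eqref{eq:WassersteinCovariantDerivativePotential}.
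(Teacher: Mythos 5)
Your argument is correct and follows essentially the same route as the paper: the paper also obtains \eqref{eq:WassersteinGeodesicEquation} by setting the covariant derivative \eqref{eq:WassersteinCovariantDerivativePotential} of $u_t$ along itself to zero, with the time-dependent additive constant of the potential fixed beforehand (the remark preceding \eqref{eq:WassersteinCovariantDerivativePotential}), whereas you absorb it afterwards — an immaterial difference. The statement is in any case cited from \cite[Prop.~4]{LottWassersteinRiemannian2008}, so no further proof is given or needed in the paper.
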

		
		\begin{remark}
			\label{rem:GeodesicGenerality}
			This geodesic equation is in fact known to be satisfied in a more general setting as $\Meas^\infty$. For absolutely continuous measures it is also satisfied by minimizers to \eqref{eq:BenamouBrenier}.
		\end{remark}
		
	\subsection{Poisson's Equation}
		In Section \ref{sec:MathContrib} a map from the tangent space $T_c M$ of deformations of some contour $c$ onto the tangent space $\Tan(\mu)$ of a suitable measure $\mu$ will be defined through solutions to Poisson's equation with appropriate data terms. For the analysis of this map general facts on existence, uniqueness and smoothness properties of solutions to Poisson's equation will be used.

		We assume for now that $\region \subset \R^2$ is a simply connected, bounded, open set with a Lipschitz-continuous boundary.
		
		\begin{lemma}[\protect{\cite[Thm.~2.5]{Girault-Raviart-86}}]
			\label{thm:ProjectionNormalComponent}
			The operator $\deliftPoint \colon C^{\infty}(\ol{\region},\R^2) \to C^{\infty}(\partial\region)$ mapping a vector to its normal component on the boundary, can be continuously extended to an operator $\deliftPointSobolev \colon H(\ddiv;\region) \to H^{-1/2}(\partial\region)$.
		\end{lemma}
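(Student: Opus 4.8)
The plan is to construct $\deliftPointSobolev$ by duality, extending the classical Green formula. For $u \in C^{\infty}(\ol{\region},\R^2)$ and $\varphi \in C^{\infty}(\ol{\region})$ the divergence theorem gives
\[
	\int_{\partial\region} (\deliftPoint u)\,\varphi \, ds \;=\; \int_{\region} (\ddiv u)\,\varphi \, dx \;+\; \int_{\region} u \cdot \nabla \varphi \, dx \,.
\]
The right-hand side is a bilinear form in $(u,\varphi)$ that makes sense for every $u \in H(\ddiv;\region)$ and $\varphi \in H^1(\region)$, and by Cauchy--Schwarz it is bounded by $\|u\|_{\ddiv;\region}\,\|\varphi\|_{H^1(\region)}$. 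Crucially, if $\varphi \in C_0^{\infty}(\region)$ the right-hand side vanishes by the very definition of the distributional divergence; by density this persists for all $\varphi \in H_0^1(\region)$. Hence the value of the right-hand side depends only on the boundary trace $\varphi|_{\partial\region}$.

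First I would invoke the standard trace theorem on the Lipschitz domain $\region$: the trace operator $H^1(\region) \to H^{1/2}(\partial\region)$ is bounded and surjective and admits a bounded linear right inverse. Picking, for $g \in H^{1/2}(\partial\region)$, any extension $\varphi_g \in H^1(\region)$ with $\varphi_g|_{\partial\region} = g$ and $\|\varphi_g\|_{H^1(\region)} \leq C\,\|g\|_{H^{1/2}(\partial\region)}$, I set
\[
	\big\langle \deliftPointSobolev u,\, g \big\rangle \;:=\; \int_{\region} (\ddiv u)\,\varphi_g \, dx \;+\; \int_{\region} u \cdot \nabla \varphi_g \, dx \,.
\]
By the observation above this is independent of the chosen extension $\varphi_g$, so $\deliftPointSobolev u$ is a well-defined linear functional on $H^{1/2}(\partial\region)$, and the bound on the bilinear form yields $\|\deliftPointSobolev u\|_{H^{-1/2}(\partial\region)} \leq C\,\|u\|_{\ddiv;\region}$. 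Thus $\deliftPointSobolev \colon H(\ddiv;\region) \to H^{-1/2}(\partial\region)$ is bounded and linear.

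It remains to check that $\deliftPointSobolev$ genuinely extends $\deliftPoint$. For $u \in C^{\infty}(\ol{\region},\R^2)$ the classical Green formula shows $\langle \deliftPointSobolev u, g\rangle = \int_{\partial\region}(\deliftPoint u)\,g\,ds$ for every $g \in C^{\infty}(\partial\region)$, hence $\deliftPointSobolev u = \deliftPoint u$ as elements of $H^{-1/2}(\partial\region)$. Since $C^{\infty}(\ol{\region},\R^2)$ is dense in $H(\ddiv;\region)$, $\deliftPointSobolev$ is the unique bounded extension of $\deliftPoint$. I expect no genuine obstacle here: the only nontrivial ingredients are the $H^1$-trace theorem with a bounded right inverse (this is where the Lipschitz regularity of $\partial\region$ enters) and the density of smooth vector fields in $H(\ddiv;\region)$, both of which are classical and may simply be quoted; the remaining work is the bookkeeping above and the verification of compatibility with the classical normal-component map.
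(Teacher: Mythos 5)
Your proof is correct: the paper does not prove this lemma itself but quotes it from Girault--Raviart (Thm.~2.5), and your argument --- Green's formula, vanishing on $H_0^1(\region)$, the $H^1$-trace theorem with bounded right inverse, and density of smooth fields in $H(\ddiv;\region)$ --- is precisely the classical duality proof given in that reference. No gaps; the implicit use of $H_0^1(\region)=\ker(\text{trace})$ on a Lipschitz domain is standard and fine to quote.
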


		The following Green's formula holds:
		\begin{equation}
			\label{eq:Green-Hdiv}
			\int_{\region} u \cdot (\nabla q)\,dx + \int_{\region} (\ddiv u) q\,dx
				= \int_{\partial\region} (u \cdot n) q\,ds, \qquad
				u \in H(\ddiv;\region),\quad q \in H^{1}(\region),
		\end{equation}
		where the integral on the r.h.s.~is understood as the duality pairing between $H^{-1/2}(\partial\region)$ and $H^{1/2}(\region)$.

		The following Lemma can be deduced from the basic theory of elliptic PDEs \cite[Prop.~1.2, Cor.~2.7]{Girault-Raviart-86}.
		\begin{lemma}
			\thlabel{thm:NeumannSobolev}
			Let $g \in H^{-1/2}(\partial\region)$ be given. Then the mapping $\liftPointSobolev \colon H^{-1/2}(\partial \region) \to H(\ddiv;\region)$ given by $\alpha = \liftPointSobolev(g) = \nabla u$, where $u$ is up to a constant the unique solution to the Neumann problem
		\begin{equation} \label{eq:LiftingSobolev}
		\Delta u = f \;\; \text{in}\;\region,\qquad
		\frac{\partial u}{\partial n} = g \;\;\text{on}\;\partial\region,\qquad
		f = \frac{1}{|\region|} \int_{\partial\region} g ds,
		\end{equation}
		maps $g$ to the unique vector field $\alpha$ with constant divergence $\ddiv \alpha = f$ and normal component $\deliftPointSobolev(\alpha) = n \cdot \alpha = g$.
		\end{lemma}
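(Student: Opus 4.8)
The plan is to read everything off the weak (variational) formulation of the Neumann problem \eqref{eq:LiftingSobolev}, combined with the Green's formula \eqref{eq:Green-Hdiv} and the normal-trace operator of Lemma~\ref{thm:ProjectionNormalComponent}; all of the underlying elliptic facts are exactly the ones packaged in \cite[Prop.~1.2, Cor.~2.7]{Girault-Raviart-86}. First I would set up the weak form: seek $[u] \in H^1(\region)/\R$ with
\[
	\int_\region \nabla u \cdot \nabla v \, dx = \langle g, v \rangle_{\partial\region} - \int_\region f\,v\,dx \qquad \text{for all } v \in H^1(\region),
\]
where $\langle g, v\rangle_{\partial\region}$ is the $H^{-1/2}$--$H^{1/2}$ duality pairing applied to the (surjective) trace of $v$. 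Taking $v \equiv 1$ shows that the right-hand side descends to a well-defined bounded functional on the quotient exactly when $f\,|\region| = \int_{\partial\region} g\,ds$, which is precisely how $f$ is defined in \eqref{eq:LiftingSobolev}. The bilinear form is coercive on $H^1(\region)/\R$ by the Poincar\'e--Wirtinger inequality (valid since $\region$ is bounded, connected and Lipschitz), so Lax--Milgram yields a unique $[u]$, hence a uniquely determined $\nabla u$; in particular $\liftPointSobolev$ is single-valued, and the Lax--Milgram estimate makes it bounded.

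Next I would check the two structural properties. Since $u \in H^1(\region)$, the field $\alpha = \nabla u$ lies in $L^2(\region)^2$; testing the weak form against $\phi \in \testFunction$ supported in $\region$ kills the boundary term and gives $\int_\region \nabla u \cdot \nabla \phi = -\int_\region f\,\phi$, i.e. $\ddiv \alpha = f$ in the distributional sense. As $f$ is constant this puts $\alpha \in H(\ddiv;\region)$, so that $\deliftPointSobolev(\alpha)$ is well-defined by Lemma~\ref{thm:ProjectionNormalComponent}. For the normal component, apply Green's formula \eqref{eq:Green-Hdiv} with vector field $\alpha$ and $q = v \in H^1(\region)$:
\[
	\int_\region \alpha \cdot \nabla v \, dx + \int_\region (\ddiv \alpha)\, v\,dx = \langle \deliftPointSobolev(\alpha), v \rangle_{\partial\region}.
\]
Substituting $\ddiv \alpha = f$ and then the weak form turns the left-hand side into $\langle g, v\rangle_{\partial\region}$ for every $v \in H^1(\region)$; since the trace map onto $H^{1/2}(\partial\region)$ is surjective, this forces $\deliftPointSobolev(\alpha) = g$ in $H^{-1/2}(\partial\region)$.

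For uniqueness, suppose $\nabla u_1$ and $\nabla u_2$ are both gradient fields with constant divergence $f$ and normal trace $g$. Then $w = u_1 - u_2$ is harmonic with vanishing Neumann data, hence constant by the uniqueness part above, so $\nabla u_1 = \nabla u_2$; this is the sense in which the construction delivers ``the unique'' such $\alpha$ (one cannot expect uniqueness within all of $H(\ddiv;\region)$, since an arbitrary divergence-free field with zero normal trace could be added). The only point I expect to require genuine care is the bookkeeping of the boundary duality pairings in the first and third steps: one must consistently read $\langle g, v\rangle_{\partial\region}$ as the pairing of $g \in H^{-1/2}(\partial\region)$ with the trace of $v$, and then invoke surjectivity of the trace operator to pass from ``$\langle \deliftPointSobolev(\alpha) - g,\,v\rangle_{\partial\region} = 0$ for all $v \in H^1(\region)$'' to the identity $\deliftPointSobolev(\alpha) = g$ in $H^{-1/2}(\partial\region)$. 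Everything else is standard elliptic theory.
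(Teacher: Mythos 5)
Your proof is correct. Note that the paper does not actually prove this lemma: it is stated with a pointer to \cite[Prop.~1.2, Cor.~2.7]{Girault-Raviart-86}, and your Lax--Milgram argument on $H^1(\region)/\R$ (compatibility of $f$ via the test function $v\equiv 1$, Poincar\'e--Wirtinger coercivity, distributional identification of $\ddiv\alpha = f$, and recovery of the normal trace from Green's formula \eqref{eq:Green-Hdiv} plus surjectivity of the trace onto $H^{1/2}(\partial\region)$) is precisely the standard content packaged in that citation, so there is nothing to compare beyond the fact that you have written it out. One point you raise deserves emphasis: uniqueness of $\alpha$ cannot hold among all of $H(\ddiv;\region)$, since one may add a divergence-free field with vanishing normal trace (e.g.\ the rotated gradient of a function constant on $\partial\region$); the lemma must be read, as you do, as uniqueness within gradient fields $\alpha=\nabla u$, which is the sense in which the paper uses it.
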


		The solution $u$ to \eqref{eq:LiftingSobolev} inherits additional regularity of the data as follows.
		\begin{theorem}[{\cite[Thm.~1.10]{Girault-Raviart-86}}]
		\thlabel{thm:Regularity}
		Let $\region \subset \R^{2}$ be a locally $(m+1)$-Lipschitzian domain with boundary $\partial \region$ and assume that the data $f$ and $g$ satisfy
		\begin{equation}
		f \in W^{m,p}(\region),\qquad 
		g \in W^{m+1-1/p,p}(\partial\region),\qquad
		1 < p < \infty.
		\end{equation}
		Then $[u] \in W^{m+2,p}(\region)/\R$ and there exists a constant $C = C(m,p,\region)$ such that
		\begin{equation}
		\|[u]\|_{W^{m+2,p}(\region)/\R} \leq C( \|f\|_{W^{m,p}(\region)}
		+ \|g\|_{W^{m+1-1/p,p}(\partial\region)} ).
		\end{equation}
		\end{theorem}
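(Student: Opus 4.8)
The plan is to prove the a priori estimate by the classical localization-plus-bootstrap scheme of linear elliptic theory, with the $L^p$ analogue of the Schauder estimates (Agmon--Douglis--Nirenberg / Calder\'on--Zygmund) replacing the $L^2$ energy estimates. First I would fix the meaning of the solution: because the prescribed source $f = \frac{1}{|\region|}\int_{\partial\region} g\,ds$ satisfies exactly the compatibility condition $\int_\region f\,dx = \int_{\partial\region} g\,ds$ dictated by Green's formula \eqref{eq:Green-Hdiv}, the bilinear form $a(u,v) = \int_\region \nabla u\cdot\nabla v\,dx$ is coercive on $H^1(\region)/\R$ by the Poincar\'e--Wirtinger inequality and the functional $v \mapsto -\int_\region f v\,dx + \int_{\partial\region} g v\,ds$ is bounded there, so Lax--Milgram produces a unique $[u] \in H^1(\region)/\R$; I normalise the representative by $\int_\region u\,dx = 0$ so that full norms and quotient norms stay comparable throughout.

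Next I would localize: cover $\ol\region$ by finitely many interior balls together with the chart neighbourhoods $U(x)$ furnished by the definition of a locally $(m+1)$-Lipschitzian domain, and pick a subordinate smooth partition of unity $\{\chi_k\}$. On an interior patch, $\chi_k u$ solves a Poisson equation on $\R^2$ with right-hand side $\chi_k f$ plus terms involving only derivatives of $u$ of order $\le m+1$; iterating the Calder\'on--Zygmund bound $\|D^2 w\|_{L^p(\R^2)} \le C\,\|\Delta w\|_{L^p(\R^2)}$ over all derivatives up to order $m$ gives $\|\chi_k u\|_{W^{m+2,p}} \le C\big(\|f\|_{W^{m,p}(\region)} + \|u\|_{W^{m+1,p}(\region)}\big)$.

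On a boundary patch I would push the problem forward by the chart $g_x$, which is regular enough to transport the Sobolev scale $W^{s,p}$ for $0 \le s \le m+2$, turning the equation into a uniformly elliptic divergence-form problem with $C^m$ coefficients on the half-ball $B_+$ and the Neumann datum into an oblique first-order boundary condition. Tangential difference quotients control the tangential derivatives up to order $m+2$ on $B_+$, and the equation itself then expresses the remaining pure-normal derivatives through the tangential ones and $f$; transporting back yields $\|\chi_k u\|_{W^{m+2,p}(\region)} \le C\big(\|f\|_{W^{m,p}} + \|g\|_{W^{m+1-1/p,p}(\partial\region)} + \|u\|_{W^{m+1,p}}\big)$, the fractional boundary norm entering through the trace map $W^{m+2,p}(B_+)\to W^{m+2-1/p,p}(B_0)$ and a bounded right inverse. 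Summing over $k$ and absorbing, one first obtains $\|[u]\|_{W^{m+2,p}/\R} \le C\big(\|f\|_{W^{m,p}} + \|g\|_{W^{m+1-1/p,p}(\partial\region)} + \|[u]\|_{W^{m+1,p}/\R}\big)$, and the lower-order term is removed by induction on $m$: the case $m=0$ rests on a direct $W^{1,p}$ estimate for the weak solution, and for $m \ge 1$ the statement at level $m-1$ bounds $\|[u]\|_{W^{m+1,p}/\R}$ by data already dominated by the right-hand side above.

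I expect the delicate point to be the boundary estimate under the mere $C^{m+1}$-type regularity of the charts: one must verify that flattening $\partial\region$ costs no more than $m$ derivatives of coefficient regularity, so that the transformed problem still lies within the scope of $W^{m,p}$ elliptic theory, and that the Neumann (oblique) condition, being only first order, nonetheless together with the equation pins down all of $D^{m+2}u$ near $\partial\region$ in $L^p$. The careful accounting of which difference quotients remain tangential after the change of variables, the precise regularity the charts must have to carry $u$ into $W^{m+2,p}$, and the handling of the boundary data on the fractional scale $W^{m+1-1/p,p}(\partial\region)$ via traces, are where the proof demands real work; the remaining partition-of-unity assembly and the induction are routine.
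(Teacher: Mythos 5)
The paper offers no proof of this statement: it is imported verbatim as Theorem~1.10 of Girault--Raviart, so there is no internal argument to compare yours against, and the only fair assessment is of your sketch on its own merits. What you outline is indeed the classical route by which such results are established in the cited literature --- interior Calder\'on--Zygmund estimates, flattening of the boundary by the $C^{m+1}$ charts (correctly costing one derivative, leaving $C^{m}$ coefficients), tangential difference quotients plus recovery of the pure normal derivatives from the equation, trace-space bookkeeping for the datum $g \in W^{m+1-1/p,p}(\partial\region)$, partition-of-unity assembly, and absorption of the lower-order term --- and the overall architecture is sound.

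Two points keep this an outline rather than a proof. First, your base case is thinner than you present it: for $p \neq 2$ there is no ``direct $W^{1,p}$ estimate for the weak solution'' --- Lax--Milgram only yields $H^{1}(\region)/\R$ --- and starting the induction at $m=0$ for general $1<p<\infty$ requires genuinely additional input (duality/interpolation in the style of Meyers or Simader, or a compactness--uniqueness argument in $W^{1,p}(\region)/\R$ exploiting that the only solutions with zero data are constants). Once that is in place your induction correctly removes $\|[u]\|_{W^{m+1,p}/\R}$, but not before. Second, a small mismatch with the statement: the theorem concerns general data $f \in W^{m,p}(\region)$ subject only to the compatibility condition $\int_\region f\,dx = \int_{\partial\region} g\,ds$, whereas your existence step silently specializes to the constant $f$ of \eqref{eq:LiftingSobolev}; the argument should be run for arbitrary compatible $f$, with the constant-$f$ case of the paper then a corollary. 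The boundary estimate under mere $(m+1)$-Lipschitz chart regularity, which you flag yourself, is the other place where the real work lies and is deferred rather than done.
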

		Based on the assumptions of Theorem \ref{thm:Regularity} and the Sobolev embedding theorem \cite[Thm.~1.3]{Girault-Raviart-86}, the continuous injection
		\begin{equation}
		W^{m,p}(\region) \hookrightarrow C^{n}(\ol{\region}),\qquad
		1/p < (m-n)/2
		\end{equation}
		holds.
		
		\begin{remark}
			\thlabel{rem:MoreRegularity}
			\thref{thm:Regularity} in fact holds for more general differential operators than the Laplacian.
			The weak solution to \eqref{eq:LiftingSobolev} is given by the minimizer of
			\begin{align}
				\label{eq:NeumannEnergy}
				E(u, \region, f, g) = & \frac{1}{2} \int_\region \| \nabla u\|^2\,dx + \int_\region f\,u\,dx - \int_{\partial \region} g\,u\,ds \\
				\intertext{over $u \in H^1(\region)/\R$. More generally, \thref{thm:Regularity} holds for minimizers of functionals of the type}
				E(u, \region, A, f, g) = & \frac{1}{2} \int_\region \langle A \nabla u, \nabla u \rangle dx + \int_\region f\,u\,dx - \int_{\partial \region} g\,u\,ds
			\end{align}
			with spatially varying matrix $A \in C^{m+1}(\ol{\region},\R^{2\times2})$ such that there are constants $0 < \lambda < \Lambda$ with
			\begin{align}
				\label{eq:LambdaBounds}
				\lambda \|r\|^2_{\R^2} \leq \langle A(x) r, r \rangle \leq \Lambda \|r\|^2_{\R^2}
			\end{align}
			for all $x \in \ol{\region}$ and $r \in \R^2$.
			
			In particular the dependency of the constant $C$ on $A$ is only through the parameters $\lambda, \Lambda$ and on upper bounds to the supremum norms $\sup_{x \in \ol{\region}} |\partial_I A_{ij}(x)|$ of derivatives up to order $m+1$ of coefficients of $A$. That is, for a set of matrices for which common $\lambda, \Lambda$ and common upper bounds can be found, the same constant $C$ in \thref{thm:Regularity} can be applied uniformly.
			For a detailed exposition of regularity results based on $C^{\infty}$ assumptions, we refer to \cite{Schechter1959,Agmon1959}.
		\end{remark}
		
		Now we adopt the settings for the main part of this paper, that is $\region$ is of class $C^\infty$, bounded and simply connected, $f$ is constant as in \eqref{eq:LiftingSobolev} and $g \in C^{\infty}(\partial\region)$.
		First we establish additional regularity of the images $\liftPointSobolev(g)$ in \thref{thm:NeumannSobolev}.
		If $m, p \to \infty$, as in this setting, then $n < m - 2/p \to \infty$. In view of the isomorphism established by equation \eqref{eq:Green-Hdiv} due to \thref{thm:NeumannSobolev}, we conclude
		\begin{proposition}
			\thlabel{thm:Neumann}
			Let $\region \subset \R^{2}$ be a $C^{\infty}$ domain, $g \in C^{\infty}(\partial\region)$ and $f = |\region|^{-1} \int_{\partial\region} g\,ds$. Then there is a unique vector field $\alpha \in C^{\infty}(\ol{\region},\R^{2})$ with $\langle \alpha, n\rangle=g$ and $\ddiv \alpha = f$, given by the unique solution to the corresponding Neumann problem \eqref{eq:LiftingSobolev}.
		\end{proposition}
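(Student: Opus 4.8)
\emph{Proof plan.} The statement is essentially a regularity upgrade of \thref{thm:NeumannSobolev}, obtained by bootstrapping with \thref{thm:Regularity} and the Sobolev embedding, so the plan is to chain these three results together.

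First I would produce the candidate $\alpha$ at low regularity. Since $\region$ is $C^\infty$, bounded and simply connected, it has in particular a Lipschitz boundary, $\partial\region$ is compact so that $f = |\region|^{-1}\int_{\partial\region} g\,ds$ is well defined, and the compatibility condition $\int_\region f\,dx = |\region|\,f = \int_{\partial\region} g\,ds$ holds by this very choice of $f$, making \eqref{eq:LiftingSobolev} a well-posed Neumann problem. Feeding $g \in C^\infty(\partial\region) \subset H^{-1/2}(\partial\region)$ into the map $\liftPointSobolev$ of \thref{thm:NeumannSobolev} yields $\alpha = \liftPointSobolev(g) = \nabla u \in H(\ddiv;\region)$, the \emph{unique} vector field in $H(\ddiv;\region)$ with constant divergence $\ddiv\alpha = f$ and extended normal trace $\deliftPointSobolev(\alpha) = g$, where $u$ solves \eqref{eq:LiftingSobolev} and is unique up to an additive constant. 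Because $\region$ is bounded, every $\beta \in C^\infty(\ol{\region},\R^2)$ already belongs to $H(\ddiv;\region)$; hence uniqueness within the smaller class $C^\infty(\ol{\region},\R^2)$ is inherited for free, and only smoothness of $\alpha$ up to the boundary remains to be established.

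Second I would run the bootstrap. Since $\partial\region$ is of class $C^\infty$, $\region$ is locally $(m+1)$-Lipschitzian for every $m \in \N$; the datum $f$ is constant, so $f \in W^{m,p}(\region)$ for all $m$ and all $1 < p < \infty$, and $g \in C^\infty(\partial\region) \subset W^{m+1-1/p,p}(\partial\region)$ likewise. Therefore \thref{thm:Regularity} applies at every order, giving $[u] \in W^{m+2,p}(\region)/\R$ for all $m$ and all $1 < p < \infty$. Fixing $p$ and letting $m \to \infty$, the Sobolev injection $W^{m+2,p}(\region) \hookrightarrow C^n(\ol{\region})$, valid whenever $1/p < (m+2-n)/2$, shows that $u$ admits a representative in $C^n(\ol{\region})$ for every $n$, i.e. $u \in C^\infty(\ol{\region})$. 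Consequently $\alpha = \nabla u \in C^\infty(\ol{\region},\R^2)$, and by smoothness the weak identities become classical: $\ddiv\alpha = f$ pointwise, and the extended normal trace $\deliftPointSobolev(\alpha)$ coincides with the ordinary normal component $\deliftPoint(\alpha) = \langle\alpha,n\rangle$ of Lemma \ref{thm:ProjectionNormalComponent}, so $\langle\alpha,n\rangle = g$ on $\partial\region$.

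The work here is bookkeeping rather than a genuine obstacle: one must check that the hypotheses of \thref{thm:Regularity} are met \emph{simultaneously for all orders} --- which is exactly where the $C^\infty$ regularity of $\partial\region$ and the constancy of $f$ enter --- and that the index condition $1/p < (m+2-n)/2$ can always be met by increasing $m$ at fixed $p$ and $n$, so that no single pair $(m,p)$ suffices but a whole family does. One should also record the consistency of the two normal-trace notions once $\alpha$ is smooth. All of these points are immediate from the results already quoted.
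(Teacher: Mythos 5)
Your proposal is correct and follows essentially the same route as the paper: obtain the weak solution and its uniqueness in $H(\ddiv;\region)$ from \thref{thm:NeumannSobolev}, then bootstrap via \thref{thm:Regularity} (valid at every order since $f$ is constant, $g$ is smooth, and the $C^\infty$ boundary is locally $(m+1)$-Lipschitzian for all $m$) and conclude $u \in C^\infty(\ol{\region})$ by the Sobolev embedding, with the normal-trace identification via Green's formula \eqref{eq:Green-Hdiv}. The only difference is that you spell out the bookkeeping (simultaneous validity of the hypotheses for all orders, consistency of $\deliftPointSobolev$ with $\langle\alpha,n\rangle$ once $\alpha$ is smooth) which the paper leaves implicit in its two-line argument preceding the proposition.
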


\section{Equivalence between Contours and Shape Measures}
	\label{sec:MathContrib}
	\subsection{Absolutely Continuous Trajectories in the Space of Shape Measures}
		We now introduce the set of shape measures which are measures whose density is given by indicator functions. All measures are normalized to have unit-mass to keep them comparable through optimal transport.
		\begin{definition}[Shape Measure]
			\label{def:ShapeMeasure}
			A measure $\mu$ is called a shape measure, if there is an open set $\region$, $0 < |\region| < \infty$ with a connected $C^\infty$-boundary such that for all measurable $A \subset \R^2$
			\begin{equation}
				\mu(A) = |\region|^{-1} \cdot |A \cap \region|\,.
			\end{equation}
			Integration w.r.t.~$\mu$ can then be written as
			\begin{equation}
				\int_A \phi\,d\mu = |\region|^{-1} \int_{A \cap \region} \phi\,dx\,.
			\end{equation}
			Denote by $\dens(\mu)$ the corresponding density function w.r.t.~the Lebesgue measure. Identifying functions that are equal a.e., the density is unique and given by $\dens(\mu) = |\region|^{-1} \indicator_\region$ where $\indicator_\region$ denotes the indicator function of $\region$. We can then write
			\begin{equation}
				\int_A \phi\,d\mu = \int_A \phi\,\dens(\mu)\,dx\,.
			\end{equation}
			Denote the set of shape measures by $\SMeas$.
		\end{definition}

		\begin{remark}
			We require that $\region$ is of class $C^\infty$ and has a connected boundary to obtain compatibility with the contour description of shapes as introduced in Sect.~\ref{sec:BackgroundContours}.
			Within this class of regular shape measures one can however describe a much more general class of shapes by metric completion in the sense of \cite[Chap.~3, Thm.~3.1]{ZolesioShape2011}.
			The analogous step in the context of contours is briefly discussed in \cite{Michor2006}.
		\end{remark}

		In analogy to Definition \ref{def:TangentSpace} we now introduce a corresponding tangent space for shape measures.
	
		\begin{definition}[Shape Tangent Space]
			\label{def:ShapeTangentSpace}
				For a shape measure $\mu$ the shape tangent space $\STan(\mu)$ at $\mu$ is defined as
			\begin{align}
				\STan(\mu) = & \left \{
					\nabla u \, \colon \, u \in \testFunction
					\wedge \Delta u = \const \text{ in } \spt(\mu)
					\right \} \,.
			\end{align}
		\end{definition}
	
		Compared to the tangent space for conventional optimal transport, Definition \ref{def:TangentSpace}, two modifications have been made: an additional constraint $\Delta u = \const$ is introduced and no completion w.r.t.~$L^2(\mu)$ is made.
	
		The first modification ensures that vectors in $\STan(\mu_t)$ correspond to deformations that, to first order, keep the density of $\mu_t$ constant within its support.
		Consider for a given shape measure $\mu$ and some $\alpha \in \STan(\mu)$ the path $\mu_t = (\id + t\cdot \alpha)_\sharp \mu$.
		The density of $\mu_t$ can be determined from the density of $\mu$ and Jacobian of $\id + t \cdot \alpha$. A quick calculation shows that near $t=0$
		\begin{align}
			\label{eq:FirstOrderJacobian}
			\det J_{\id + t\cdot \alpha} = 1 + t\cdot \ddiv \alpha + \mc{O}(t^2)\,.
		\end{align}
		Since $\ddiv \alpha = \const$ on $\spt \mu$ for $\alpha \in \STan(\mu)$, we see that deformation keeps the density of $\mu_t$ homogeneous to first order.
	
		The second change ensures that shape measure trajectories with tangents in $\STan(\mu_t)$ retain a $C^\infty$ boundary during evolution.	
		The following theorem properly establishes the relation between $\STan(\mu)$ and absolutely continuous paths of shape measures.

		\begin{theorem}[Paths with Tangents in $\STan(\mu_t)$ are Absolutely Continuous Shape Measure Paths]
			\thlabel{thm:ShapeMeasurePaths}
			Given a measure path $t \mapsto \mu_t$ and a flow field path $t \mapsto \alpha_t,\,t \in [0,1]$ such that
			\begin{enumerate}[(i)]
				\item \label{item:SMP.ShapeMeasure} $\mu_{t}$ is a shape measure at $t=0$,
				\item \label{item:SMP.AlphaSTan} $\alpha_t \in \STan(\mu_t)$,
				\item \label{item:SMP.Continuity} $\mu_t$ and $\alpha_t$ satisfy the continuity equation,
				\item \label{item:SMP.HoldAll} there is an open bounded set $\hat{\region}$ such that $\spt \mu_t \subset \hat{\region}$ for all $t \in [0,1]$,
				\item \label{item:SMP.AlphaSup} $\alpha \in \mc{X}_p(\hat{\region})$ for any positive integer $p$, see \eqref{eq:YounesFlowFieldSpace},
			\end{enumerate}
			then $\mu_t$ is an absolutely continuous shape measure path .
		\end{theorem}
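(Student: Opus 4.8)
The plan is to integrate the flow field into a family of diffeomorphisms and then read off every claimed property of $\mu_t$ from the corresponding pushforward of $\mu_0$. By hypotheses (\ref{item:SMP.HoldAll}) and (\ref{item:SMP.AlphaSup}) we have $\alpha \in \mc{X}_p(\hat{\region})$ for every positive integer $p$, so \thref{thm:FlowDiffs} yields a family of diffeomorphisms $\varphi_t$ of $\hat{\region}$ solving $\partial_t \varphi_t = \alpha_t \circ \varphi_t$, $\varphi_0 = \id$, with $\varphi_t$ of class $C^p$ for every $p$ and hence $C^\infty$; \thref{thm:UniformDPhiContinuity} additionally provides continuity of $t \mapsto \varphi_t$ in every $C^k$-norm. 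Since $\varphi_0 = \id$ has positive Jacobian and $t \mapsto \det J_{\varphi_t}$ is continuous, $\det J_{\varphi_t} > 0$ throughout.

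Next I would show $\mu_t = (\varphi_t)_\sharp \mu_0$. Put $\nu_t = (\varphi_t)_\sharp \mu_0$; differentiating $\int \phi \, d\nu_t = \int (\phi \circ \varphi_t)\,d\mu_0$ in time for $\phi \in \testFunction$ and using $\partial_t \varphi_t = \alpha_t \circ \varphi_t$ shows that $(\nu_t, \alpha_t)$ satisfies the continuity equation with $\nu_0 = \mu_0$. For each $t$ the field $\alpha_t = \nabla u_t$ is smooth with compact support, hence Lipschitz, and $\int_0^1 \mathrm{Lip}(\alpha_t)\,dt < \infty$ by (\ref{item:SMP.AlphaSup}); therefore the continuity equation with prescribed velocity field and prescribed initial datum admits a unique solution, and comparison with hypothesis (\ref{item:SMP.Continuity}) forces $\mu_t = \nu_t = (\varphi_t)_\sharp \mu_0$.

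Now I would verify that each $\mu_t$ is a shape measure. Let $\region_0$ be the open set underlying $\mu_0$, with connected $C^\infty$-boundary, and set $\region_t = \varphi_t(\region_0)$. Then $\region_t$ is open, contained in the bounded set $\hat{\region}$, of positive volume, and its boundary $\partial \region_t = \varphi_t(\partial \region_0)$ is connected and of class $C^\infty$ because $\varphi_t$ is a $C^\infty$-diffeomorphism; moreover $\mu_t$ is an absolutely continuous probability measure whose density is concentrated on $\region_t$ and given at $y = \varphi_t(x)$ by $\dens(\mu_t)(y) = \dens(\mu_0)(x)/\det J_{\varphi_t}(x)$. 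To see this density is constant on $\region_t$, I would invoke Liouville's formula $\tfrac{d}{dt}\det J_{\varphi_t}(x) = (\ddiv \alpha_t)(\varphi_t(x))\cdot \det J_{\varphi_t}(x)$; by hypothesis (\ref{item:SMP.AlphaSTan}), $\ddiv \alpha_t \equiv c(t)$ on $\spt \mu_t$, which contains $\region_t$, and $\varphi_t(x) \in \region_t$ for $x \in \region_0$, so for such $x$ this is a scalar linear ODE with $x$-independent coefficient, whence $\det J_{\varphi_t}(x) = \exp\!\big(\int_0^t c(s)\,ds\big)$ does not depend on $x \in \region_0$. Consequently $\dens(\mu_t)$ is constant on $\region_t$, and since $\mu_t(\R^2) = 1$ it must equal $|\region_t|^{-1}\indicator_{\region_t}$; thus $\mu_t \in \SMeas$.

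It remains to prove that $t \mapsto \mu_t$ is an absolutely continuous path in $(\Meas, \WD)$. Each $\mu_t$ is a probability measure with finite second moments, being supported in the bounded set $\hat{\region}$, so the characterization of absolutely continuous measure paths via the continuity equation applies and it suffices to check $\int_0^1 \|\alpha_t\|_{L^2(\mu_t)}\,dt < \infty$. Since $\mu_t$ is a probability measure supported in $\hat{\region}$, one has $\|\alpha_t\|_{L^2(\mu_t)} \leq \|\alpha_t\|_{\hat{\region},\infty} \leq \|\alpha_t\|_{\hat{\region},p,\infty}$, so the integral is bounded by $\|\alpha\|_{\mc{X}_p(\hat{\region})} < \infty$ by (\ref{item:SMP.AlphaSup}). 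I expect the main obstacle to be the passage from the purely distributional continuity equation in hypothesis (\ref{item:SMP.Continuity}) to the Lagrangian description $\mu_t = (\varphi_t)_\sharp \mu_0$, that is, identifying and invoking a uniqueness statement for the continuity equation suited to this regularity class; once that identification is in place, the remaining conclusions follow routinely from Liouville's formula and the supremum-norm estimate above.
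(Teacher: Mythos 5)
Your proposal is correct and follows the paper's strategy in all essentials: integrate $\alpha$ into $C^\infty$-diffeomorphisms via \thref{thm:FlowDiffs}, identify $\mu_t=\varphi_{t\,\sharp}\mu_0$, use Liouville's formula $\partial_t \det J_{\varphi_t}=\det J_{\varphi_t}\,(\ddiv\alpha_t)\circ\varphi_t$ together with $\ddiv\alpha_t=\const$ on $\spt\mu_t$ to see the Jacobian determinant is spatially constant on $\region_0$ (hence the pushed-forward density is again a rescaled indicator, with $C^\infty$ connected boundary preserved by the diffeomorphism), and finally bound $\int_0^1\|\alpha_t\|_{L^2(\mu_t)}\,dt$ by $\|\alpha\|_{\mc{X}_p(\hat{\region})}$ to get absolute continuity. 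The one genuine divergence is the identification step you yourself flag as the main obstacle: you compute that $\nu_t=\varphi_{t\,\sharp}\mu_0$ solves the continuity equation with the same velocity and initial datum and then appeal to a uniqueness theorem for the continuity equation with time-integrable Lipschitz fields; this is a standard result (e.g.\ Ambrosio--Gigli--Savar\'e), but it is not among the results quoted in the paper, so you would need to supply that citation. The paper avoids this import by arguing in the opposite direction: it differentiates $t\mapsto\int\phi\,d(\varphi^{-1}_{t\,\sharp}\mu_t)$, cancels the chain-rule term against the continuity-equation term to show this quantity is constant, and then uses the a.e.-equality of locally integrable densities tested against $\testFunction$ to conclude $\varphi^{-1}_{t\,\sharp}\mu_t=\mu_0$ --- in effect a self-contained proof, via the backward flow, of exactly the uniqueness you invoke. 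So your route buys brevity at the cost of an external reference, while the paper's pullback computation keeps the argument closed under its own toolkit; otherwise the two proofs coincide, and your supporting details (positivity of $\det J_{\varphi_t}$, the density transformation formula, the $x$-independent linear ODE for $\det J_{\varphi_t}$, and the estimate $\|\alpha_t\|_{L^2(\mu_t)}\leq\|\alpha_t\|_{\hat{\region},\infty}$) are all sound.
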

		\begin{remark}
			Note hat in condition (\ref{item:SMP.AlphaSTan}) we demand $\alpha_t \in \STan(\mu_t)$ without knowing whether $\mu_t \in \SMeas$. However, the definition of $\STan(\mu_t)$ is formally valid also for this case.
		\end{remark}
		\begin{proof}[Proof]
			Since $\alpha \in \mc{X}_p(\hat{\region})$, by virtue of \thref{thm:FlowDiffs} the family of maps $\varphi_t,\,t \in [0,1]$ defined by
			\[
				\partial_t \varphi_t = \alpha_t \circ \varphi_t,
				\qquad \varphi_0 = \id
			\]
			is a family of $C^\infty$-diffeomorphisms on $\hat{\region}$. One has $J_{\varphi^{-1}} = (J_{\varphi} \circ \varphi^{-1})^{-1}$. Therefore, one finds
			\begin{align}
				0 & = \partial_t \left( \varphi_t \circ \varphi_t^{-1} \right)
					= (\partial_t \varphi_t) \circ \varphi_t^{-1} + (J_{\varphi_t} \circ \varphi_t^{-1}) \partial_t(\varphi_t^{-1}) \\
				\intertext{and then}
				\partial_t (\varphi_t^{-1}) & = -(J_{\varphi_t} \circ \varphi_t^{-1})^{-1} \big((\partial_t \varphi_t) \circ \varphi_t^{-1}\big)
					= -J_{\varphi_t^{-1}} \, \alpha_t\,.
			\end{align}
			Consider now the following integral and its time derivative for a test function $\phi \in \testFunction$:
			\begin{align}
				& \frac{d}{dt} \int \phi\,d(\varphi^{-1}_{t\,\sharp} \mu_t) =
					\frac{d}{dt} \int \phi \circ \varphi^{-1}_t \, d\mu_t \nonumber \\
				= & \int \langle (\nabla \phi) \circ \varphi^{-1}_t, 
					\underbrace{\partial_t \varphi_t^{-1}}_{-J_{\varphi_t^{-1}} \, \alpha_t}
					\rangle\,d\mu_t
					+ \int \underbrace
						{\langle \nabla ( \phi \circ \varphi_t^{-1}), \alpha_t \rangle }_{ \langle (\nabla \phi) \circ \varphi^{-1}_t, J_{\varphi^{-1}_t}\,\alpha_t \rangle}
					d\mu_t = 0
			\end{align}
			Where we applied the chain rule in the first term and the continuity equation in the second. Thus we have by integration
			\begin{equation}
				\int \phi\,d(\varphi^{-1}_{{t_1}\,\sharp} \mu_{t_1}) = \int \phi\,d(\varphi^{-1}_{{t_2}\,\sharp} \mu_{t_2})
			\end{equation}
			for all $t_1,t_2 \in [0,1]$.
			From \cite[Thm.~1.2.5]{Hormander1990} we know that if
			\begin{equation*}
				\int f\,\phi\,dx = \int g\,\phi\,dx \qquad \textnormal{for all} \quad \phi \in \testFunction
			\end{equation*}
			for locally integrable $f,g$ then $f=g$ almost everywhere. Let $f$ and $g$ be the density functions of $(\varphi^{-1}_{{t_i}\,\sharp} \mu_{t_i})$ for $i=1,2$. These exist since the density functions of $\mu_{t_i}$ exist and $\varphi^{-1}_{t_i}$ are diffeomorphisms. Then we can conclude that these density functions agree a.e.~and hence the measures are in fact identical.
			
			We thus have
			\begin{align}
				\varphi^{-1}_{t\,\sharp} \mu_t & = \varphi^{-1}_{0\,\sharp} \mu_0 = \mu_0 \\
				\intertext{and by conjugation of the push-forward with $\varphi_t$ find}
				\mu_t & = \varphi_{t\,\sharp} \mu_0\,.
			\end{align}
			Now check the Jacobian determinant of $\varphi_t$: recall for a differentiable family of matrices
			\begin{equation}
				\frac{d}{dt} \det(A_t) = \det(A_t)\,\tr(A^{-1}_t\,\partial_t A_t)\,.
			\end{equation}
			From \thref{thm:FlowDiffs} we have that the Jacobian of $\varphi_t$ satisfies
			\begin{align}
				\partial_t J_{\varphi_t} = (J_{\alpha_t} \circ \varphi_t ) J_{\varphi_t}
			\end{align}
			thus we find
			\begin{align}
				\frac{\partial}{\partial t} \det(J_{\varphi_t}) & = \det(J_{\varphi_t}) \tr( J_{\varphi_t}^{-1} \, \partial_t {J}_{\varphi_t} ) \\
					& = \det(J_{\varphi_t}) \tr( J_{\varphi_t}^{-1} \, (J_{\alpha_t} \circ \varphi_t) J_{\varphi_t} ) \\
					& =\det(J_{\varphi_t}) (\ddiv \alpha_t) \circ \varphi_t
			\end{align}
			where we have interpreted the vector field $\alpha_t$ as a map and denote its Jacobian accordingly by $J_{\alpha_t}$.
			Since for any $x \in \spt(\mu_0)$ the path $\varphi_t(x)$ over $t$ always lies within the support of $\mu_t$ for all $t \in [0,1]$ and since $\ddiv \alpha_t = \const$ throughout $\spt \mu_t$, the temporal derivative of the determinant of the Jacobian of $\varphi_t$ is spatially constant.
			Since $J_{\varphi_0} = J_{\id} = 1$, i.e. $\det(J_{\varphi_0})=1$, one finds $\det J_{\varphi_t}$ is spatially constant at all times within $\spt(\mu_0)$.
			Since $\mu_0$ is a shape measure, it has a density function, with constant value within $\spt(\mu_0)$ and zero elsewhere, i.e. a rescaled indicator function. One can then, through the push-forward via $\varphi_t$ find density functions for other $t \in [0,1]$. Since $\det J_{\varphi_t}$ is constant within $\spt(\mu_0)$ one easily finds, that the density functions of $\mu_t$ is also a rescaled indicator function. Since $\varphi_t$ is a $C^\infty$-diffeomorphism it preserves simple connectedness and $C^\infty$-smoothness of the boundary of $\spt(\mu_0)$. Therefore $\mu_t$ must be a shape measure at all times.
			Absolute continuity of the path $\mu_t$ is given by the assumption $\alpha \in \mc{X}_p(\hat{\region})$, from which absolute integrability with respect to $L^2(\mu_t)$ follows.
		\end{proof}
		
		\begin{remark}
			Note that so far the term tangent space is only used in a sense of analogy, in the way that \cite{Ambrosio2013} discusses the weak Riemannian structure of $\Meas$.
		\end{remark}
	
	\subsection{Lifting of Contours}
		Every contour $c \in \emb$ has a well-defined interior $\region(c)$ of class $C^\infty$. We formally define the map that takes $c$ to the shape measure associated with $\region(c)$.
		\begin{definition}[Lifting of Contours]
			\label{def:ContourLifting}
			For a $C^\infty$-embedding $c: S^1 \rightarrow \R^2$ that parametrizes the boundary of some open, simply connected domain $\region(c)$ the corresponding shape measure $\liftPoint(c)$ is given by
			\begin{subequations}
			\begin{align}
				\big(\liftPoint(c)\big)(A) & = |\region(c)|^{-1} \cdot | A \cap \region(c)| \,.\\
				\intertext{As in \ref{def:ShapeMeasure}, integration w.r.t.~$\liftPoint(c)$ is given by}
				\int_A \phi \,d\liftPoint(c) & = |\region(c) |^{-1} \int_{A \cap \region(c)} \phi \,dx\,.
			\end{align}
			\end{subequations}
		\end{definition}
		
		It is evident that if two contours are related by some $C^\infty$-diffeomorphism $\varphi : S^1 \rightarrow S^1$ such that $c_1 = c_2 \circ \varphi$ then $\liftPoint(c_1) = \liftPoint(c_2)$. Vice versa, if two contours $c_1$ and $c_2$ both parametrize the boundary of some shape measure $\mu$, then there is a $C^\infty$-diffeomorphism $\varphi$ such that $c_1 = c_2 \circ \varphi$. Therefore we have:
		
		\begin{proposition}
			\thlabel{thm:LiftPointInverse}
			 The map
			 \begin{equation}
			 	\liftPointEquiv : \sfold \rightarrow \SMeas\,, \qquad [c] \mapsto \liftPoint(c)
			 \end{equation}
			 is a bijection between the quotient manifold $\sfold$ and the space of shape measures $\SMeas$.
		\end{proposition}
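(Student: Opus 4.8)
The plan is to verify the three things that constitute the claim: (a) $\liftPointEquiv$ is well-defined on equivalence classes, (b) it maps into $\SMeas$ and is surjective, and (c) it is injective. The map $\liftPoint$ itself is already defined in Definition \ref{def:ContourLifting} on representatives, so well-definedness reduces to showing that $\liftPoint(c_1) = \liftPoint(c_2)$ whenever $c_1 = c_2 \circ \varphi$ for some $\varphi \in \diff$. This is immediate: a reparametrization of the boundary curve does not change the bounding domain, so $\region(c_1) = \region(c_2)$ and hence the two measures coincide by the defining formula. Thus $\liftPointEquiv$ is a genuine map $\sfold \to \SMeas$.

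For surjectivity, take any $\mu \in \SMeas$. By Definition \ref{def:ShapeMeasure} there is an open set $\region$ with $0 < |\region| < \infty$ and connected $C^\infty$-boundary with $\mu = |\region|^{-1}\,|\cdot \cap \region|$. Since $\region$ is open with connected $C^\infty$-boundary (and bounded, being of finite volume with smooth compact boundary), it is simply connected and its boundary is diffeomorphic to $S^1$, as recalled in Section \ref{sec:BackgroundContours} for $C^\infty$ simply connected open sets; pick any parametrizing embedding $c \in \emb$ with $\region(c) = \region$. Then $\liftPoint(c) = \mu$ by construction, so $\liftPointEquiv([c]) = \mu$.

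For injectivity, suppose $\liftPointEquiv([c_1]) = \liftPointEquiv([c_2])$, i.e. $\liftPoint(c_1) = \liftPoint(c_2) =: \mu$. Two measures of the shape-measure form are equal iff their densities agree a.e., which (since both densities are rescaled indicator functions of open sets with smooth boundary) forces $\region(c_1) = \region(c_2)$ up to a Lebesgue-negligible set, hence equality of the open sets themselves and of their boundaries. So $c_1$ and $c_2$ parametrize the same smooth embedded curve. The key step is then to produce a diffeomorphism $\varphi \in \diff$ with $c_1 = c_2 \circ \varphi$: set $\varphi = c_2^{-1} \circ c_1$, which is well-defined since $c_2$ is an embedding onto the common image curve and hence a diffeomorphism onto its image, and $\varphi$ is $C^\infty$ with $C^\infty$ inverse $c_1^{-1} \circ c_2$, so $\varphi \in \diff$. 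Therefore $[c_1] = [c_2]$, establishing injectivity and completing the proof.

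The only mild subtlety — and the step I would be most careful about — is the claim that a $C^\infty$ open set with connected smooth boundary and finite volume has boundary diffeomorphic to $S^1$ and is simply connected; this is the classification of compact connected $1$-manifolds together with the Jordan–Brouwer separation theorem, and it is exactly the fact invoked (without proof) in the paragraph following the definition of $\emb$. Everything else is bookkeeping with the defining formulas and the elementary fact that a reparametrization between two embeddings with the same image is itself an element of $\diff$.
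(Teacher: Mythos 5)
Your proof is correct and follows essentially the same route as the paper, which simply asserts (in the paragraph preceding the proposition) that reparametrized contours yield the same measure and that two contours parametrizing the same boundary differ by an element of $\diff$, with surjectivity implicit in the earlier remark that a simply connected $C^\infty$ domain has boundary parametrizable by some $c \in \emb$. You have merely filled in the bookkeeping (density comparison, regularity of the domains, smoothness of $c_2^{-1} \circ c_1$) that the paper leaves as evident.
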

		
	\subsection{Lifting of Contour Tangent Vectors}
		Let $c \in \emb$ describe a shape in the contour representation and let $\mu = \liftPoint(c)$ be the corresponding shape measure representation.
		Let $\alpha \in \STan(\mu)$ describe to first order a deformation of $\mu$. The information encoded in $\alpha$ can also be encoded in some normal deformation $a \in H_c \emb$ of the contour $c$.
		This raises the question how the two descriptions for deformation are related. As it turns out $\alpha$ is already completely determined by its behaviour on the boundary of $\spt(\mu)$. This can be used to define maps that convert between $a$ and $\alpha$.
		
		\begin{definition}[Lifting of Contour Tangent Vectors]
			\label{def:TangentLifting}
			For a contour $c$ and a normal deformation field $a \in H_c \emb$, see Remark \ref{rem:HorizontalParametrization}, we define the lifting
			\begin{align}
				\liftTangent_c : H_c \emb \rightarrow \STan \big(\liftPoint(c) \big), \qquad a \mapsto \alpha = \liftTangent_c(a)
			\end{align}
			from $c$ onto $\liftPoint(c)$ as the gradient of the extended unique solution (up to constant shifts) of the Neumann problem
			\begin{subequations}
				\label{subeq:TangentLifting}
				\begin{align}
					\label{eq:TangentLiftingLaplace}
					\Delta\,u & = S & & \text{in } \region(c) \\
					\langle n, \nabla u \rangle & = a \circ c^{-1} & & \text{on } \partial \region(c) \\
					\intertext{where $n$ is the outward pointing unit normal vector on $\partial \region(c)$ and}
					\label{eq:TangentLiftingDivergence}
					S & = \left| \region(c) \right|^{-1} \int_{\partial \region(c)} a \circ c^{-1}\,ds \,.
				\end{align}
			\end{subequations}
		\end{definition}
		
		\begin{proof}
			By virtue of \thref{thm:Neumann} the solution $u$ to the PDE is unique (up to constants, which we can fix arbitrarily) and sufficiently smooth, i.e. in $C^\infty(\ol{\region})$. We can then specify any well-designed extension method that maps $C^\infty(\ol{\region})$ to $\testFunction$ to extend $u$.
			Since the extended $u$ is in $\testFunction$ and its Laplacian is constant within $\region(c)$, we find that $\liftTangent_c(a) = \nabla\,u \in \STan\big(\liftPoint(c)\big)$.
		\end{proof}		
		Some examples for the lifting of contour tangent vectors to the shape measure tangent space are illustrated in Fig. \ref{fig:TangentLifting}.
		\begin{remark}
			\label{rem:STanIdentification}
			The extension of the solution $u$ to \eqref{subeq:TangentLifting} is formally necessary such that $\alpha = \nabla u$ is contained in $\STan(F(c))$. For a unique solution $u$ (up to the constant shift) there are many valid extensions, all of them however coincide on $\region(c)$.
			Hence, from now on we will identify functions in $\STan(\mu)$ that coincide on $\spt(\mu)$.
		\end{remark}
		
		With this identification rule, for a fixed $c$ the map $\liftTangent_c$ is a bijection between $H_c \emb$ and $\STan\big(\liftPoint(c)\big)$, with inverse given by
		\begin{align}
			\label{eq:LiftTangentInverseFormula}
			\liftTangent_c^{-1}(\alpha)(\theta) & = \langle \alpha \circ c(\theta), n_c(\theta) \rangle_{\R^2} \text{ for } \theta \in S^1\,,
		\end{align}
		that is taking the normal component on the restriction of $\alpha$ to the boundary.
		
		In analogy to \thref{thm:LiftPointInverse} we then find:
		\begin{proposition}
			\thlabel{thm:LiftTangentInverse}
			The map
			\begin{align}
				{\liftTangentEquiv}_{[c]} : T_{[c]}\sfold \rightarrow \STan\big( \liftPointEquiv({[c]}) \big), \qquad [a] \mapsto \liftTangent_c(a)
			\end{align}
			is a bijection between the tangent space $T_{[c]}\sfold$ on the quotient manifold and the shape measure tangent space $\STan\big(\liftPointEquiv([c])\big)$ at the shape measure obtained by lifting the footpoint contour.
		\end{proposition}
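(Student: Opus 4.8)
The plan is to exhibit ${\liftTangentEquiv}_{[c]}$, for any fixed representative $c\in[c]$, as the composition
\[
	T_{[c]}\sfold \xrightarrow{\ \pi_{c,\ast}^{-1}\ } H_c\emb \xrightarrow{\ \liftTangent_c\ } \STan\big(\liftPoint(c)\big)=\STan\big(\liftPointEquiv([c])\big),
\]
whose two factors are already known to be bijections ($\pi_{c,\ast}$ from the horizontal-lift construction, $\liftTangent_c$ from the discussion just before the proposition, with inverse \eqref{eq:LiftTangentInverseFormula}), and whose target set is independent of $c$ by \thref{thm:LiftPointInverse}. So bijectivity will come for free once I show that this composition does not depend on which representative $c\in[c]$ is chosen.

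The key step I would establish first is the reparametrization covariance of the tangent lifting,
\[
	\liftTangent_{c\circ\varphi}(a\circ\varphi)=\liftTangent_c(a)\qquad\text{for all }\varphi\in\diff,\ a\in H_c\emb,
\]
equality understood modulo the identification of Remark \ref{rem:STanIdentification}, i.e.\ as equality of vector fields on $\ol{\region(c)}$. The argument is that $c$ and $c\circ\varphi$ trace out the same curve, so $\region(c\circ\varphi)=\region(c)$ (hence $|\region(c\circ\varphi)|=|\region(c)|$), and the boundary datum is unchanged, $(a\circ\varphi)\circ(c\circ\varphi)^{-1}=a\circ c^{-1}$ on $\partial\region(c)$, so the constant $S$ in \eqref{eq:TangentLiftingDivergence} is unchanged too. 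Hence the Neumann problem \eqref{subeq:TangentLifting} defining $\liftTangent_{c\circ\varphi}(a\circ\varphi)$ is literally the same as the one defining $\liftTangent_c(a)$, and by the uniqueness part of \thref{thm:Neumann} their gradients agree on $\region(c)$, hence on $\ol{\region(c)}$ by continuity.

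Next I would deduce representative-independence: for $v\in T_{[c]}\sfold$ and $c_2=c_1\circ\varphi$, equation \eqref{eq:HorizontalLiftCommutation} gives $\pi_{c_2,\ast}^{-1}(v)=\big(\pi_{c_1,\ast}^{-1}(v)\big)\circ\varphi$, so by the covariance lemma $\liftTangent_{c_2}\big(\pi_{c_2,\ast}^{-1}(v)\big)=\liftTangent_{c_1}\big(\pi_{c_1,\ast}^{-1}(v)\big)$. Thus ${\liftTangentEquiv}_{[c]}$ is well defined, and being a composition of bijections it is a bijection; its inverse sends $\alpha$ to the class of $(c,a)$ with $a(\theta)=\langle\alpha\circ c(\theta),n_c(\theta)\rangle$, cf.\ \eqref{eq:LiftTangentInverseFormula}.

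I do not expect a serious obstacle here: the statement is a compatibility assertion, and all analytic content (existence, uniqueness, smoothness of the Neumann solution) is already in \thref{thm:Neumann}. The only places that need a little care are checking that the normal trace in \eqref{eq:LiftTangentInverseFormula} is reparametrization covariant — which rests on $n_{c\circ\varphi}=n_c\circ\varphi$, the parametrization-independence of the outward unit normal — and bookkeeping the identification of Remark \ref{rem:STanIdentification} consistently throughout.
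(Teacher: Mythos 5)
Your proposal is correct and follows essentially the same route as the paper: it combines the bijectivity of $\liftTangent_c$ (via the normal-trace inverse \eqref{eq:LiftTangentInverseFormula} and the identification of Remark \ref{rem:STanIdentification}) with the equivalence/representation property of \thref{thm:TangentEquivalence}, organized as the composition with the horizontal lift. The only difference is that you spell out the reparametrization covariance $\liftTangent_{c\circ\varphi}(a\circ\varphi)=\liftTangent_c(a)$ via uniqueness of the Neumann solution, a step the paper dismisses as ``obvious''.
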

		
		\begin{proof}
			Keep in mind the identification rule in Remark \ref{rem:STanIdentification} and the resulting bijectivity through \eqref{eq:LiftTangentInverseFormula}.
			Further, let $c_1 \sim c_2$ be two contours, related by some $\varphi \in \diff$, i.e. $c_2 = c_1 \circ \varphi$, i.e. $\liftPoint(c_1) = \liftPoint(c_2)$, and let $a_1, a_2$ be two respective normal deformation fields. Then obviously $\liftTangent_{c_1}(a_1) = \liftTangent_{c_2}(a_2)$ if and only if $a_2 = a_1 \circ \varphi$, that is when $(c_1,a_1) \sim (c_2,a_2)$ in the sense of \thref{thm:TangentEquivalence}.
			Then, by the representation property from \thref{thm:TangentEquivalence} the proposition follows.
		\end{proof}		
		\begin{figure}[hbt]
			\centering
			\newlength{\figwidthC}%
			\setlength{\figwidthC}{5.5cm}%
			\begin{tabular}{ccc}
				\includegraphics[width=\figwidthC]{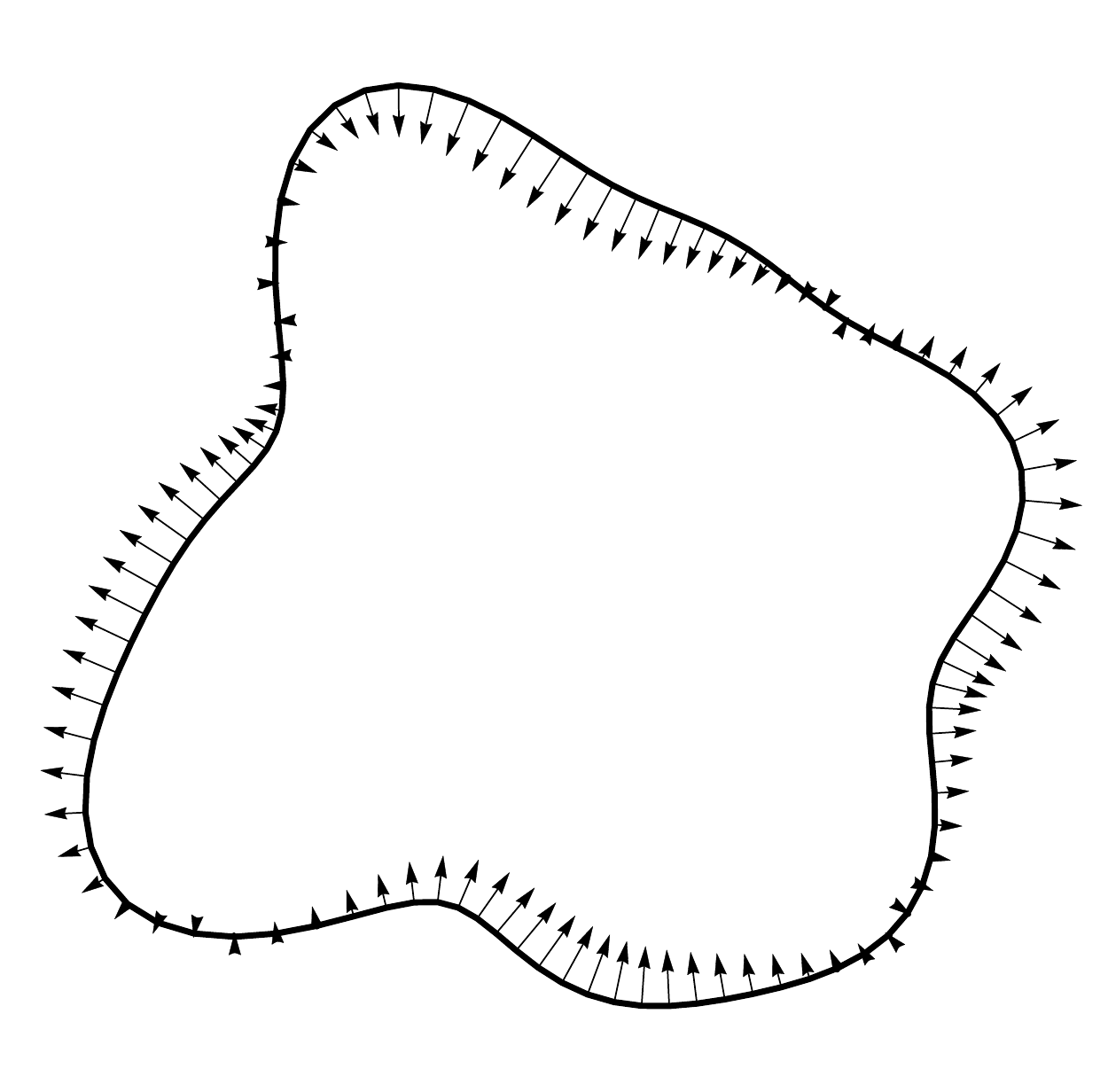} &%
				\hspace{1cm}&%
				\includegraphics[width=\figwidthC]{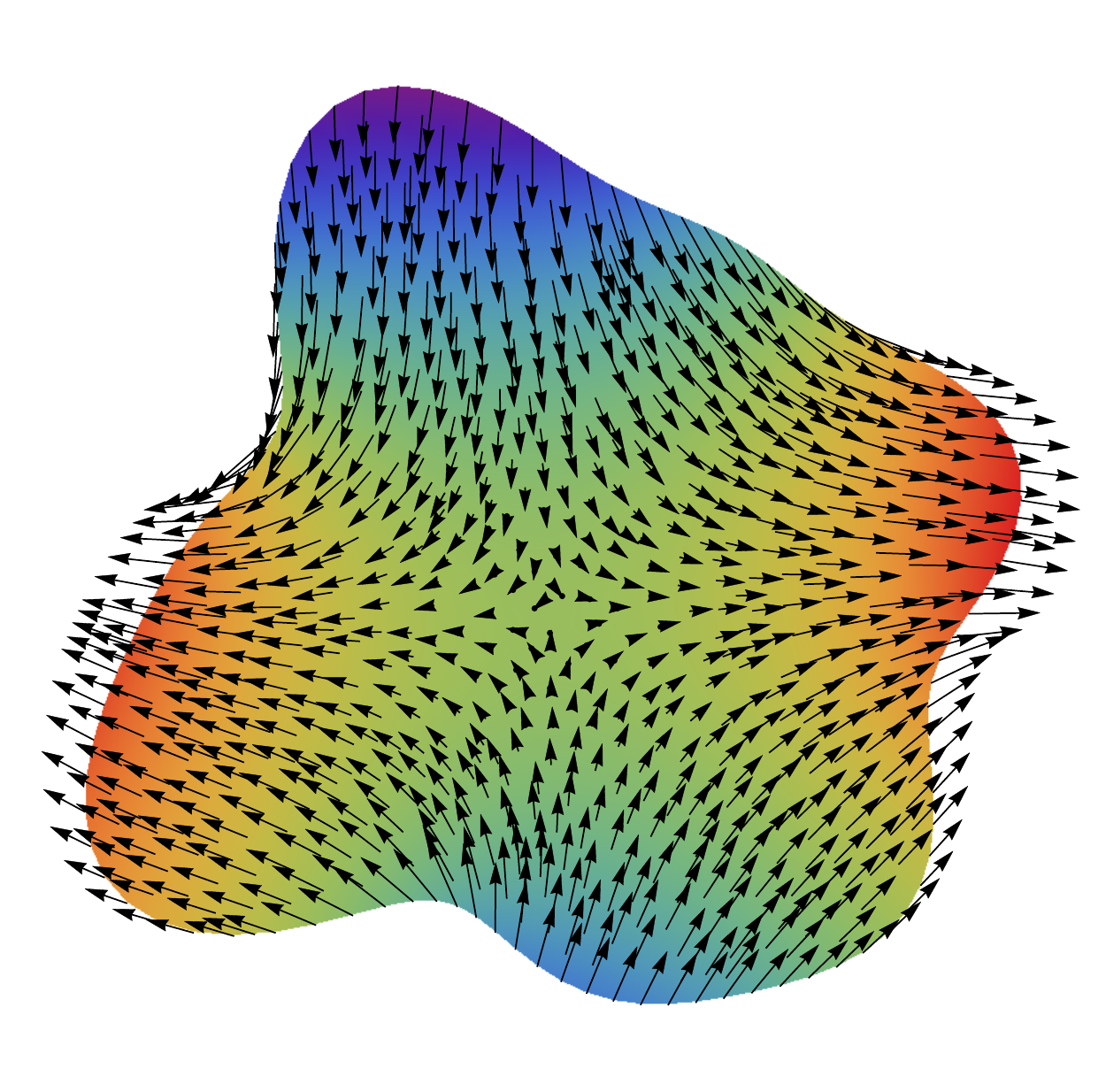} \\%
				\includegraphics[width=\figwidthC]{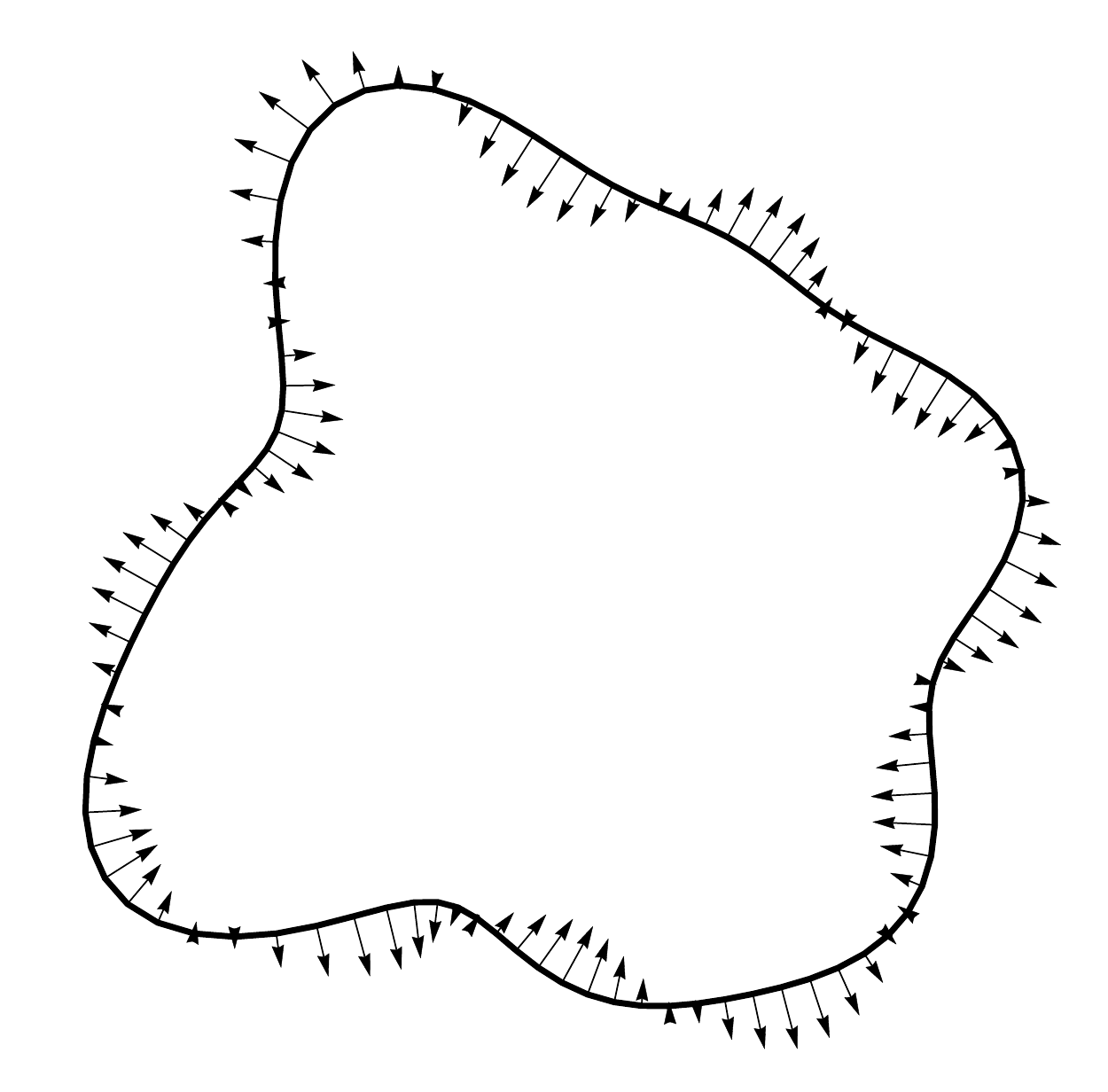} &%
				\hspace{1cm}&%
				\includegraphics[width=\figwidthC]{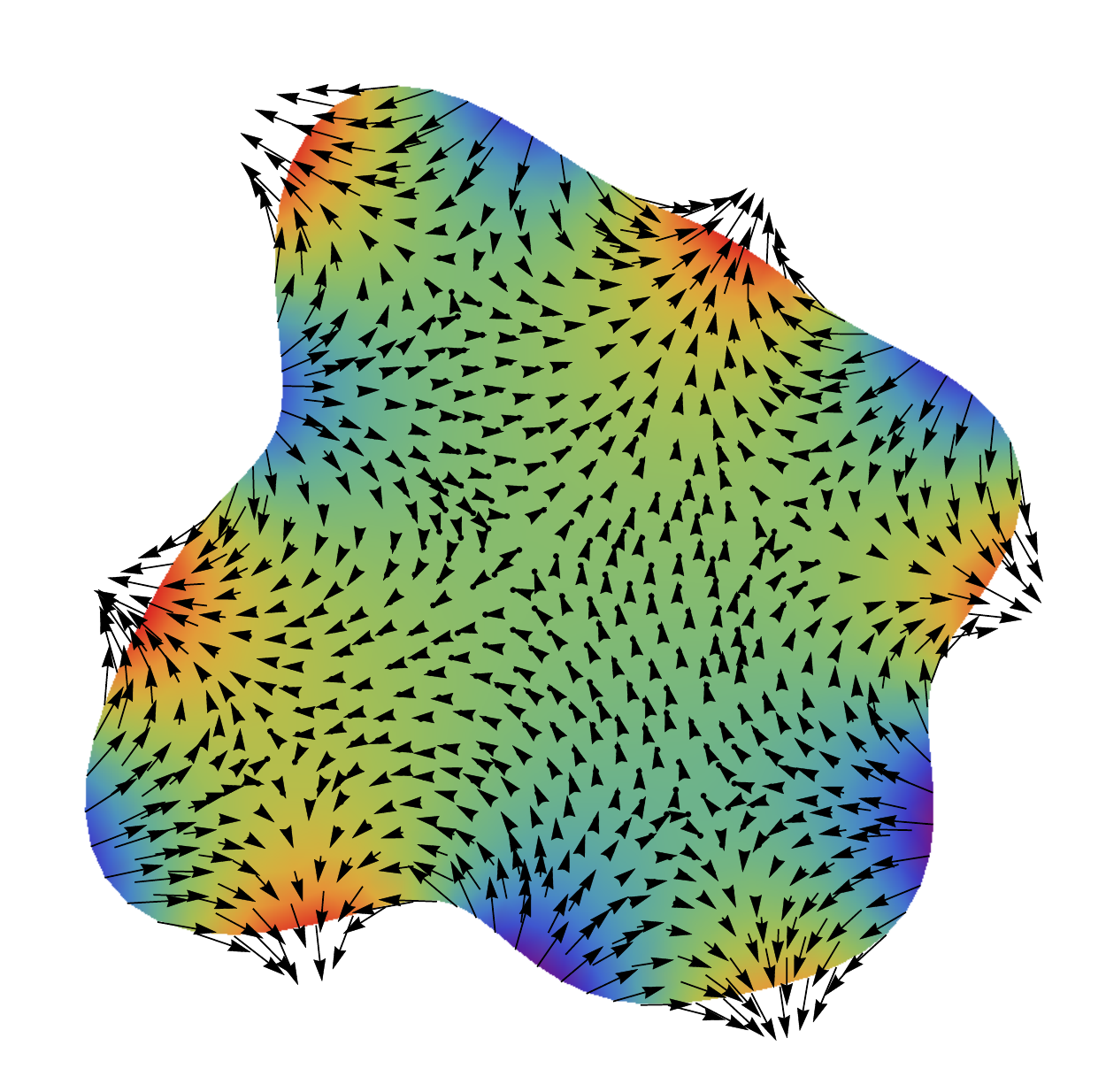}%
			\end{tabular}
			\caption{Lifting of tangential deformation fields. Left column: two different normal deformation fields on a given contour. Right column: lifted flow-fields with constant divergence on the corresponding shape measure. Color shading indicates the potential function that solves the involved Neumann problem.
			In the first example the contour deformation has a low frequency and the lifted flow-field has large amplitudes throughout the interior. In the high-frequency example in the second row, the lifted flow-field has non-vanishing amplitude only near the boundary.
			Note that the lifted flow-field is not normal to the contour at the shape boundary.}
			\label{fig:TangentLifting}			
		\end{figure}
		So far we have established that there is a map $\liftTangent_c$ that takes the (horizontal part of the) tangent space $H_c \emb$ to $\STan\big(\liftPoint(c)\big)$. Let $c_t$ be a path of contours and let $\partial_t c_t$ be the tangent vectors. We need yet to check that $\liftTangent_{c_t}( \partial_t c_t)$ is tangent to $\liftPoint(c_t)$ in the sense of the continuity equation \eqref{eq:ContinuityEq}.
		\begin{theorem}[Commutation of Deformation and Lifting]
			\thlabel{thm:LiftingCommutation}
			Given a contour path $c_t$ which is $C^1$ in time, with normal temporal deformation $a_t$, the following commutation relation holds for all test functions $\phi \in \testFunction$:
			\begin{align}
				\label{eq:LiftingCommutation}
				\frac{d}{dt} \int \phi\,d\liftPoint(c_t) = 
				\int \langle \nabla \phi, \liftTangent_{c_t}(a_t) \rangle \, d\liftPoint(c_t)
			\end{align}
		\end{theorem}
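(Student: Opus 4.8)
The plan is to express both sides of \eqref{eq:LiftingCommutation} as boundary integrals over $\partial\region(c_t)$ and check that they coincide. For the left-hand side, note that $\liftPoint(c_t)$ is the shape measure with density $|\region(c_t)|^{-1}\indicator_{\region(c_t)}$, so $\int \phi\,d\liftPoint(c_t) = |\region(c_t)|^{-1}\int_{\region(c_t)}\phi\,dx$. Differentiating in time by the product rule produces two terms: one from differentiating the volume factor $|\region(c_t)|^{-1}$ and one from differentiating the domain integral $\int_{\region(c_t)}\phi\,dx$. Both are handled by the Shape Derivative Lemma \thref{thm:ShapeDerivative}: applied with $\phi\equiv 1$ it gives $\frac{d}{dt}|\region(c_t)| = \int_{\partial\region(c_t)}\langle \dot c_t\circ c_t^{-1}, n_{c_t}\circ c_t^{-1}\rangle\,ds$, and since the deformation is normal, $\langle \dot c_t\circ c_t^{-1}, n_{c_t}\circ c_t^{-1}\rangle = a_t\circ c_t^{-1}$ on the boundary. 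So the left-hand side becomes
\begin{align}
\frac{d}{dt}\int\phi\,d\liftPoint(c_t) = |\region(c_t)|^{-1}\int_{\partial\region(c_t)}\phi\,(a_t\circ c_t^{-1})\,ds - |\region(c_t)|^{-1}\left(\int_{\partial\region(c_t)}(a_t\circ c_t^{-1})\,ds\right)|\region(c_t)|^{-1}\int_{\region(c_t)}\phi\,dx\,. \nonumber
\end{align}

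For the right-hand side, write $\alpha_t = \liftTangent_{c_t}(a_t) = \nabla u_t$ where $u_t$ solves the Neumann problem \eqref{subeq:TangentLifting}, with $\Delta u_t = S_t = |\region(c_t)|^{-1}\int_{\partial\region(c_t)} a_t\circ c_t^{-1}\,ds$ constant in $\region(c_t)$ and $\langle n,\nabla u_t\rangle = a_t\circ c_t^{-1}$ on the boundary. Then
\begin{align}
\int\langle\nabla\phi,\alpha_t\rangle\,d\liftPoint(c_t) = |\region(c_t)|^{-1}\int_{\region(c_t)}\langle\nabla\phi,\nabla u_t\rangle\,dx\,. \nonumber
\end{align}
Apply Green's formula \eqref{eq:Green-Hdiv} (integration by parts) with $u = \nabla u_t \in H(\ddiv;\region(c_t))$ and $q = \phi$: this yields $\int_{\region(c_t)}\langle\nabla\phi,\nabla u_t\rangle\,dx = \int_{\partial\region(c_t)}\phi\,\langle n,\nabla u_t\rangle\,ds - \int_{\region(c_t)}\phi\,\Delta u_t\,dx = \int_{\partial\region(c_t)}\phi\,(a_t\circ c_t^{-1})\,ds - S_t\int_{\region(c_t)}\phi\,dx$. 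Substituting $S_t$ and multiplying by $|\region(c_t)|^{-1}$ reproduces exactly the two-term boundary expression obtained for the left-hand side. This establishes \eqref{eq:LiftingCommutation}.

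The main subtlety to be careful about is the justification of the Shape Derivative Lemma in this context — it requires $\phi\in C^\infty_{\textnormal{loc}}(\R^2)$, which holds since $\phi\in\testFunction$, and it requires the contour family to be $C^1$ in time, which is assumed; the normality of $a_t$ is exactly the horizontal-parametrization hypothesis so no loss of generality there. One should also check that the time differentiation commutes with the spatial integral $\int_{\region(c_t)}\phi\,dx$ in the product-rule step, but this is precisely the content of \thref{thm:ShapeDerivative} applied to the (time-independent) integrand $\phi$. The only genuinely nontrivial input is the divergence bookkeeping: the constant $S_t$ appearing in the Neumann problem was \emph{defined} (see \eqref{eq:TangentLiftingDivergence}) precisely so that the bulk term $-S_t\int_{\region(c_t)}\phi\,dx$ from integration by parts matches the volume-derivative term from the product rule on the left — so the proof amounts to observing that this definitional choice is exactly what makes the diagram commute.
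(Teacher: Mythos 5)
Your proposal is correct and follows essentially the same route as the paper's proof: the shape derivative lemma \thref{thm:ShapeDerivative} plus the product rule on $|\region(c_t)|^{-1}\int_{\region(c_t)}\phi\,dx$, combined with an integration by parts (Green's formula, equivalently the divergence theorem used in the paper) and the observation that the definitional choice of the constant $S_t$ in \eqref{eq:TangentLiftingDivergence} produces exactly the cancellation needed. The only cosmetic difference is that you equate both sides as boundary integrals while the paper pushes the left-hand side all the way to the right-hand side; the ingredients and the bookkeeping are identical.
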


		The implication is that the measure path $F(c_t)$ generated by lifting $c_t$ satisfies the continuity equation \eqref{eq:ContinuityEq} together with the flow-field $f(a_t)$ generated by lifting the tangent path $a_t$.
		In analogy to \eqref{eq:ContinuityEq} we can write for \eqref{eq:LiftingCommutation}:
		\begin{align}
			\frac{d}{dt} \liftPoint(c_t) = - \nabla \left( \liftTangent_{c_t}(a_t)\,\liftPoint(c_t) \right)
		\end{align}
		This corresponds to the following commutation diagram (cf.~Fig.~\ref{fig:LiftingCommutation}):
		\begin{align*}
			\begin{array}{ccc}
				c_t & \xrightarrow{\text{time derivative}} & a_t = \frac{d}{dt} c_t \\
				\downarrow \text{\scriptsize{lift}} & & \downarrow \text{\scriptsize{lift}} \\
				\liftPoint(c_t) & \xrightarrow{\text{time derivative}} & \liftTangent_{c_t}(\frac{d}{dt} c_t) \equiv \frac{d}{dt} \liftPoint(c_t)
			\end{array}
		\end{align*}	
		
		\begin{proof}
			From \thref{thm:ShapeDerivative} we have:
			\begin{align}
				\frac{d}{dt} \int_{\region(c_t)} \phi\,dx = & \int_{\partial \region(c_t)} a_t \circ c_t^{-1}\,\phi\,ds
			\end{align}
			Then one finds:
			\begin{align}
				& \frac{d}{dt} \int \phi\,dF(c_t) \\
				= & \frac{d}{dt} \left| \region(c_t) \right|^{-1} \int_{\region(c_t)} \phi \,dx \\
				= & \left( \frac{d}{dt} \left| \region(c_t) \right|^{-1} \right) \int_{\region(c_t)} \phi(x)\,dx +
					\left| \region(c_t) \right|^{-1} \left( \frac{d}{dt} \int_{\region(c_t)} \phi \,dx \right) \\
				= & - \left| \region(c_t) \right|^{-2} \int_{\partial \region(c_t)} a_t \circ c_t^{-1}\,ds \cdot \int_{\region(c_t)} \phi\,dx
					+ \left| \region(c_t) \right|^{-1} \int_{\partial \region(c_t)} a_t \circ c_t^{-1}\, \phi\,ds \\
				= & - \left| \region(c_t) \right|^{-2} \int_{\partial \region(c_t)} a_t \circ c_t^{-1}\,ds \cdot \int_{\region(c_t)} \phi\,dx
					+ \left| \region(c_t) \right|^{-1} \int_{\region(c_t)} \ddiv \big( \liftTangent_{c_t}(a_t)\,\phi \big)\,dx \\
				\intertext{(in the second term the properties of the lifting $\liftTangent_{c_t}(a_t)$ and the divergence theorem were used)}
				= & - \left| \region(c_t) \right|^{-2} \int_{\partial \region(c_t)} a_t \circ c_t^{-1}\,ds \cdot \int_{\region(c_t)} \phi\,dx \nonumber \\
					& + \left| \region(c_t) \right|^{-1} \int_{\region(c_t)} \underbrace{\big( \ddiv \liftTangent_{c_t}(a_t) \big)}_{= \left| \region(c_t) \right|^{-1} \int_{\partial \region(c_t)} a_t \circ c_t^{-1}\, ds} \,\phi\,dx							
					+ \left| \region(c_t) \right|^{-1} \int_{\region(c_t)} \langle \liftTangent_{c_t}(a_t), \nabla \phi \rangle \,dx \\
				\intertext{($\ddiv \liftTangent(a_t)$ is determined by $\liftTangent(a_t)= \nabla u$, where $u$ is the solution to \eqref{subeq:TangentLifting})}
				= & \int \langle \liftTangent_{c_t}(a_t), \nabla \phi \rangle \,d\liftPoint(c_t) & & \qedhere
			\end{align}	
		\end{proof}
		
		This means, the region deformations encoded in $a_t$ and $\liftTangent_{c_t}(a_t)$ in their respective representations coincide.

	\subsection{Paths in Contour and Shape Measure Description}
		In the assumptions to \thref{thm:ShapeMeasurePaths} we have established a regularity class of paths of shape measures. We will now show that smooth paths on $\emb$ transform into such paths via lifting by $\liftPoint$ and vice versa that `de-lifting' of such paths on $\SMeas$ will result in smooth paths on $\emb$.
		\begin{proposition}[Contours $\rightarrow$ Measures]
			\thlabel{thm:Contours2Measures}
			Let $[0,1] \ni t \mapsto c_t \in \emb$ be a $C^\infty$-family of contours. Then the shape measure path generated by lifting, $\mu_t = \liftPoint(c_t)$ together with $\alpha_t = \liftTangent_{c_t}(\partial_t c_t)$ satisfies the conditions of \thref{thm:ShapeMeasurePaths}.
		\end{proposition}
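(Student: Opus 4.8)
The plan is to verify, one at a time, the five hypotheses (\ref{item:SMP.ShapeMeasure})--(\ref{item:SMP.AlphaSup}) of \thref{thm:ShapeMeasurePaths} for the pair $\mu_t = \liftPoint(c_t)$, $\alpha_t = \liftTangent_{c_t}(\partial_t c_t)$; four of them are short, and essentially all the effort goes into the uniform regularity estimate behind (\ref{item:SMP.AlphaSup}). First, by \thref{thm:HorizontalParametrization} and Remark~\ref{rem:HorizontalParametrization} I would reparametrize the family — which changes neither $\liftPoint(c_t)$ nor the lifted flow-field — so that $\partial_t c_t = a_t\cdot n_{c_t}$ with $a_t\in C^\infty(S^1,\R)$ depending smoothly on $t$, so that $\alpha_t = \liftTangent_{c_t}(a_t)$ is well defined. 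Hypothesis (\ref{item:SMP.ShapeMeasure}) is then immediate: $c_0\in\emb$ bounds an open, simply connected $C^\infty$ domain, so $\liftPoint(c_0)\in\SMeas$ by Definition~\ref{def:ContourLifting}. Hypothesis (\ref{item:SMP.AlphaSTan}), $\alpha_t\in\STan(\mu_t)$, is exactly what the construction in Definition~\ref{def:TangentLifting} delivers: the extended Neumann potential $u_t$ lies in $\testFunction$ and satisfies $\Delta u_t = \const$ on $\region(c_t)$, the interior of $\spt \mu_t$. Hypothesis (\ref{item:SMP.Continuity}), that $(\mu_t,\alpha_t)$ solves the continuity equation, is precisely \thref{thm:LiftingCommutation}.

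For hypothesis (\ref{item:SMP.HoldAll}): the map $(t,\theta)\mapsto c_t(\theta)$ is continuous on the compact set $[0,1]\times S^1$, so its image is compact and contained in an open ball $B_R$. Since $\R^2\setminus B_R$ is connected and misses $c_t(S^1)$, it lies in the unbounded complementary component of $c_t(S^1)$, whence $\region(c_t)\subset B_R$ and $\spt \mu_t = \overline{\region(c_t)}\subset\overline{B_R}$ for every $t$; one takes $\hat\region$ to be an open ball of radius slightly larger than $R$ (enlarged once more below so as to also absorb the supports of the $\alpha_t$).

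The main obstacle is hypothesis (\ref{item:SMP.AlphaSup}): for every positive integer $p$ one needs $\alpha_t\in C^p_0(\hat\region,\R^2)$ with $\int_0^1\|\alpha_t\|_{\hat\region,p,\infty}\,dt<\infty$, i.e.\ $\alpha\in\mc{X}_p(\hat\region)$ in the sense of \eqref{eq:YounesFlowFieldSpace}. This is the first point at which the extension left unspecified in Definition~\ref{def:TangentLifting} matters — for (\ref{item:SMP.AlphaSTan})--(\ref{item:SMP.HoldAll}) any extension compatible with Remark~\ref{rem:STanIdentification} works — so a judicious choice must be made, and the difficulty is that the Neumann problem \eqref{subeq:TangentLifting} for $u_t$ lives on the \emph{moving} domain $\region(c_t)$. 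To cope with this I would fix a smooth family of diffeomorphisms $\Psi_t$ of $\R^2$ with $\Psi_0=\id$, each a compactly supported perturbation of the identity, and $\Psi_t(\region(c_0)) = \region(c_t)$ — obtained from the isotopy $c_t$ by a standard isotopy-extension / tubular-neighbourhood argument — so that all derivatives of $\Psi_t$ and $\Psi_t^{-1}$ are bounded uniformly in $t\in[0,1]$. Then $\tilde u_t := u_t\circ\Psi_t$ solves, on the \emph{fixed} domain $\region(c_0)$, a variable-coefficient elliptic Neumann problem of exactly the type admitted by \thref{rem:MoreRegularity}, with coefficient matrix $A_t$ and data depending smoothly on $t$; by compactness of $[0,1]$ the $A_t$ share common ellipticity constants $0<\lambda<\Lambda$ and common bounds on all their derivatives, so the uniform version of \thref{thm:Regularity} recorded in \thref{rem:MoreRegularity}, combined with the Sobolev embedding, yields $\sup_{t\in[0,1]}\|\tilde u_t\|_{C^k(\overline{\region(c_0)})}<\infty$ for every $k$, once the additive constant is pinned down by, say, $\int_{\region(c_0)}\tilde u_t\,dx = 0$. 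Now extend each $\tilde u_t$ to $\R^2$ by a single fixed bounded linear extension operator $E$, multiply by a single fixed cutoff $\chi\in\testFunction$ equal to $1$ on a neighbourhood of $\overline{\region(c_0)}$, and set $u_t := (\chi\cdot E\tilde u_t)\circ\Psi_t^{-1}$, $\alpha_t := \nabla u_t$. On $\region(c_t)$ this agrees with the original Neumann solution, so it is an admissible extension, $u_t\in\testFunction$, and $\supp \alpha_t$ lies in the compact set $\bigcup_t\Psi_t(\supp\chi)$, which one arranges to sit inside $\hat\region$ by enlarging $\hat\region$. Finally, the chain rule bounds $\|\alpha_t\|_{\hat\region,p,\infty}$ by a fixed polynomial expression in the $C^{p+1}$-norms of $\tilde u_t$, $E$, $\chi$, $\Psi_t$ and $\Psi_t^{-1}$, all uniformly bounded in $t$; since $[0,1]$ has finite measure and $t\mapsto\alpha_t$ is continuous (hence measurable) by construction, $\int_0^1\|\alpha_t\|_{\hat\region,p,\infty}\,dt<\infty$ for every $p$. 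This verifies (\ref{item:SMP.AlphaSup}), and with it all hypotheses of \thref{thm:ShapeMeasurePaths}, so $\mu_t$ is an absolutely continuous shape measure path.
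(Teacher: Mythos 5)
Your proposal is correct and follows essentially the same route as the paper: conditions (\ref{item:SMP.ShapeMeasure})--(\ref{item:SMP.Continuity}) are dispatched via Definition~\ref{def:ContourLifting}, Definition~\ref{def:TangentLifting} and \thref{thm:LiftingCommutation}, and the heart of the matter, condition (\ref{item:SMP.AlphaSup}), is handled exactly as in the paper by pulling the moving-domain Neumann problem back to the fixed domain $\region(c_0)$ through a family of diffeomorphisms, invoking the uniform variable-coefficient elliptic regularity of \thref{rem:MoreRegularity} together with the Sobolev embedding. The only deviations are cosmetic: you build the diffeomorphisms by isotopy extension with uniform bounds on all of $[0,1]$ (and spell out the extension via a fixed extension operator and transported cutoff), where the paper integrates an extended normal flow-field via \thref{thm:FlowDiffs} and restores uniformity by restarting at finitely many times, and your bounded hold-all comes from compactness of the image of $(t,\theta)\mapsto c_t(\theta)$ rather than the paper's finite cover by $\varepsilon$-tubes.
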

					
		\begin{proof}
			Conditions (\ref{item:SMP.ShapeMeasure}) and (\ref{item:SMP.AlphaSTan}) follow immediately from $\mu_t = \liftPoint(c_t) \in \SMeas$ and $\alpha_t = \liftTangent_{c_t}(\partial_t c_t)$. Condition (\ref{item:SMP.Continuity}) is implied by \thref{thm:LiftingCommutation}.
			
			Condition (\ref{item:SMP.HoldAll}) is established as follows: let $\region(c_t)$ be the interior of the region enclosed by contour $c_t$. Let further, for any $\region$
			\begin{align}
				d(x,\region) = \inf \{ \| x-y\| \,\colon\, y \in \region \} \\
				\intertext{and}
				\region(c_t,\varepsilon) = \left\{ x \in \R^2 \,\colon\,
					d\big(x,\region(c_t) \big) < \varepsilon \right \} \,.
			\end{align}
			Each set $\region(c_t,\veps)$ is open and bounded.
			The topology on $\emb$ guarantees that for any $t \in [0,1]$ and $\varepsilon > 0$ there is some $\delta > 0$ such that
			\[ c_{t'}(S^1) \subset \region(c_t,\varepsilon) \quad \text{for all} \quad t' \in \tau = [t-\delta, t+\delta] \cap [0,1]\,. \]
			Pick then a set of $(t_i,\varepsilon_i)$ such that the corresponding intervals $\tau_i$ cover $[0,1]$. Since $[0,1]$ is compact, there must be a finite subcovering. We assume $\{(t_i,\veps_i)\}_{i=1}^m$ induces such a covering. Then we find
			\begin{align}
				\spt \mu_t = \ol{\region(c_t)} \subset \hat{\region} = \bigcup_{i=1}^m \region(c_{t_i},\veps_i)
			\end{align}
			for all $t \in [0,1]$ where $\hat{\region}$ is open and bounded.
			
			Let us finally turn to the last condition (\ref{item:SMP.AlphaSup}).
			First extend the normal boundary deformations $\partial_t c_t$ to a flow-field $[0,1] \ni t \mapsto \beta_t \in C^\infty(\hat{\region},\R^2)$, for example as outlined in \cite[Chap.~4,~Sect.~3.3.2]{ZolesioShape2011}. This construction can be designed such that $\beta \in \mc{X}_p(\hat{\region})$ for any integer $p \geq 0$. Let $\varphi_t$ be the family of diffeomorphisms induced by $\beta$, according to \thref{thm:FlowDiffs}.
			Note, that $\varphi_t$ is in general not volume preserving.
			
			The weak solution to Poisson's equation, describing the tangent vector lifting at time $t$ is given by the minimizer w.r.t.~$u \in H^1(\region(c_t))/\R$ of \eqref{eq:NeumannEnergy} with the following parameters
			\begin{align}
				E(u, \region(c_t), f_t ,g_t) & = \frac{1}{2} \int_{\region(c_t)} \| \nabla u\|^2\,dx + \int_{\region(c_t)} f_t\,u\,dx - \int_{\partial \region(c_t)} g_t\,u\,ds \\
				\intertext{with}
				g_t & = \langle \partial_t c_t, n_{c_t} \rangle \quad \text{and} \quad f_t = \frac{1}{|\region(c_t)|} \int_{\partial \region(c_t)} g_t\,ds \,.\\
				\intertext{By means of function space parametrization \cite[Chap.~10,~Sect.~2.2]{ZolesioShape2011} we can express $H^1(\region(c_t))/\R$ in terms of $H^1(\region(c_0))$ and $\varphi_t$:}
				H^1(\region(c_t))/\R & = \left \{ u \circ \varphi_t^{-1} \,\colon\, u \in H^1(\region(c_0))/\R \right\} \\
				\intertext{For some $u \circ \varphi_t^{-1} \in H^1(\region(c_t))/\R$ we then find}
				E(u \circ \varphi_t^{-1}, \region(c_t), f_t ,g_t) & = \frac{1}{2} \int_{\region(c_t)} \| \nabla (u \circ \varphi_t^{-1})\|^2\,dx + \int_{\region(c_t)} f_t\,(u \circ \varphi_t^{-1})\,dx \nonumber \\
				& \qquad - \int_{\partial \region(c_t)} g_t\,(u \circ \varphi_t^{-1})\,ds \\
				& = \frac{1}{2} \int_{\region(c_0)} \langle A_t \nabla u, \nabla u \rangle dx + \int_{\region(c_0)} \tilde{f}_t\,u\,dx - \int_{\partial \region(c_0)} \tilde{g}_t\,u\,ds \\
				& = E(u, \region(c_0), A_t, \tilde{f}_t, \tilde{g}_t)
				\intertext{with}
				A_t & = | \det J_{\varphi_t} | \cdot \left(J_{\varphi_t}^{-1} \right)\,\left(J_{\varphi_t}^{-1} \right)^\T \\
				\intertext{and}
				\tilde{f}_t & = | \det J_{\varphi_t} | \cdot (f \circ \varphi_t) \quad \text{and} \quad
					\tilde{g}_t = | \det J_{{\varphi_t}|_{\partial \region(c_0)}} | \cdot (g \circ \varphi_t)
			\end{align}
			where ${\varphi_t}|_{\partial \region(c_0)}$ denotes the restriction of $\varphi_t$ to the submanifold $\partial \region(c_0)$ and $J_{{\varphi_t}|_{\partial \region(c_0)}}$ is the Jacobian of this restriction.
			
			Since $\varphi_t$ is continuous in $t$ w.r.t.~the topology of uniform convergence in all its derivatives (\thref{thm:UniformDPhiContinuity}) there is a $t>0$ such that the matrix $A_{t'}$ will be positive-definite with bounds $0 < \lambda < \Lambda$ such that $A_{t'}$ satisfies \eqref{eq:LambdaBounds} for all $t' \in [0,t[$. $\tilde{f}_{t'}$ and $\tilde{g}_{t'}$ are always $C^\infty\big(\region(c_0)\big)$ and continuous in time w.r.t.~the supremum norm in any derivative. Also, the map $u \rightarrow u \circ \varphi_{t'}^{-1}$ is continuous w.r.t.~any Sobolev norm between the connected spaces.
			
			Hence, by virtue of the discussion in \thref{rem:MoreRegularity} the minimizers $u_{t'}$ to the functionals $E(\cdot,\region(c_{t'}), f_{t'} ,g_{t'})$ for $t' \in [0,t[$ have a uniformly bounded Sobolev norm $\|u_{t'}\|_{W^{m,p}(\region)/\R}$ for any positive integer $m$ and $1 < p < \infty$. Repeating this construction at different times until the whole interval $[0,1]$ is covered by finitely many `starting points', one can extend the uniform bound to $[0,1]$. By the embedding theorem and by taking the gradient $\alpha_t = \nabla u_t$ it follows then that $\alpha \in \mc{X}_p(\hat{\region})$ for any non-negative integer $p$.
		\end{proof}

		And similarly for the opposite direction:
		\begin{proposition}[Measures $\rightarrow$ Contours]
			\thlabel{thm:Measures2Contours}
			Let $(\mu_t,\alpha_t)$ be a pair of shape measure and flow field paths satisfying the conditions for \thref{thm:ShapeMeasurePaths}.
			Then there is a smooth path $[0,1] \ni t \mapsto c_t \in \emb$ such that
			$\liftPoint(c_t) = \mu_t$.
		\end{proposition}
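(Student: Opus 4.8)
The plan is to recycle the family of diffeomorphisms already constructed in the proof of \thref{thm:ShapeMeasurePaths} and use it to transport a single reference contour along the given path. Recall that, under the hypotheses of \thref{thm:ShapeMeasurePaths}, that proof integrates the flow field $\alpha$ via \thref{thm:FlowDiffs} into a family of $C^\infty$-diffeomorphisms $\varphi_t$ of $\hat\region$ satisfying $\partial_t \varphi_t = \alpha_t \circ \varphi_t$, $\varphi_0 = \id$, and establishes $\mu_t = \varphi_{t\,\sharp}\mu_0$. I would take this $(\varphi_t)_{t \in [0,1]}$ as the point of departure.

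Since $\mu_0$ is a shape measure, \thref{thm:LiftPointInverse} provides a contour $c_0 \in \emb$ with $\region(c_0) = \interior \spt \mu_0$, i.e.\ $\liftPoint(c_0) = \mu_0$. Define $c_t := \varphi_t \circ c_0$; this lies in $\emb$ as the composition of a $C^\infty$-embedding with a $C^\infty$-diffeomorphism. From $\mu_t = \varphi_{t\,\sharp}\mu_0$ and the fact that $\varphi_t$ is a homeomorphism of $\hat\region$ with $\spt \mu_0 \subset \hat\region$ compact, one reads off $\spt \mu_t = \varphi_t(\spt \mu_0)$; since $\varphi_t$ carries interior to interior, boundary to boundary, and preserves simple connectedness and $C^\infty$-smoothness, $\region(c_t) := \varphi_t(\region(c_0)) = \interior \spt \mu_t$ is an admissible domain in the sense of Definition \ref{def:ContourLifting}, parametrized on its boundary by $c_t$. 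As $\mu_t$ is itself a shape measure supported on $\overline{\region(c_t)}$ (the conclusion of \thref{thm:ShapeMeasurePaths}), both $\mu_t$ and $\liftPoint(c_t)$ have density $|\region(c_t)|^{-1}\,\indicator_{\region(c_t)}$, hence $\liftPoint(c_t) = \mu_t$.

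What remains, and what I expect to be the main obstacle, is to check that $t \mapsto c_t$ is a smooth path in $\emb$, i.e.\ smooth in the topology of uniform convergence in all spatial derivatives. Spatial smoothness of each $c_t$ is immediate; for the dependence on $t$ I would invoke \thref{thm:FlowDiffs}, which shows $\varphi_t$ is $C^\infty$ in space with every $\partial_I \varphi_t$ evolving by an explicit ODE driven by $\alpha$, together with \thref{thm:UniformDPhiContinuity}, which gives continuity of $t \mapsto \varphi_t$ into each $C^p(\hat\region,\R^2)$; composition with the fixed $c_0$ then yields continuity of $t \mapsto c_t$ into $\emb$. Differentiating gives $\partial_t c_t = (\partial_t \varphi_t)\circ c_0 = \alpha_t \circ c_t \in C^\infty(S^1,\R^2) = T_{c_t}\emb$, and the same scheme --- using $\alpha \in \mc{X}_p(\hat\region)$ for every $p$ to control all spatial derivatives of $\varphi_t$ uniformly in $t$, and commuting spatial and temporal differentiation through the composition $\varphi_t \circ c_0$ --- shows the path is as regular in $t$ as the flow-field data allows, in particular $C^1$, which is what is needed for \thref{thm:LiftingCommutation} and \thref{thm:Contours2Measures} to be applicable. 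The delicate bookkeeping is entirely in this last step: producing a contour path with the correct image is essentially free once the $\varphi_t$ are available, whereas verifying smoothness of $t \mapsto \varphi_t \circ c_0$ as a curve in the Fréchet manifold $\emb$ is where the uniform-in-$t$ derivative estimates of \thref{thm:FlowDiffs} and \thref{thm:UniformDPhiContinuity} do the work.
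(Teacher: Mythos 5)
Your proposal is correct and follows essentially the same route as the paper: take $c_0$ parametrizing $\partial\region$ of $\mu_0$, integrate $\alpha$ into the diffeomorphisms $\varphi_t$ exactly as in the proof of \thref{thm:ShapeMeasurePaths}, set $c_t = \varphi_t \circ c_0$, and conclude $\liftPoint(c_t)=\mu_t$ from $\mu_t = \varphi_{t\,\sharp}\mu_0$. The regularity-in-$t$ step is also the paper's: expand $\partial_\theta^n(\varphi_t\circ c_0)$ by a chain-rule formula in the spirit of \cite[Lemma 8.3]{YounesShape2010} and use \thref{thm:UniformDPhiContinuity} to get uniform convergence of all $\theta$-derivatives, hence convergence in $C^\infty(S^1,\R^2)$ and thus in $\emb$.
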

		
		\begin{proof}
			Let $c_0$ be a contour that parametrizes the boundary of the region given by $\mu_0$.
			Then, as in the proof for \thref{thm:ShapeMeasurePaths} integrate $\alpha_t$ into a family of $C^\infty$-diffeomorphisms $\varphi_t$. As the pushforward of $\mu_0$ under $\varphi_t$ yields $\mu_t$ we can deduce that $c_t = \varphi_t \circ c_0$ parametrizes the boundary of $\mu_t$, i.e. $\liftPoint(c_t) = \mu_t$. Since $\varphi_t$ is a $C^\infty$-diffeomorphism at all times, $c_t$ will be a $C^\infty$-embedding $S^1 \rightarrow \R^2$ at all times, hence it will really be a path in $\emb$.
			Recall from \thref{thm:ConvergenceOnSFold}: since $\emb$ is an open submanifold of the space $C^\infty(S^1,\R^2)$, we show continuity of the path there, continuity in $\emb$ then follows. Convergence in $C^\infty(S^1,\R^2)$ is verified by uniform convergence on $S^1$ in all derivatives separately. 
			
			In analogy to \cite[Lemma 8.3]{YounesShape2010}	it is easy to proof by induction that for any non-negative integer $n$
			\begin{align}
				c^{(n)}_t &  = \partial_\theta^{n} (\varphi_t \circ c_0) \\
					& = \sum_{I : |I|\leq n} \big( (\partial_I \varphi_t) \circ c_0 \big)\,B_{I,n}(c_0)
			\end{align}
			where $B_{I,n}(c_0)$ is a linear combination of terms $(c_0^{(n_1)})_{i_1}\,(c_0^{(n_2)})_{i_2} \ldots (c_0^{(n_q)})_{i_q}$ such that the tuple $I = (i_1,i_2,\ldots,i_q)$ and $n = \sum_{r=1}^q n_r$.
			By virtue of \thref{thm:UniformDPhiContinuity} $\partial_I \varphi_{t_k} \rightarrow \partial_I \varphi_{t}$ uniformly as $t_k \rightarrow t$ for any multi-index $I$. Hence, uniform convergence $c^{(n)}_{t_k} \rightarrow c^{(n)}_k$ is implied.
		\end{proof}
		
	\subsection{Shape Measures as a Manifold}
		We have now established bijections between $\sfold$ and $\SMeas$ and between the deformations $T_{[c]} \sfold$ and $\STan\big(\liftPointEquiv([c])\big)$. Further, we have shown how regular paths in contour and measure descriptions transform into each other. In this section we will formally establish that the set of shape measures $\SMeas$ is a manifold, diffeomorphic to $\sfold$.
		
		Equip $\SMeas$ with the topology induced by the Wasserstein metric $\WD$. Then it is easy to see that $\liftPointEquiv$ is continuous but $\liftPointEquiv^{-1}$ is not. 

		\begin{proposition}[Continuity of $\liftPointEquiv$]
			\thlabel{thm:ContinuityLiftPointEquiv}
			Equip the set of shape measures with the topology induced by the Wasserstein metric $\WD$. Then the map $\liftPointEquiv$ is continuous.
		\end{proposition}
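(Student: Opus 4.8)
The plan is to reduce the assertion to a statement about sequences and then exhibit, for each convergent sequence, an explicit coupling whose transport cost vanishes. Since $\sfold$ is a manifold modelled on a metrizable (Fréchet) model space it is first countable, and $(\SMeas,\WD)$ is metric, so it suffices to show: if $[c_n]\to[c]$ in $\sfold$ then $\WD\big(\liftPointEquiv([c_n]),\liftPointEquiv([c])\big)\to 0$. By \thref{thm:ConvergenceOnSFold} we may pass to representatives $c_n\in[c_n]$, $c\in[c]$ with $c_n\to c$ in $\emb$, i.e.\ uniformly together with all derivatives; in fact only $C^1$-convergence of the parametrizations (plus the resulting uniform bound $\sup_n\sup_\theta|c_n(\theta)|<\infty$) will be needed. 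Write $\region_n=\region(c_n)$, $\region=\region(c)$ for the enclosed open domains, $m_n=|\region_n|$ and $m=|\region|\in(0,\infty)$.

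The geometric core is the claim that $|\region_n\triangle\region|\to 0$. For $n$ large, $C^1$-closeness of $c_n$ to $c$ forces $c_n(S^1)$ to lie in a fixed tubular neighbourhood of the smooth compact curve $\partial\region$ and to project diffeomorphically onto $\partial\region$ under the tubular retraction; hence $\partial\region_n$ is a normal graph over $\partial\region$ with small amplitude $h_n$, $\|h_n\|_\infty\to 0$, and $\region_n\triangle\region$ is contained in the thin tube swept by these normal displacements, so $|\region_n\triangle\region|\le \big(\mathrm{length}(\partial\region)+o(1)\big)\,\|h_n\|_\infty\to 0$. Consequently $m_n\to m$, and the overlap mass of the two normalized densities satisfies
\[
	\lambda_n \;:=\; \int_{\R^2}\min\!\Big(\tfrac{1}{m_n}\indicator_{\region_n},\,\tfrac{1}{m}\indicator_{\region}\Big)\,dx
	\;=\; |\region_n\cap\region|\cdot\min\!\big(\tfrac{1}{m_n},\tfrac{1}{m}\big)
	\;\longrightarrow\; m\cdot\tfrac1m \;=\; 1 .
\]
Moreover, the uniform bound on the $c_n$ places all $\overline{\region_n}$ and $\overline\region$ in a common bounded open set $\hat\region$.

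To finish, build $\mu_n\in\coupling\big(\liftPoint(c_n),\liftPoint(c)\big)$ that leaves the common mass in place: let the restriction of $\mu_n$ to the diagonal $\{(x,x)\}$ have density $\min\!\big(\tfrac1{m_n}\indicator_{\region_n},\tfrac1m\indicator_{\region}\big)$ (total mass $\lambda_n$, zero cost), and couple the two leftover marginals — each a nonnegative measure of total mass $1-\lambda_n$ supported in $\hat\region$ — by any element of $\MeasGen(\hat\region\times\hat\region)$. Since on the remainder one has $\|x-y\|^2\le\diam(\hat\region)^2$,
\[
	\WD\big(\liftPoint(c_n),\liftPoint(c)\big)^2
	\;\le\; \int_{\R^2\times\R^2}\|x-y\|^2\,d\mu_n(x,y)
	\;\le\; \diam(\hat\region)^2\,(1-\lambda_n)\;\longrightarrow\;0 ,
\]
which gives sequential continuity of $\liftPointEquiv$, hence continuity.

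The only nontrivial step is the geometric lemma $|\region_n\triangle\region|\to0$: one must turn $C^1$-convergence of the boundary parametrizations into the normal-graph picture for $\partial\region_n$ (via the tubular neighbourhood theorem for $\partial\region$) so that the symmetric difference really is confined to a shrinking tube; everything afterwards — convergence of volumes, the overlap-mass identity, and the crude coupling bound — is routine bookkeeping. It is worth noting that this same picture explains why $\liftPointEquiv^{-1}$ is \emph{not} continuous: high-frequency, small-amplitude oscillations of the boundary move almost no mass, so contours that are far apart in the $C^\infty$ topology can have $\WD$-arbitrarily-close (indeed identical) liftings.
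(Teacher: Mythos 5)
Your proposal is correct, and it shares the paper's geometric core while differing in the concluding step. Like the paper, you reduce to sequences (you justify this via first countability, which the paper leaves implicit), pass to representatives $c_n \to c$ in $\emb$ via \thref{thm:ConvergenceOnSFold}, and show that $\region(c_n) \triangle \region(c)$ is confined to a shrinking tube around $\partial\region(c)$ so that $|\region(c_n)| \to |\region(c)|$; the paper gets this from uniform ($C^0$) convergence alone, whereas you invoke $C^1$-closeness and the tubular neighbourhood theorem to realize $\partial\region(c_n)$ as a small normal graph — a slightly stronger hypothesis than needed, but freely available since convergence in $\emb$ is uniform in all derivatives. Where you genuinely diverge is the last step: the paper deduces narrow convergence from convergence against test functions, notes that the uniformly bounded supports give convergence of second moments, and then cites the standard characterization of $\WD$-convergence \cite[Thm.~2.7]{Ambrosio2013}; you instead build an explicit coupling that fixes the overlap mass $\min\big(\tfrac{1}{m_n}\indicator_{\region(c_n)},\tfrac{1}{m}\indicator_{\region(c)}\big)$ on the diagonal and transports only the leftover mass $1-\lambda_n$ within a common bounded set, giving the quantitative bound $\WD^2 \leq \diam(\hat\region)^2\,(1-\lambda_n)$. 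Your route is more elementary and self-contained (no appeal to the narrow-convergence characterization) and yields an explicit rate in terms of $|\region(c_n)\triangle\region(c)|$, at the cost of a bit more bookkeeping; the paper's route is shorter given the cited theorem. One small wording fix: the leftover masses must be coupled by an actual coupling of the two residual measures of mass $1-\lambda_n$ (e.g.\ their normalized product), not by an arbitrary element of $\MeasGen(\hat\region\times\hat\region)$ — your cost estimate is exactly what that normalized product delivers, so nothing else changes.
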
		
		
		\begin{proof}
			Let $\{[c_n]\}_n$ be a sequence in $\sfold$ converging to some $[c]$, where by $[\cdot]$ we denote the equivalence class of reparametrizations of a given element of $\sfold$. Hence, there is a sequence of contours $\{c_n\}_n$ and a contour $c$ in $\emb$ with $c_n \in [c_n], c \in [c]$ such that $c_n \rightarrow c$ in $\emb$ (\thref{thm:ConvergenceOnSFold}).
			
			Since $c_n \rightarrow c$ uniformly, there exists for any $\veps > 0$ some $n(\veps) \in \N$ such that for $n > n(\veps)$ the contour $c_n$ in $\R^2$ lies completely within a tube of thickness $\varepsilon$ (both inwards and outwards) around the contour $c$. Anything within the inner boundary of the tube lies within both $\region(c_n)$ and $\region(c)$ and anything beyond the outer boundary is in neither of the two sets. The area of the tube goes to $0$ as $\varepsilon \rightarrow 0$. Therefore also $|\region(c_n)| \rightarrow |\region(c)|$. Hence, we find for any test function $\phi \in \testFunction$:
			\begin{align}
				\int \phi\,d\liftPoint(c_n) \rightarrow \int \phi\,d\liftPoint(c)
			\end{align}
			The measures $\liftPoint(c_n)$ as well as the measure $\liftPoint(c)$ have support limited to the union of $\region(c)$ and the aforementioned tube, which is bounded. Therefore convergence w.r.t.~test functions corresponds to the notion of narrow convergence \cite[Sect.~1.1]{Ambrosio2013} and we can also conclude that the second order moments of $\liftPoint(c_n)$ converge towards the second moments of $\liftPoint(c)$. Hence, by virtue of \cite[Thm.~2.7]{Ambrosio2013} we have $\WD\big(\liftPoint(c_n),\liftPoint(c)\big) \rightarrow 0$.
		\end{proof}
		
		\begin{proposition}
			$\liftPointEquiv^{-1}$ is not continuous.
		\end{proposition}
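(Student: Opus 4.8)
The plan is to produce a single shape measure at which $\liftPointEquiv^{-1}$ is discontinuous, by exhibiting shape measures $\mu_n$ converging to $\mu$ in the Wasserstein metric whose preimages $[c_n]=\liftPointEquiv^{-1}(\mu_n)$ (which exist by \thref{thm:LiftPointInverse}) do \emph{not} converge in $\sfold$. The phenomenon being exploited is that a high-frequency, small-amplitude ripple on a contour moves almost no mass (hence is Wasserstein-small) while it changes the curvature by an $\mathcal{O}(1)$ amount (hence is far away in the $C^\infty$-topology underlying $\sfold$). Concretely I would take $\mu$ to be the shape measure of the open unit disk, with $c\in\emb$ the standard parametrization of the unit circle, and define $\region_n$ by the polar radius function $r_n(\theta)=1+n^{-2}\cos(n\theta)$, letting $c_n(\theta)=r_n(\theta)(\cos\theta,\sin\theta)$ parametrize $\partial\region_n$. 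A quick check shows $c_n\in\emb$: it is an immersion since $|c_n'|^2=r_n^2+(r_n')^2>0$, and it is injective because $\region_n$ is a radial graph with smooth strictly positive radius; hence $\mu_n:=\liftPoint(c_n)\in\SMeas$.

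First I would show $\WD(\mu_n,\mu)\to 0$. All the sets $\region_n$ and $\region$ lie in the fixed disk of radius $2$, and $\region_n$ and $\region$ differ only inside the annular strip $\{\rho(\cos\theta,\sin\theta):|\rho-1|\le n^{-2}\}$, whose area is $\mathcal{O}(n^{-2})$; therefore $|\region_n|\to|\region|$ and $\int_{\region_n}\phi\,dx\to\int_\region\phi\,dx$ for every $\phi\in\testFunction$, so $\int\phi\,d\mu_n\to\int\phi\,d\mu$. Exactly as in the proof of \thref{thm:ContinuityLiftPointEquiv}, the uniformly bounded supports upgrade this narrow convergence, together with convergence of second moments, to Wasserstein convergence via \cite[Thm.~2.7]{Ambrosio2013}.

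Then I would argue $[c_n]\not\to[c]$ in $\sfold$. Suppose not; by \thref{thm:ConvergenceOnSFold} there would be representatives $c_n'\in[c_n]$ and $c'\in[c]$ with $c_n'\to c'$ in $\emb$, i.e.\ uniformly in all derivatives; in particular the signed curvatures $\kappa_{c_n'}$ would converge uniformly to $\kappa_{c'}$, and $|\kappa_{c'}|\equiv 1$ since $c'$ parametrizes the unit circle. On the other hand, the signed curvature is invariant under reparametrization up to a global sign, so $\sup_{S^1}|\kappa_{c_n'}|=\sup_{S^1}|\kappa_{c_n}|$; and a short computation with the polar-coordinate curvature formula, using $r_n'=-n^{-1}\sin(n\theta)$ and $r_n''=-\cos(n\theta)$, gives $\kappa_{c_n}(\theta)=1+\cos(n\theta)+E_n(\theta)$ with $\|E_n\|_{\infty}\to 0$, whence $\sup_{S^1}|\kappa_{c_n'}|\to 2$. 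This contradicts $\sup_{S^1}|\kappa_{c_n'}|\to\sup_{S^1}|\kappa_{c'}|=1$. Hence $\liftPointEquiv([c_n])=\mu_n\to\mu=\liftPointEquiv([c])$ while $[c_n]\not\to[c]$, so $\liftPointEquiv^{-1}$ is discontinuous at $\mu$.

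The embedding check, the area estimate, and the curvature computation are all routine. The one step that needs a little care is making the last argument genuinely reparametrization-proof: one must distinguish $c_n$ from $c$ by a quantity that is invariant under the \emph{full} group $\diff$ — including orientation-reversing diffeomorphisms — and continuous for the $C^\infty$-topology on $\emb$. The choice above, $\sup_{S^1}|\kappa|$ (equivalently the curvature oscillation $\max\kappa-\min\kappa$, or the bending energy $\int\kappa^2\,ds$), has both properties, and once such an invariant is fixed the contradiction is immediate; I expect this to be the only mild obstacle, everything else being elementary.
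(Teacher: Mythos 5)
Your proof is correct and takes essentially the same route as the paper: a high-frequency, small-amplitude radial ripple on the unit circle produces shape measures converging in $\WD$ whose contour classes do not converge in $\sfold$ (the paper uses $c_n(\theta)=\bigl(1+(\lambda/n)\sin(n\theta)\bigr)(\cos\theta,\sin\theta)$ and the same bounded-support narrow-convergence argument for the Wasserstein side). Your curvature argument for $[c_n]\not\to[c]$ is in fact more careful than the paper's, which merely asserts non-convergence in $\sfold$ from the failure of derivative convergence without exhibiting a reparametrization-invariant obstruction, so this refinement is a welcome addition rather than a deviation.
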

		
		\begin{proof}
			For sufficiently small $\lambda>0$, consider the sequence of contours $c_n$ and the contour $c$ in $\emb$ given by
			\begin{align}
				c_n(\theta) & = \big(1+ (\lambda/n) \sin(n \cdot \theta) \big)
					\begin{pmatrix}
						\cos(\theta) \\
						\sin(\theta)
					\end{pmatrix}\,, &
				c(\theta) & = 
					\begin{pmatrix}
						\cos(\theta) \\
						\sin(\theta)
					\end{pmatrix}
			\end{align}
			We have $c_n \rightarrow c$ uniformly, but not its derivatives. Hence, as in the reasoning for \thref{thm:ContinuityLiftPointEquiv}, we can conclude that $\liftPoint(c_n) \rightarrow \liftPoint(c)$ in the Wasserstein topology, but we have $c_n \not \rightarrow c$ and also $[c_n] \not \rightarrow [c]$ on $\sfold$. Hence, $\liftPointEquiv^{-1}$ is not continuous.
		\end{proof}
		
		We see from this example, that convergence in the optimal transport sense is only concerned with convergence of the regions towards each other, regardless of the boundary or even higher order regularity as required on $\emb$. For this reason additional assumptions on the regularity of $\alpha$ were necessary in \thref{thm:ShapeMeasurePaths} to be able to transform paths back and forth between contour and measure description.
		
		However, if we equip $\SMeas$ with the topology induced by $\liftPointEquiv$, then by definition $\liftPointEquiv$ and also $\liftPointEquiv^{-1}$ are continuous, thus constituting a homeomorphism between the two sets. Then $\SMeas$ inherits the manifold property from $\sfold$. 
		Let $\psi_i, \psi_j$ be any two charts mapping overlapping open sets $U_i, U_j \subset \sfold$ into the modelling space. Then
		$\psi_i \circ \liftPointEquiv^{-1}$ will be a chart on $\SMeas$. The corresponding chart change $(\psi_i \circ \liftPointEquiv^{-1}) \circ (\psi_j \circ \liftPointEquiv^{-1})^{-1}$ remains differentiable as the lifting onto $\SMeas$ cancels.
		Likewise, the map
		\[(\psi_j \circ \liftPointEquiv) \circ \liftPointEquiv^{-1} \circ \psi_i^{-1} = \id\,,\]
		that takes the modelling space of a chart on $\sfold$ onto the modelling space of a corresponding chart on $\SMeas$ is trivial and thus differentiable. Hence the two manifolds are actually diffeomorphic.

		By virtue of \thref{thm:LiftTangentInverse} we can represent the tangent space on $\SMeas$ at $\mu$ by $\STan(\mu)$. And due to \thref{thm:LiftingCommutation} we have that such tangent vectors naturally represent directional derivatives of functions on $\SMeas$ that are given by region integrals over test functions. Evaluation is given by the continuity equation \eqref{eq:ContinuityEq}.
		
		The diffeomorphism between the contour manifold $\sfold$ and the set of shape measures establishes that the shape measure representation is a formally equivalent way of describing shapes.
		The tangent space $\STan(\mu)$ gives a linear structure to describe deformations that is just as powerful as $T_{[c]} \sfold$ in terms of shape analysis and modelling.
		In addition, every shape is uniquely represented in the shape measure description, whereas one has to handle parametrization ambiguities in the contour representation.
		Shape measures are therefore an elegant way for shape representation in image segmentation tasks.
\section{A Riemannian Metric on the Manifold of Shape Measures}	
	\label{sec:RiemannianMetric}
	\subsection{Metric Structure of the Tangent Space}
		\label{sec:MetricTangent}
		In Sect.~\ref{sec:BackgroundWasserstein} we have discussed the analogy of $\Meas$, metrized by $\WD$, to a Riemannian manifold with metric tensor \eqref{eq:WassersteinRiemannInnerProduct}.
		In the last Section we have formally established, that the set of shape measures $\SMeas$ can be viewed as an infinite dimensional manifold, diffeomorphic to $\sfold$, albeit with a topology which is not compatible with the metric topology induced by $\WD$.
		Nevertheless, since $\SMeas \subset \Meas$ and since the tangent space w.r.t.~$\SMeas$, Definition \ref{def:ShapeTangentSpace}, is a subset of the tangent space w.r.t.~$\Meas$, Definition \ref{def:TangentSpace}, $\STan(\mu) \subset \Tan(\mu)$ for $\mu \in \SMeas$, it suggests itself to informally view the shape measures as a submanifold of all measures and to equip $\SMeas$ with the Riemannian metric that is induced by `restricting' the metric tensor on $\Meas$ to the `submanifold'.
		A prominent example of how such treatments can yield valuable insights, is the Otto calculus and its success in the context of interpreting partial differential equations as gradient flows (see for example \cite[Chap.~15]{Villani-OptimalTransport-09}).

		This will yield a new type of metric on the contour manifold $\sfold$, as opposed to contour-based metrics, for example discussed in \cite{Michor2006,MichorHamiltonianCurves,sunmensoa10}.
		Formally one can find an expression for the new metric inner product by pull-back through $\liftPoint$. One would find a non-local integral involving the kernel for the PDE \eqref{subeq:TangentLifting}. In this article we study the new metric directly in the measure representation where the inner product is local.
		
		We start by analyzing the metric structure on $\STan(\mu)$.
		First note that the equivalence classes of tangent vectors, induced by the pseudo-metric \eqref{eq:WassersteinRiemannInnerProduct} (two vectors being equivalent if they have zero distance), are just those described in Remark \ref{rem:STanIdentification}.
		
		We now consider various subspaces of $\STan(\mu)$.
		\paragraph{Translation.}
			Let $\mu$ be some shape measure and $\alpha = v$ be a flow field that is constant in space for some $v \in \R^2$.
			Such fields span a two dimensional subspace of $\STan(\mu)$.
			Then $\mu_t = (\id + t \cdot \alpha)_\sharp \mu$ is for every $t$ just the translation of $\mu$ by the vector $t\cdot v$. 
			One finds for any test function $\phi \in \testFunction$
			\begin{align}
				\frac{d}{dt} \left. \int \phi\,d\mu_t \right|_{t=0} & = \frac{d}{dt} \left. \int \phi\,d(\id + t\cdot \alpha)_\sharp \mu \right|_{t=0} \nonumber \\
					& = \frac{d}{dt} \left. \int \phi \big(x + t \cdot \alpha(x) \big)\,d\mu(x) \right|_{t=0} = \int \langle \nabla \phi, \alpha \rangle d\mu\,,
			\end{align}
			i.e. $(\mu_t,\alpha_t=\alpha)$ satisfy the continuity equation.

			Strictly, for $\alpha_t = \alpha$ to be within $\STan(\mu_t)$ at any time, we need to smoothly truncate it, such that its support is compact. We will assume that such a truncation has been applied, but at such a large radius that at all times $t \in [0,1]$ we have $\alpha = v$ on the support of $\mu_t$.
			
			Let now $\alpha \in \STan(\mu)$ be a flow-field that is orthogonal to any translation flow field w.r.t.~the Riemannian inner product. That is
			\begin{align*}
				0 & = \int \langle \alpha(x), v \rangle_{\R^2} d\mu \qquad \text{for all } v \in \R^2\,. \\
				\intertext{We then find}
				0 & = \left \langle \int \alpha(x)\,d\mu, v \right \rangle_{\R^2}\, \qquad \text{for all $v \in \R^2$, and thus} \qquad 0 = \int \alpha(x)\,d\mu\,.
				\intertext{From this follows after a brief calculation}
				0 & = \frac{d}{dt} \left. \int x\,d(\id + t\,\alpha)_\sharp \mu(x) \right|_{t=0}
			\end{align*}
			where we need to smoothly truncate the function $x \mapsto x$ beyond the support of $\mu$ to turn it into a test function.
			We then see that any tangent vector that is locally orthogonal to any translation field keeps the center of mass of $\mu$ unchanged.
			
		\paragraph{Scale.}
			Assume now, we fix some tangent vector $\alpha_{\textnormal{scale}} \in \STan(\mu)$ with $\ddiv \alpha_{\textnormal{scale}} = 1$ in $\spt \mu$, that is orthogonal to the translation fields. We refer to $\alpha_{\textnormal{scale}}$ as \emph{scale component}. Then we can uniquely decompose any given tangent vector $\alpha$ into the following components:
			\begin{align}
				\label{eq:STanDecomposition}
				\alpha = \alpha_{\textnormal{trans}} + \lambda \cdot \alpha_{\textnormal{scale}} + \alpha_{\textnormal{def}}
			\end{align}
			where $\alpha_{\textnormal{trans}}$ is a \emph{translation component} as discussed above, $\lambda$ is given by $\ddiv \alpha$ and $\alpha_{\textnormal{def}}$ is a divergence-free residual, orthogonal to the translation component, we will refer to as \emph{deformation component}.
			We now discuss, how a scale component can be determined which is orthogonal to all divergence-free flow fields, this includes the translation component and the residual $\alpha_{\textnormal{def}}$, such that the decomposition \eqref{eq:STanDecomposition} is an orthogonal one.
			Demand for any $\alpha$ with $\ddiv \alpha=0$ that
			\begin{align}
				\label{eq:ScaleComponentAssumption}
				0 & = \int \langle \alpha_{\textnormal{scale}}, \alpha \rangle d\mu = \int_{\region} \langle \alpha_{\textnormal{scale}}, \alpha \rangle dx
				\intertext{where $\region = \spt(\mu)$ and we can neglect the normalization factor $|\region|^{-1}$, since the integral vanishes. We then take $\alpha_{\textnormal{scale}} = \nabla u_{\textnormal{scale}}$, $u_{\textnormal{scale}} \in \testFunction$, and find}
				\nonumber
				0 & = \int_\region \langle \nabla u_{\textnormal{scale}}, \alpha \rangle dx = \int_\region \nabla (u_{\textnormal{scale}} \cdot \alpha) dx \\
				\intertext{where the second equality holds since $\ddiv \alpha = 0$. Then by the divergence theorem}
				\label{eq:ScaleComponentCondition}
				0 & = \int_{\partial \region} u_{\textnormal{scale}} \langle n,\alpha \rangle_{\R^2} ds
			\end{align}
			where $n$ is the outward pointing unit-normal on $\partial \region$.
			
			If $u_{\textnormal{scale}}$ were non-constant on $\partial \region$, one could locate a region where $u_{\textnormal{scale}}$ is above average (w.r.t.~the boundary length as weight) and one, where $u_{\textnormal{scale}}$ is below average. One could then choose some smooth normal components for a field $\alpha$, $\langle n,\alpha \rangle_{\R^2}$, that have some influx in the above-average region and a corresponding outward flux in the below average region with zero net flux. For this normal component \eqref{eq:ScaleComponentCondition} would be non-zero. This normal component could then be lifted to a complete divergence-free flow-field $\alpha$ by virtue of Definition \ref{def:TangentLifting}, yielding a contradiction to assumption \eqref{eq:ScaleComponentAssumption}. Thus we can conclude that $u_{\textnormal{scale}}$ must be constant on $\partial \region$. We choose to set $u_{\textnormal{scale}} = 0$ on $\partial \region$.
			
			To obtain a valid $u_{\textnormal{scale}}$ throughout $\region$, one can solve the following Dirichlet problem:
			\begin{subequations}
				\label{eq:ScaleComponentDirichlet}
				\begin{align}
					\Delta\,u_{\textnormal{scale}} & = 1 & & \text{in } \region \\
					u_{\textnormal{scale}} & = 0 & & \text{on } \partial \region
				\end{align}
			\end{subequations}
			In analogy to \thref{thm:Neumann} this problem has a unique solution in $C^\infty(\ol{\region})$ and thus (after extension onto $\testFunction$) induces a unique scale component $\alpha_{\textnormal{scale}} = \nabla u_{\textnormal{scale}}$ which is orthogonal to all divergence-free modes. The effect on shapes when moving along the scale-component on the manifold of shape measures is illustrated in Fig. \ref{fig:ScaleMode}.
			
			\begin{figure}[hbt]
				\centering
				\includegraphics[width=7cm]{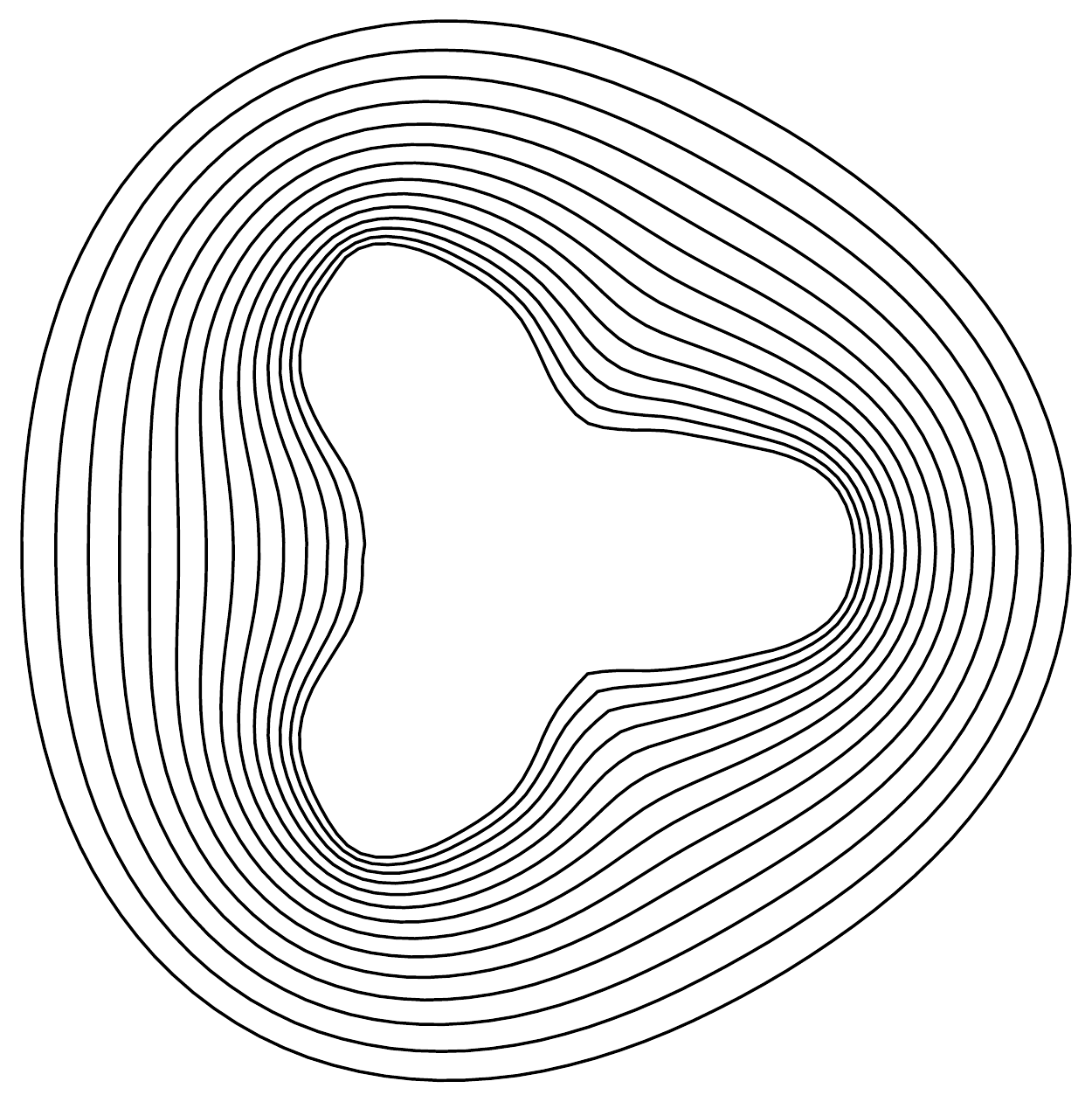}
				\caption{Moving along the scale-component: the contours correspond to different shape measures along a path in $\SMeas$ that is locally tangent to the scale-component (Sect.~\ref{sec:MetricTangent}). As one moves to larger scales, small details are increasingly smoothened. Towards smaller scales the become more emphasized.}
				\label{fig:ScaleMode}
			\end{figure}
	
	\subsection{Geodesic Equation on Shape Measures}
		We have recalled in Sect.~\ref{sec:BackgroundWassersteinManifold} some results from \cite{LottWassersteinRiemannian2008} about the manifold $\Meas^\infty$ of absolutely continuous measures with smooth density functions. Points in $\SMeas$ and $\Meas$ can be approximated to arbitrary precision by points in $\Meas^\infty$ as measured by $\WD$.
		Thus, encouraged also by Remark \ref{rem:GeodesicGenerality}, in this section we will pretend, expressions \eqref{eq:WassersteinCovariantDerivative} and \eqref{eq:WassersteinCovariantDerivativeDT} were also valid for sufficiently smooth tangent vectors on $\Meas$.
		From these we will want to find equivalent expressions on $\SMeas$. \emph{We emphasize that this is not a rigorously justified analysis.}
		It is yet a worthwhile excursion as one might gain some intuition on the new metric structure of the space of shape measures, which, as shown, is a new metric on the contour manifold $\sfold$.
		
		\paragraph{Geodesics on $\Meas$.}		
			Let us first have a look at geodesics on $\Meas$. For an initial measure $\mu \in \Meas$ and an initial tangent vector $\alpha \in \Tan(\mu)$, the solution to the geodesic equation for regular optimal transport, \eqref{eq:WassersteinGeodesicEquation}, is given by
			\begin{align}
				\label{eq:TangentTrajectory}
				\mu_t & = (\id + t \cdot \alpha)_\sharp \mu\,.
			\end{align}
			That is, every infinitesimal `mass particle' in $\mu_t$ moves along a straight line, direction and velocity determined by $\alpha$ at $t=0$. Once this flow-field has been chosen, no interaction between `mass particles' is necessary, which is why the corresponding geodesic equation \eqref{eq:WassersteinGeodesicEquation} is local in $u_t$.
		
			Let now $\mu \in \SMeas$ and $\alpha \in \STan(\mu)$. Then from the discussion around \eqref{eq:FirstOrderJacobian} we know that to first order $\mu_t$ as in \eqref{eq:TangentTrajectory} has homogeneous density within its support.
			However, let us check the geodesic equation \eqref{eq:WassersteinGeodesicEquation} for the potential function $u_t$ of $\alpha_t$, where $\alpha_t$ is the temporal evolution of $\alpha_0 = \alpha$ along the geodesic. Applying the Laplacian to both sides (assuming for now sufficient regularity), we find at $t=0$
			\begin{align}
				\label{eq:GeodesicHessian}
				\partial_t \left. \Delta u_t \right|_{t=0}= \left. -\frac{1}{2} \Delta \|\nabla u_t\|^2 \right|_{t=0} = \sum_{i,j=1}^2 (\partial_i \partial_j u)^2
			\end{align}
			which is the Frobenius norm of the Hessian of $u$. So, if the Hessian is not spatially constant, we find that $\mu_t$ will leave the subset $\SMeas$. Hence, for geodesics on $\SMeas$, `mass particles' will not always be allowed to simply move along straight lines. They will need to make sure, that their joint density remains spatially constant. Hence, there is need for another equation of evolution, which we will now informally try to motivate.
				
		\paragraph{Projection.}
			Recall the following result from differential geometry in finite dimensions: Let $M,N$ be Riemannian manifolds, let $N$ be a submanifold of $M$. Let $x \in N \subset M$ and let $a \in T_x N \subset T_x M$ and let $b$ be a vector field on $M$ with $b(x') \in T_{x'} N$ for all $x' \in N$. Then $b$ can be turned into a vector field on $N$ by restriction. Denote by $\nabla_M\big(b,(x,a)\big)$ the covariant derivative of $b$ at point $x$ w.r.t.~direction $a$, and likewise for other parameters. Then
			\begin{equation}
				\label{eq:CovariantDerivativeProjection}
				\nabla_N\big(b,(x,a)\big) = \Proj_{T_xN} \Big( \nabla_M\big(b,(x,a)\big) \Big)
			\end{equation}
			where projection is w.r.t.~the Riemannian inner product.
			
			Next, let us find the projection map $\Proj_{\STan(\mu)}$.
			For a given shape measure $\mu \in \SMeas$, let $u \in \testFunction$, so $\nabla u \in \Tan(\mu)$. Our goal is now to find $\hat{u} \in \testFunction$ such that $\nabla \hat{u} = \Proj_{\STan(\mu)}(\nabla u)$. In that case $\nabla (u - \hat{u})$ is orthogonal to any vector in $\STan(\mu)$. Let $u_{\perp}$ be the unique solution to the following Dirichlet problem:
			\begin{subequations}
				\label{eq:ProjectionComponentDirichlet}
				\begin{align}
					\Delta\,u_{\perp} & = \Delta u & & \text{in } \region \\
					u_\perp & = 0 & & \text{on } \partial \region
				\end{align}
			\end{subequations}
			Again, we find $u_\perp \in C^\infty(\ol{\region})$ and can suitably extend to $\testFunction$. Recall the discussion on the scale component in Sect.~\ref{sec:MetricTangent} to find that $\nabla u_\perp$ is perpendicular to any divergence-free vector in $\STan(\mu)$ w.r.t.~the inner product \eqref{eq:WassersteinRiemannInnerProduct}.
			Further, the vector $\nabla (u - u_\perp)$ lies in $\STan(\mu)$. Thus, all that remains to be done is, to orthogonalize w.r.t.~the scale component $\nabla u_{\textnormal{scale}}$ as introduced in Sect.~\ref{sec:MetricTangent}, which spans the only direction of $\STan(\mu)$ which has non-zero divergence.
			Thus, begin with the ansatz
			\begin{align}
				\hat{u} & = u - u_\perp + \lambda \cdot u_{\textnormal{scale}}
			\end{align}
			and determine $\lambda$ such that $\nabla(u - \hat{u}) \perp \nabla u_{\textnormal{scale}}$ w.r.t.~\eqref{eq:WassersteinRiemannInnerProduct}.
	
		\paragraph{Geodesic Equation.}
			Now we put together the pieces: Combining \eqref{eq:WassersteinCovariantDerivativePotential} and \eqref{eq:CovariantDerivativeProjection} to obtain the covariant derivative of $u_t$ in $\SMeas$ along itself, and setting this to zero, we find:
			\begin{align}
				0 & = \Proj_{\STan(\mu_t)} \left( \nabla \left( \frac{1}{2} \|\nabla u_t\|^2 + \partial_t u_t \right) \right) \\
				\intertext{Since the projection is linear, we can separately apply it to the $\|\nabla u_t\|^2$ and to the $\partial_t u_t$ terms. Further, since $\nabla u_t \in \STan(\mu_t)$, we have that $\nabla \partial_t u_t \in \STan(\mu_t)$ since also the divergence of the temporal derivative must be spatially constant. Hence, the projection of the second term is redundant and we can write:}
				\label{eq:GeodesicEquation}
				0 & = \Proj_{\STan(\mu_t)} \left( \nabla \frac{1}{2} \|\nabla u_t\|^2 \right) + \nabla \partial_t u_t
			\end{align}
			Since $\Proj_{\STan(\mu)}$ is a non-local operation, the new geodesic equation is non-local, in contrast to \eqref{eq:WassersteinGeodesicEquation}. This non-locality is necessary to keep the density of $\mu_t$ spatially constant along the path.
			
	\subsection{Geodesics on the Manifold of Shape Measures}
		\label{sec:Geodesics}
		We now discuss some particular solutions to \eqref{eq:GeodesicEquation}.
		Let $\mu \in \SMeas$ be some shape measure and the initial tangent vector $\alpha_{0} = \alpha_{\textnormal{trans}} = v$ be a spatially constant translation mode, as discussed in Sect.~\ref{sec:MetricTangent}.
		The geodesic in $\Meas$ through $\mu$, tangent to $\alpha_0$ is given by $\mu_t = (\id + t\cdot v)_\sharp \mu$. This is the translation of $\mu$ by the vector $t \cdot v$. Obviously this is a path in $\SMeas$. Since $\SMeas \subset \Meas$, it must therefore also be a geodesic in $\SMeas$.
		
		This can be verified explicitly: we have  $\alpha_{0} = \nabla u_{0}$ for $u_{0} = \langle x, v \rangle_{\R^2}$ and consequently find $\nabla \|\nabla u_{0}\|^2 = 0 \in \STan(\mu)$. Hence, the projection will change nothing and we find $\partial_t \alpha_{t}|_{t=0} = 0$. One can thus see that $\alpha_t = \alpha_0$ is in fact a solution to \eqref{eq:GeodesicEquation}.
		
		Consider further the initial tangent vector $\alpha_0(x) = x$. This corresponds to resizing the original shape. A possible potential function is given by $u(x) = \|x\|^2/2$. One can check that the induced optimal transport geodesic $\mu_t = (\id + t \cdot \alpha_0)_\sharp \mu$ lies within $\SMeas$, hence by the same reasoning as with the translations, it must therefore also be a geodesic on the shape measures.

		A numerical solution to the geodesic equation where the projection is important is illustrated in Fig.~\ref{fig:Geodesic}.
				
		\begin{figure}[bt]
			\centering
			\newlength{\figwidth}%
			\setlength{\figwidth}{2.5cm}%
			\setlength{\figwidthB}{0.8\figwidth}%
			\begin{tikzpicture}[img/.style={anchor=center, inner sep=0}]
				\node[img] at (0,0) {\includegraphics[width=\figwidth]{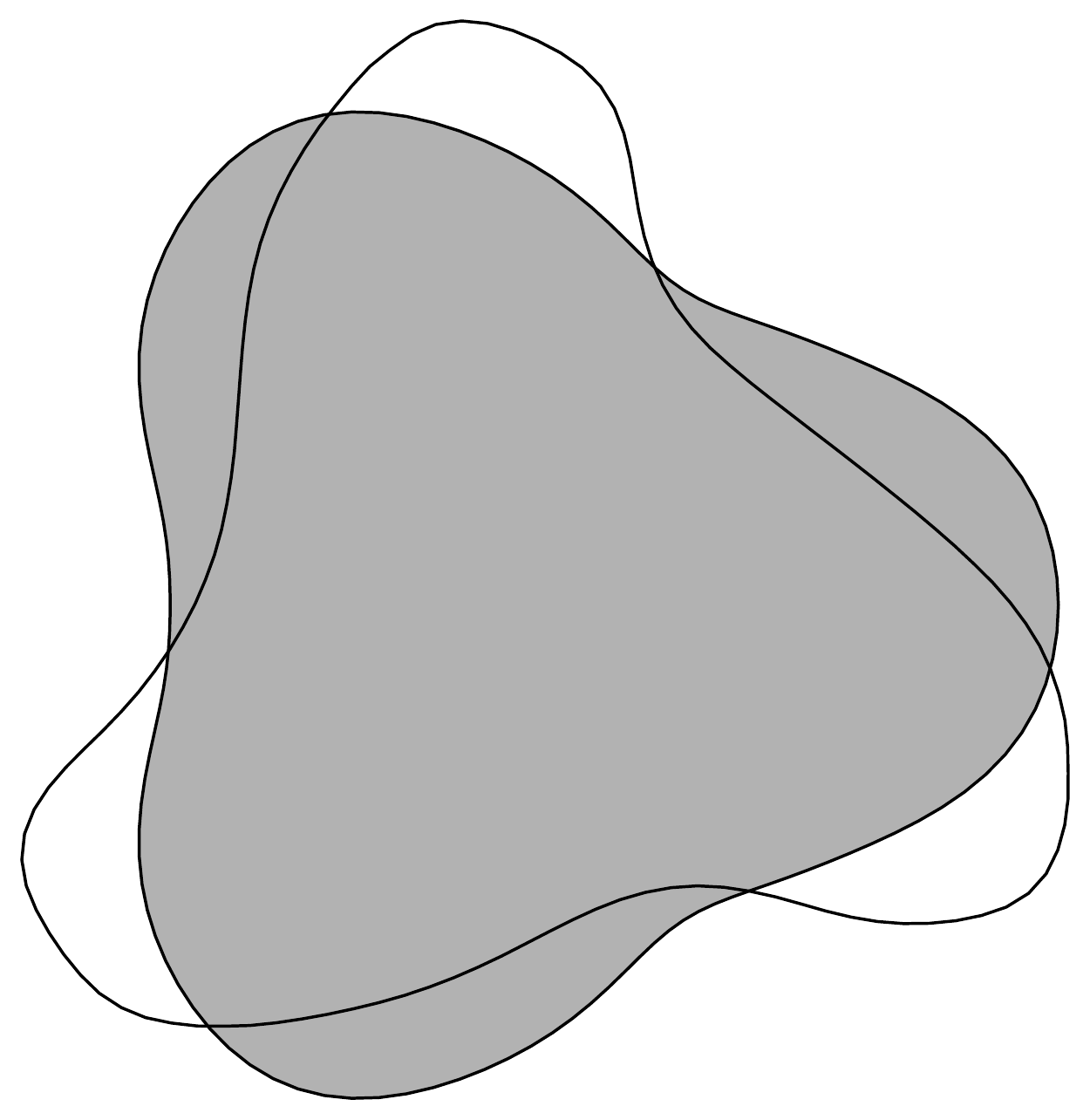}};
				\node[img] at (1.2\figwidth,0) {\includegraphics[width=\figwidth]{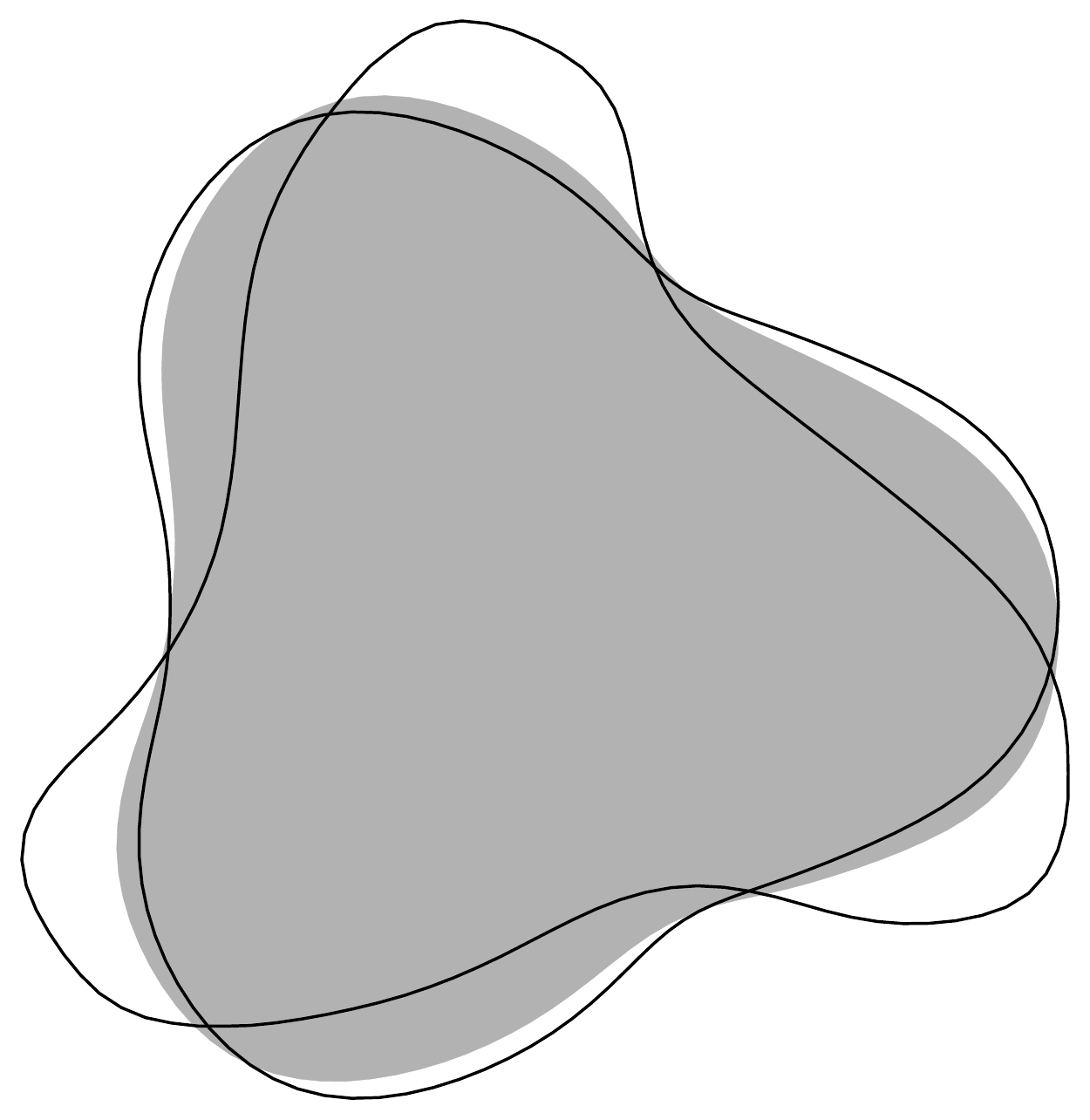}};
				\node[img] at (2.4\figwidth,0) {\includegraphics[width=\figwidth]{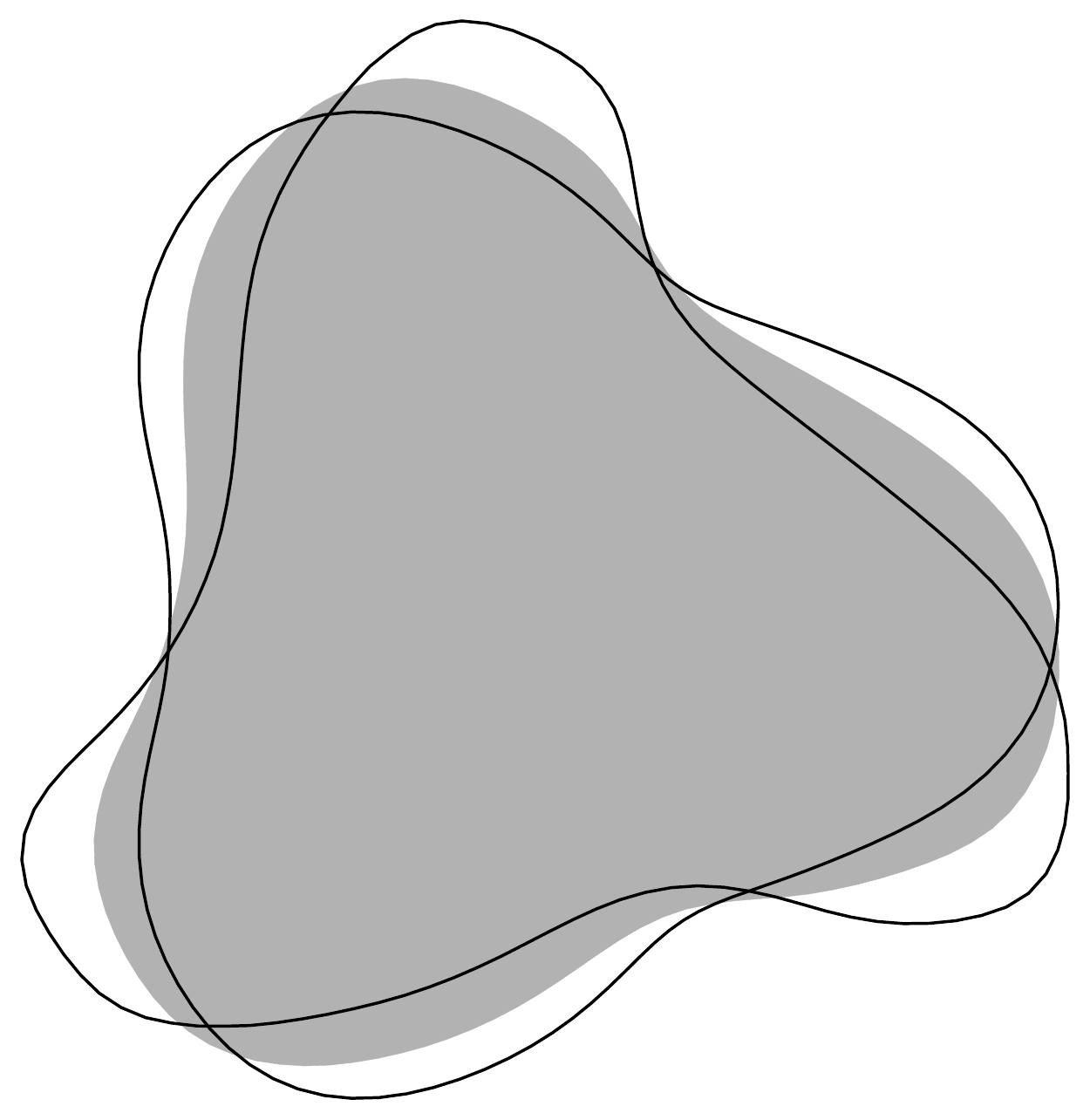}};
				\node[img] at (0,-1.2\figwidth) {\includegraphics[width=\figwidth]{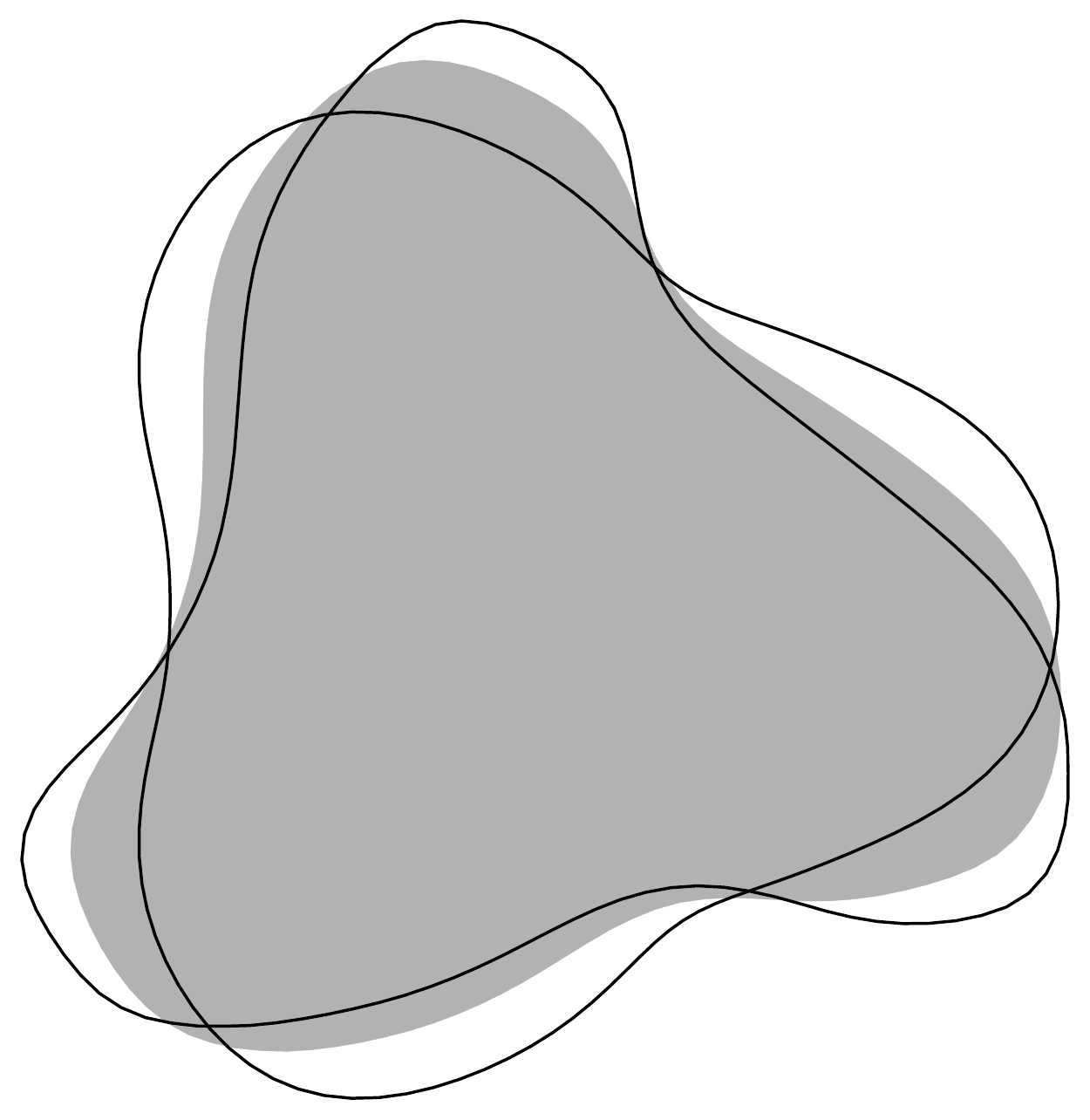}};
				\node[img] at (1.2\figwidth,-1.2\figwidth) {\includegraphics[width=\figwidth]{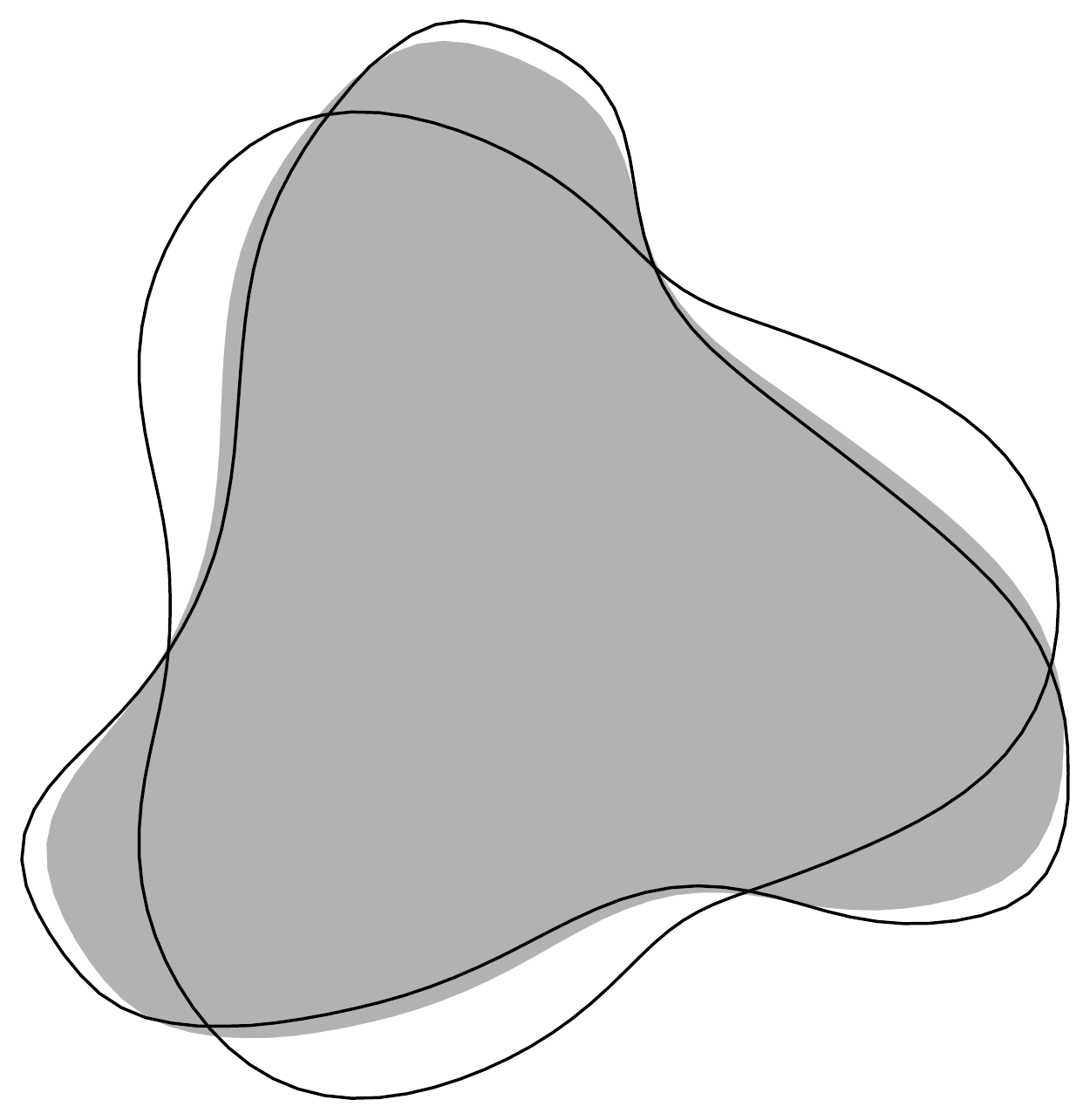}};
				\node[img] at (2.4\figwidth,-1.2\figwidth) {\includegraphics[width=\figwidth]{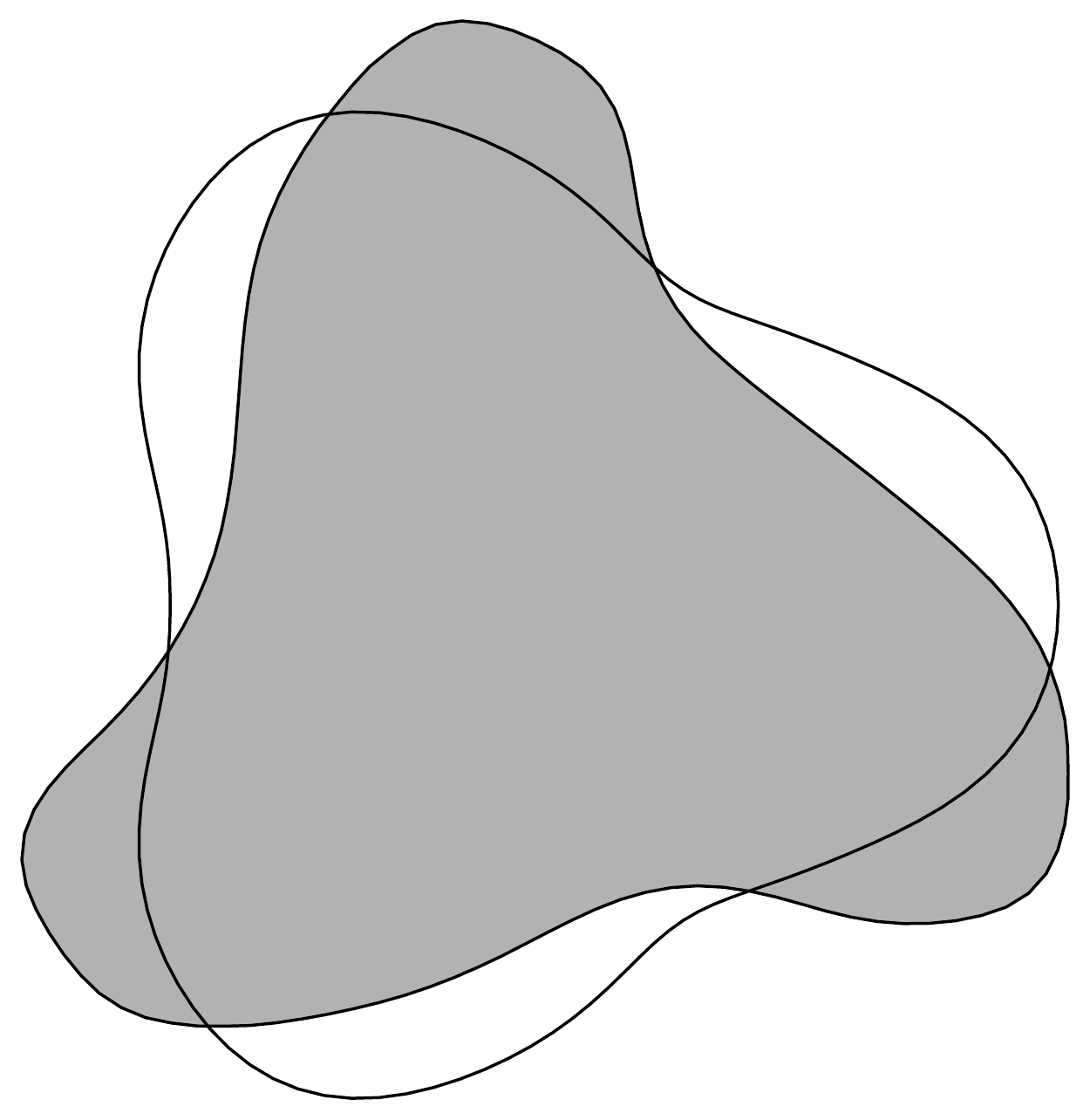}};
				\node at (1.2\figwidth,-2.0\figwidth) {(a)};
				
				\node[img] at (10cm,-0.6\figwidth) {\includegraphics[width=2\figwidth]{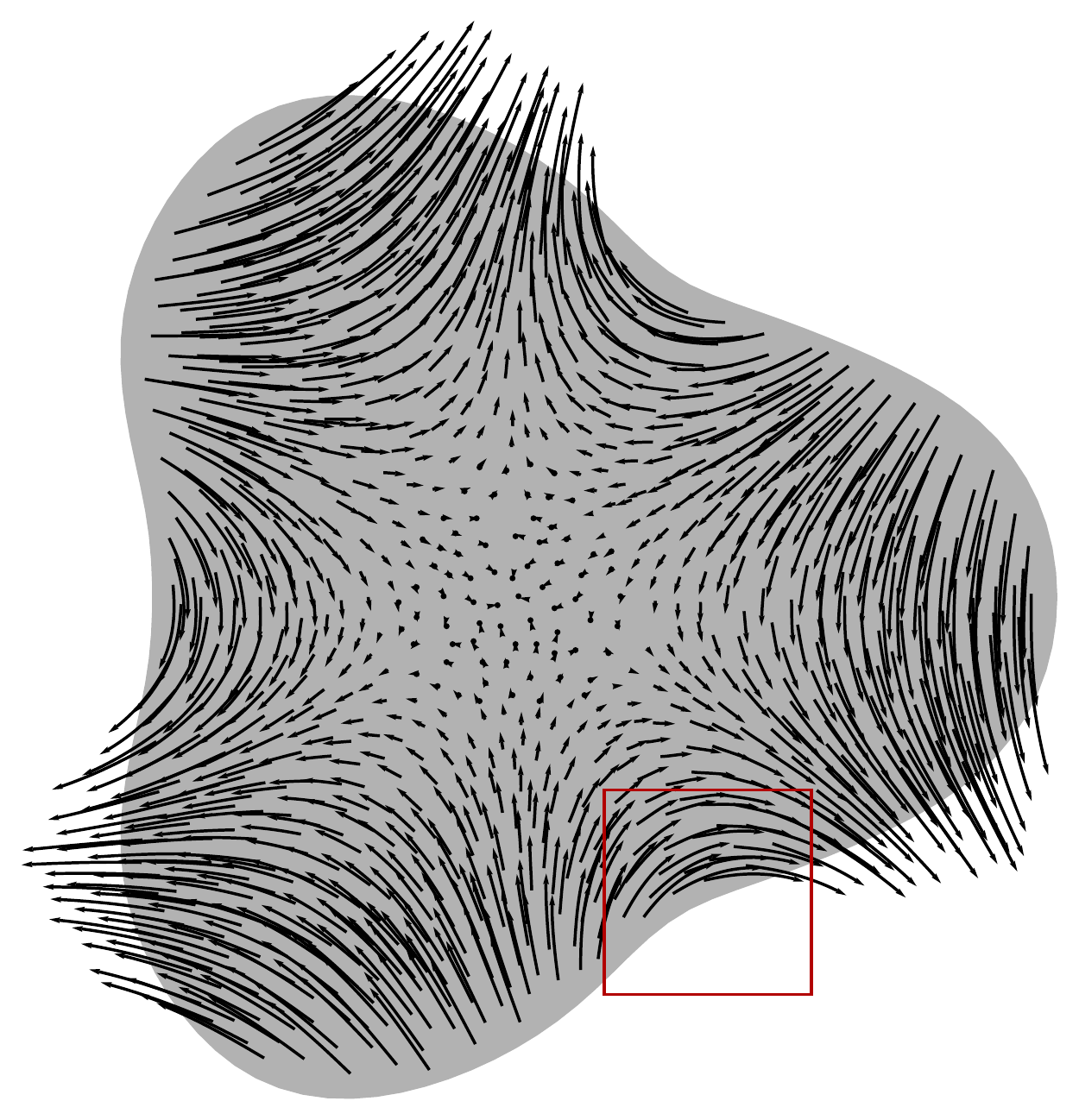}};
				\node[img] at (10cm+\figwidth-0.2\figwidthB,-0.6\figwidth+\figwidth-0.3\figwidthB) {\includegraphics[width=\figwidthB]{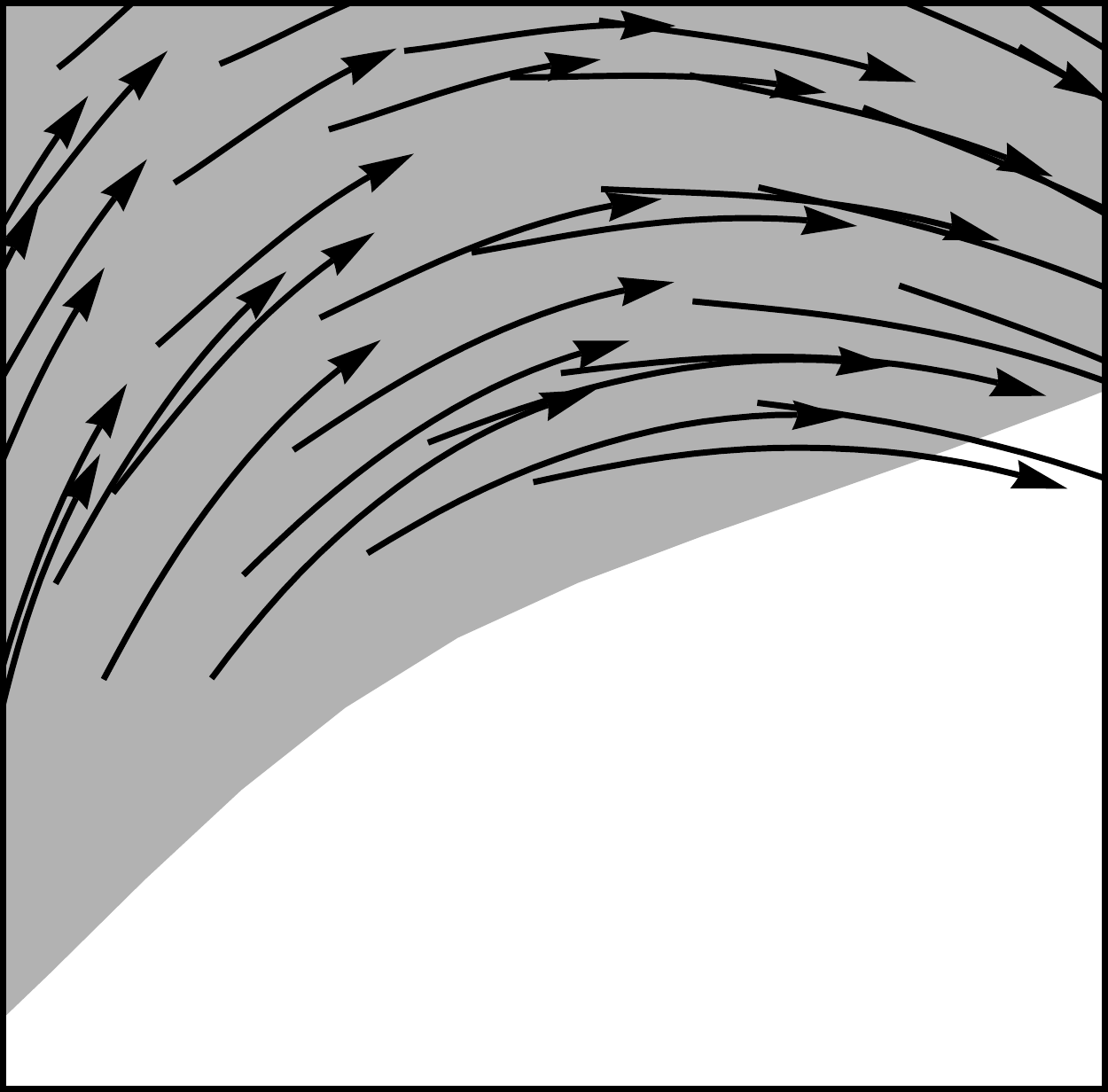}};
				\node at (10cm,-2.0\figwidth) {(b)};
			\end{tikzpicture}
			\caption{Numerically computed geodesic on $\SMeas$.
			(a) Top left to bottom right: geodesic between two shape measures (initial and final shape indicated by contours for orientation).
			Unlike the natural linear structure on measures (cf.~Fig.~\ref{fig:LinearStructures}) this gives a meaningful interpolation between the two shapes.
			(b) Trajectories of `mass particles' in the measure (with close-up). Unlike in conventional optimal transport, particles do not all travel on straight lines. This is necessary such that the intermediate measure will always represent a shape.}
			\label{fig:Geodesic}
		\end{figure}
	
\section{Conclusion and Outlook}
	\label{sec:Conclusion}
	Shape measures, based on the pseudo-Riemannian structure of the Wasserstein space of measures, as a shape representation have been introduced to unite the complimentary strengths of representing shapes by indicator functions and parametrized contours \cite{SchmitzerSchnoerr-EMMCVPR2013}.
	
	In this article we have defined shape measures in a formally precise way and studied the mathematical relation to the manifold of $S^1$-like contours in $\R^2$.
	Bijections, acting as conversions, between the two representations and their differential structures have been introduced and the equivalence of suitable regularity classes of paths under these bijections has been established.
	Eventually it was shown that the set of shape measures is formally a manifold which is diffeomorphic to the manifold of contours.
	
	We have then equipped this manifold with the metric induced by optimal transport. This yields a new metric on the manifold of contours which has not been studied so far. We have examined the local structure in the tangent space and discussed the corresponding geodesic equation.
	
	These results prove that shape measures are in fact a representation that is equivalent in a mathematically precise way to the representation by parametrized contours. Through absence of reparametrization ambiguities and the closeness to convex variational methods via the Kantorovich formulation of optimal transport shape measures appear to be more suitable for tasks such as object segmentation.
	
	The rigorous study of the geodesic equation on the manifold of shape measures and the corresponding logarithmic map are yet an open problem on the theoretical side.
	On the practical side it will be interesting to see how shape measures can be applied to other problems, such as object tracking, or be combined with more complex appearance and shape models for image segmentation as done in \cite{SchmitzerSchnoerr-EMMCVPR2013}.

\bibliography{contour-manifolds}{}
\bibliographystyle{plain}

\end{document}